\documentclass{article}
\usepackage{harvard}
\usepackage{graphicx}
\usepackage{camnum}
\usepackage{amsmath}
\usepackage{enumerate}
\usepackage{algorithm}
\usepackage{algpseudocode}
\usepackage{tikz,pgfplots}
\usepackage{tikz-cd}

\allowdisplaybreaks

\newcommand{\ee}{\mathrm{e}}
\newcommand{\ii}{{\mathrm i}}

%\documentclass[a4paper,10pt]{article}
%\usepackage{amsmath}
%\usepackage{amsfonts}
%\usepackage{amssymb}
%\usepackage{amsthm}
%\usepackage{enumerate}
%\usepackage{bbm}
%\usepackage{hyperref}
%\usepackage{color}
%\usepackage{graphicx}
%\usepackage{algorithm}
%\usepackage[noend]{algpseudocode}
%\usepackage[section]{placeins}
%\graphicspath{ {images/} }

%\usepackage{a4wide}

%\newtheorem{theorem}{Theorem}[section]
%\newtheorem{proposition}[theorem]{Proposition}
%\newtheorem{lemma}[theorem]{Lemma}
%\newtheorem{corollary}[theorem]{Corollary}
%\theoremstyle{definition}

%\theoremstyle{remark}
\newtheorem{remark}[theorem]{Remark}

%\newcommand{\R}{\mathbb{R}}
%\newcommand{\C}{\mathbb{C}}
%\newcommand{\N}{\mathbb{N}}
%\newcommand{\Z}{\mathbb{Z}}
%\newcommand{\ii}{\mathrm{i}}
%\newcommand{\ee}{\mathrm{e}}
%\newcommand{\PP}{\mathrm{P}}
%\newcommand{\LL}{\mathrm{L}}

%\newcommand{\dd}{\D }

%opening
\title{Sobolev-Orthogonal Systems with Tridiagonal Skew-Hermitian Differentiation Matrices}
\author{Arieh Iserles\footnote{\texttt{ai@damtp.cam.ac.uk}, Department of Applied Mathematics and Theoretical Physics, University of Cambridge, Wilberforce Road, Cambridge CB3 0WA, United Kingdom.} \and Marcus Webb\footnote{Corresponding author, \texttt{marcus.webb@manchester.ac.uk}, Department of Mathematics, University of Manchester, Alan Turing Building, Manchester M13 9PL, United Kingdom.}}

\begin{document}
\maketitle

\begin{abstract}
  We introduce and develop a theory of orthogonality with respect to Sobolev inner products on the real line for sequences of functions with a tridiagonal, skew-Hermitian differentiation matrix. While a theory of such $\CC{L}_2$-orthogonal systems is well established, Sobolev orthogonality requires new concepts and their analysis. We characterise such systems completely as appropriately weighed Fourier transforms of orthogonal polynomials and present a number of illustrative examples, inclusive of a Sobolev-orthogonal system whose leading $N$ coefficients can be computed in $\O{N\log N}$ operations.
\end{abstract}

\noindent {\bf Keywords} Orthogonal systems, Sobolev orthogonality, spectral methods, Malmquist--Takenaka functions\\
\noindent {\bf Mathematics Subject Classification} 42C05, 42C10, 42C30, 65M12, 65M70

\section{Introduction}

\subsection{Orthonormal systems on the real line}

The theory of $\CC{L}_2$-orthonormal systems on the real line with a tridiagonal differentiation matrix has been developed in \cite{iserles19oss,iserles20for,iserles21fco,Iserles21daf}. In its simplest (real) version, let $w\geq0$ be an absolutely continuous, nonzero weight function, symmetric with respect to the origin, and $\{p_n\}_{n\in\bb{Z}_+}$ the underlying system of orthonormal polynomials, which must satisfy
\begin{displaymath}
  b_n p_{n+1}(\xi)=\xi p_n(\xi)-b_{n-1}p_{n-1}(\xi),\qquad n\in\BB{Z}_+.
\end{displaymath}
for some real numbers $\{b_n\}_{n\in\bb{Z}_+}$. Setting 
\begin{equation}
  \label{eq:DefVarphi}
  \varphi_n(x)=\frac{\ii^n}{\sqrt{2\pi}} \int_{-\infty}^\infty p_n(\xi) \sqrt{w(\xi)} \ee^{\ii x\xi}\D\xi,\qquad x\in\BB{R},\quad n\in\BB{Z}_+,
\end{equation}
we obtain by Parseval's theorem an orthonormal system of functions in $\CC{L}_2(\BB{R})$. Moreover, under the mild assumption that polynomials are dense in $\CC{L}_2(\BB{R};w)$, this system is dense in $\CC{L}_2(\BB{R})$ if the support of $w$ is all of $\BB{R}$, otherwise its closure is the {\em Paley--Wiener space\/} $\mathcal{PW}_{\CC{supp}\,w}(\BB{R})$ of all $\CC{L}_2(\BB{R})$ functions whose Fourier transform is supported on $\CC{supp}\,w$. Moreover, $\Phi=\{\varphi_n\}_{n\in\bb{Z}_+}$ obeys
\begin{equation}
  \label{eq:TTR}
  \varphi_n'(x)=-b_{n-1} \varphi_{n-1}(x)+b_n\varphi_{n+1}(x),\qquad n\in\BB{Z}_+.
\end{equation}
In vector form, \R{eq:TTR} is $\MM{\varphi}'=\mathcal{D}\MM{\varphi}$, where $\mathcal{D}$ is the \emph{differentiation matrix} of the system, which in this case is tridiagonal and skew-symmetric. Skew symmetry and the tridiagonal form provide important advantages on the design of spectral methods with the basis $\Phi$ \cite{iserles19oss}. 

In this paper we generalise the theory to the case of Sobolev-orthogonal systems, where the Sobolev inner product is of the form
\begin{displaymath}
	\langle \varphi, \psi\rangle_v = \sum_{\ell=0}^\infty v_\ell \int_{-\infty}^\infty \varphi^{(\ell)}(x) \overline{\psi^{(\ell)}(x)} \,\D x,
\end{displaymath}
defined by the non-zero, non-negative sequence $\{ v_\ell\}_{\ell \in \bb{Z}_+} \subset [0,\infty)$. The $\CC{H}^s(\BB{R})$ norm, where $s \in \BB{Z}_+$ corresponds to $v_{\ell} = 1$ for $\ell = 0,1\ldots,s$ and $v_{\ell} = 0$ otherwise.

Besides the resulting theory being of interest in its own right, we can motivate our exploration in the context of spectral methods for PDEs using the example of the Ornstein--Uhlenbeck process,
\begin{equation}\label{eqn:OUPDE}
		\frac{\partial u}{\partial t}=\frac{\partial^2 u}{\partial x^2} - a \frac{\partial }{\partial x} \left(x u \right),\qquad x\in \BB{R},\quad t\geq0,
\end{equation}
with coefficient of friction described by the positive constant $a$ \cite{daprato14sei,lawler06isp}. Solutions to this PDE satisfy
\begin{equation}\label{eqn:OUdescent}
	\frac{\D}{\D t} \int_{-\infty}^\infty u_x^2(x) + u^2(x) \,\D x = - \int_{-\infty}^\infty 2 u_{xx}^2(x) + \left(2 + 3a \right) u_x^2(x) + a u^2(x) \, \D x,
\end{equation}
which shows that the solution decays monotonically to zero in the $\CC{H}^1(\BB{R})$ norm. In fact, we can drop some terms and show that $	\frac{\D}{\D t} \langle u, u \rangle_{H^1} \leq -a\langle u, u \rangle_{H^1}$, and hence the norm decreases at least exponentially with rate dependent on $a$.

Now, consider semi-discretising equation \eqref{eqn:OUPDE} in space by spectral method $u(x,t) \approx u_N(x,t) := \sum_{n=0}^N a_n(t)\varphi_n(x)$, where $\Phi = \{\varphi_n\}_{n\in\bb{Z}_+} \subset \CC{L}_2(\BB{R})$ are orthonormal with respect to the $\CC{H}^1(\BB{R})$ inner product. If a Galerkin scheme is used with respect to the $\CC{H}^1(\BB{R})$ inner product (i.e.~the residual of the PDE at each time $t$ is orthogonal to $\CC{span}\{\varphi_n\}_{n\in\bb{Z}_+}$), then the inequality \eqref{eqn:OUdescent} is also satisfied by $u_N$ (cf.~\cite[Ch.~8]{hesthaven2007spectral}). It therefore follows that any A-stable discretisation in time will be stable.

The plan of this paper is as follows. In Section~2, basing ourselves upon our earlier theory on $\CC{L}_2$ inner products, we present a complete framework for the construction of Sobolev-orthogonal systems on the real line with a tridiagonal differentiation matrix. This leads to two alternatives towards the construction of $\CC{H}^s(\BB{R})$-orthogonal systems, which are debated in Section~3: the first is the arguably more obvious approach, yet it leads to formul\ae{} which typically are impossible to express explicitly, while the second, less natural, results in a more constructive approach. Section~4 is concerned with systems based upon the familiar Hermite weight and Section~5 with bilateral (i.e., symmetrised with respect to the origin) Laguerre weights. In Section~6 we discuss Bessel-like orthogonal systems originating in various ultraspherical weights: in that case the closure of the orthogonal system is not $\CC{H}^s(\BB{R})$ but a relevant Paley--Wiener space. Section~7 generalises the discourse to non-symmetric measures. In that instance our orthogonal systems are complex-valued but the approach confers some important advantages. In particular, it allows us to generalise the Malmquist--Takenaka system to Sobolev setting while retaining the most welcome feature of this system, namely that the coefficients can be computed rapidly with Fast Fourier Transform. Finally, in Section~8 we present brief conclusions.

 \subsection{Sobolev norms beyond this paper}
 
As an aside, our original interest in orthonormal systems \R{eq:DefVarphi} has been motivated in \cite{iserles19oss} by the numerical solution of the linear Schr\"odinger equation in the semi-classical regime,
\begin{displaymath}
	\label{Schrodinger}
	\ii\varepsilon\frac{\partial u}{\partial t}=-\varepsilon^2 \frac{\partial^2 u}{\partial x^2}+V(x)u,\qquad x\in \BB{R} ,\quad t\geq0,
\end{displaymath}
given with an initial condition at $t=0, x \in \BB{R}$. Here $0<\varepsilon\ll1$, while the {\em interaction potential\/} $V$ is real. The solution of this equation conserves the standard $\CC{L}_2$ norm (which motivates the use of $\CC{L}_2$-orthogonal systems), but it also has another important invariant: its Hamiltonian, 
\begin{displaymath}
	H(u)=\int_{-\infty}^\infty [\varepsilon |u_x(x)|^2+\varepsilon^{-1}V(x)|u(x)|^2]\D x,
\end{displaymath}
is conserved. This might be viewed as a conservation of a non-standard Sobolev norm (if $V$ is positive). While the design of Hamiltonian methods for the Schr\"odinger equation is still an open problem, it motivates the work reported in this paper.

We mention in passing another example in which nonstandard Sobolev norms are non-increasing, the diffusion equation,
\begin{displaymath}
	\label{diffusion}
	\frac{\partial u}{\partial t}=\frac{\partial}{\partial x} \left[ a(x) \frac{\partial u}{\partial x}\right],\qquad x\in\BB{R},\quad t\geq0,
\end{displaymath}
where $a(x)>a_{\mathrm{min}} > 0$ for all $x \in \BB{R}$, given with an initial condition for $t=0, x \in \BB{R}$. It is readily shown that norm induced by the following nonstandard Sobolev inner product is non-increasing as a function of time,
\begin{displaymath}
	\langle u, u \rangle_a := \int_{-\infty}^\infty [a(x) u_x^2(x) + u^2(x) ] \D x.
	\end{displaymath}

We do not pursue these general Sobolev inner products in this paper, but anticipate reporting on such results in the future.

\subsection{Related work}

{\em Sobolev orthogonality:\/} Polynomials orthogonal with respect to Sobolev measures have been considered for a long while but the subject received considerable impetus with the introduction of coherent pairs in \cite{iserles91opo} and has been surveyed in \cite{marcellan91ops,marcellan15oso}. Natural questions, given the constructs \R{eq:DefVarphi} and \R{eq:TTR} are, firstly, how to generate Sobolev-orthogonal systems on the real line and, secondly, is a Fourier integral of an orthogonal polynomial system scaled by {\em any\/} reasonable function orthogonal with respect to {\em some\/} inner product, whether in a classical or Sobolev sense, in line with the $\CC{L}_2$ theory as briefly reviewed in Section~2. These related questions are the focus of this paper. Intriguingly, as things stand, the theory in this paper is heavily based on the theory of classical orthogonal polynomials (as distinct from Sobolev-orthogonal polynomials).

\vspace{6pt}
\noindent {\em Fourier--Bessel functions} \cite{diekema12diu,mantica06fbf}: Given a Borel measure $\D\mu$ and the underlying orthonormal system $\{p_n\}_{n\in\bb{Z}_+}$, we define
\begin{equation}
	\label{eq:def_varphi}
	\varphi_n(x)=\int_{-\infty}^\infty p_n(\xi) \ee^{-\ii x\xi}\D\mu(\xi)
\end{equation}
as the $n$th  {\em Fourier--Bessel function:\/} the name is motivated by the Legendre measure $\D\mu(x)=\chi_{(-1,1)}(x)\D x$, whereby $\varphi_n(x)= \sqrt{2\pi/x} \CC{J}_{n+\frac12}(x)$. Note the similarity between \R{eq:DefVarphi} and \R{eq:def_varphi} (disregarding the normalising factor and the sign in the exponential, neither of which is of much importance), namely that both are Fourier transforms of $p_n$ with added scaling function: $\sqrt{w}$ in the first instance, $w$ in the second. 

Further variation on this theme is the identity
\begin{equation}
	\label{eq:ChebyIntegral}
	\int_{-1}^1 \CC{T}_n(\xi)\ee^{\ii x\xi}\frac{\D\xi}{\sqrt{1-\xi^2}} =\pi \ii^n \CC{J}_n(x),\qquad n\in\BB{Z}_+,
\end{equation}
where $\CC{T}_n$ is the $n$th Chebyshev polynomial \cite{diekema12diu}. In that case the weight function has disappeared altogether in the Fourier integral. Note that, unlike \R{eq:DefVarphi}, Fourier--Bessel functions need not be orthogonal although, interestingly enough, disregarding signs and normalising constants, the two formul\ae{} concide (and orthogonality is recovered) for the Legendre measure. 

\setcounter{equation}{0}
\setcounter{figure}{0}
\section{Characterisation of Sobolev-orthogonal systems}\label{sec:theory}

Let us first state the desiderata. We are interested in functions $\Phi = \{\varphi_n\}_{n\in\bb{Z}_+} \subset \CC{L}_2(\BB{R})$ such that \emph{both} of the following properties hold.
\begin{enumerate}[(A)]
	\item There exists sequences $\{b_n\}_{n\in\bb{Z}_+} \subset \BB{C} \setminus\{0\}$ and $\{c_n\}_{n\in \bb{Z}_+} \subset \BB{R}$ such that
	\begin{equation}
	  \label{eq:3term}
		\varphi_n'(x) = -\overline{b_{n-1}} \varphi_{n-1}(x) + \mathrm{i} c_n \varphi_n(x) + b_n \varphi_{n+1}(x)
	\end{equation}
for $n = 0,1,\ldots$ (with $b_{-1} = 0$ by convention);
\item $\Phi$ is an orthonormal sequence with respect to the Sobolev inner product
\begin{equation}\label{eqn:Sobolevip}
	\langle \varphi, \psi\rangle_v = \sum_{\ell=0}^\infty v_\ell \int_{-\infty}^\infty \varphi^{(\ell)}(x) \overline{\psi^{(\ell)}(x)} \,\mathrm{d} x,
\end{equation}
defined by the non-zero, non-negative sequence $\{ v_\ell\}_{\ell \in \bb{Z}_+} \subset [0,\infty)$ such that $\sum_{\ell=0}^\infty v_\ell>0$.
\end{enumerate}

\begin{theorem}[\cite{iserles19oss,iserles20for}]\label{thm:vanilla}
  A sequence $\Phi = \{\varphi_n\}_{n\in\bb{Z}_+} \subset \CC{L}_2(\BB{R})$ satisfies criterion $(A)$ if and only if
  \begin{equation}\label{eqn:phinformula}
  	\varphi_n(x) = \frac{\ee^{\ii \theta_n}}{\sqrt{2\pi}} \int_{-\infty}^\infty\ee^{\ii x \xi} p_n(\xi)  g(\xi)   \, \mathrm{d}\xi,
  \end{equation}
where
\begin{itemize}
	\item $P = \{p_n\}_{n\in\bb{Z}_+}$ is an orthonormal polynomial system on the real line with respect to a probability measure on the real line with all moments finite and with infinitely many	points of increase;
	\item $\Theta = \{ \theta_n \}_{n\in\bb{Z}_+} \subset [0,2\pi)$;
	\item $g \in \CC{L}_2(\BB{R})$ satisfies $\lim_{\xi \to \pm \infty} |\xi^k g(\xi)| = 0 \text{ for } k = 0,1,2,\ldots$. We call such functions \emph{mollifiers}.
\end{itemize}
\end{theorem}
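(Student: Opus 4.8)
Since the statement is an equivalence, the plan is to prove the two implications separately; the converse is essentially a computation, whereas the forward implication carries the real content. For sufficiency, suppose $\varphi_n$ is given by \eqref{eqn:phinformula}. Because $g$ is a mollifier, $\ii\xi\,p_n(\xi)g(\xi)$ remains integrable with rapid decay, so I may differentiate under the integral to obtain $\varphi_n'(x)=\frac{\ee^{\ii\theta_n}}{\sqrt{2\pi}}\int_{-\infty}^\infty \ee^{\ii x\xi}\,\ii\xi\,p_n(\xi)g(\xi)\,\mathrm{d}\xi$. Any orthonormal polynomial system obeys a three-term recurrence $\xi p_n(\xi)=\beta_n p_{n+1}(\xi)+\alpha_n p_n(\xi)+\beta_{n-1}p_{n-1}(\xi)$ with $\beta_n>0$ and $\alpha_n\in\BB{R}$; I would substitute this into the integrand, split the integral, and recognise the three pieces as multiples of $\varphi_{n-1},\varphi_n,\varphi_{n+1}$. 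Reading off coefficients gives $c_n=\alpha_n\in\BB{R}$ and $b_n=\ii\beta_n\ee^{\ii(\theta_n-\theta_{n+1})}\neq 0$, and a one-line check confirms that the coefficient of $\varphi_{n-1}$ is exactly $-\overline{b_{n-1}}$, which is precisely the skew-Hermitian pattern of \eqref{eq:3term}.

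For necessity, assume $\Phi$ satisfies \eqref{eq:3term}. First I would bootstrap regularity: the right-hand side of \eqref{eq:3term} lies in $\CC{L}_2(\BB{R})$, so $\varphi_n\in\CC{H}^1(\BB{R})$, and differentiating \eqref{eq:3term} repeatedly shows by induction that $\varphi_n\in\bigcap_k\CC{H}^k(\BB{R})$. Passing to Fourier transforms $f_n(\xi)=\frac{1}{\sqrt{2\pi}}\int_{-\infty}^\infty\varphi_n(x)\ee^{-\ii x\xi}\,\mathrm{d}x$ (in $\CC{L}_2$ by Plancherel), differentiation becomes multiplication by $\ii\xi$, and \eqref{eq:3term} turns into the purely algebraic recurrence $b_n f_{n+1}(\xi)=\ii(\xi-c_n)f_n(\xi)+\overline{b_{n-1}}f_{n-1}(\xi)$ in the index $n$, with $\xi$ a parameter. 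Setting $g:=f_0$ and solving from $n=0$ (recall $b_{-1}=0$), an easy induction yields $f_n=q_n g$ with $q_n$ a polynomial of exact degree $n$ satisfying $b_n q_{n+1}=\ii(\xi-c_n)q_n+\overline{b_{n-1}}q_{n-1}$, $q_0=1$.

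The crux of the argument is a phase gauge. Writing $q_n=\ii^n\mu_n p_n$ with unimodular constants $\mu_0=1$, $\mu_{n+1}=\mu_n\,\overline{b_n}/|b_n|$ and $p_n$ real, the recurrence collapses to the real symmetric form $\xi p_n(\xi)=|b_n|p_{n+1}(\xi)+c_n p_n(\xi)+|b_{n-1}|p_{n-1}(\xi)$; here the conjugate-and-negate coupling $-\overline{b_{n-1}}$ in \eqref{eq:3term} is exactly what forces the two off-diagonal weights to coincide as the positive reals $|b_n|$. Since $|b_n|>0$ and $c_n\in\BB{R}$, Favard's theorem supplies a probability measure with infinitely many points of increase for which $\{p_n\}$ is orthonormal, and since every polynomial lies in $\CC{L}_2$ of that measure, all moments are finite. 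Inverting the transform gives $\varphi_n(x)=\ii^n\mu_n\frac{1}{\sqrt{2\pi}}\int_{-\infty}^\infty\ee^{\ii x\xi}p_n(\xi)g(\xi)\,\mathrm{d}\xi$, which is \eqref{eqn:phinformula} with $\ee^{\ii\theta_n}=\ii^n\mu_n$.

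The step I expect to be the main obstacle is the analytic bookkeeping of the mollifier class, rather than the algebra. From $\varphi_n\in\bigcap_k\CC{H}^k(\BB{R})$ one reads off immediately that $\xi^k g\in\CC{L}_2(\BB{R})$ for every $k$, that is, control of the weighted $\CC{L}_2$ size of $g$; but upgrading this to the \emph{pointwise} decay $|\xi^k g(\xi)|\to 0$ demanded of a mollifier (and, in the converse, justifying differentiation under the integral) is genuinely delicate, since $\CC{H}^k$-regularity of $\varphi_0$ in $x$ does not by itself control the smoothness of $g$ that a Sobolev-embedding argument applied to $\xi\mapsto\xi^k g(\xi)$ would need. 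This is where I would concentrate the effort, either by exploiting further structure forced on admissible $\Phi$ or by settling for the weighted-$\CC{L}_2$ decay that the Fourier argument delivers directly.
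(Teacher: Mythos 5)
The paper itself never proves Theorem~\ref{thm:vanilla} --- it is imported wholesale from \cite{iserles19oss,iserles20for} --- so the comparison is with the argument of those works, and your route is the same canonical one: sufficiency by differentiation under the integral plus the three-term recurrence for orthonormal polynomials, necessity by a Sobolev bootstrap, passage to the Fourier side, a phase gauge, and Favard's theorem. Your computations check out: in sufficiency the coefficient of $\varphi_{n-1}$ is $\ii\beta_{n-1}\ee^{\ii(\theta_n-\theta_{n-1})}=-\overline{b_{n-1}}$, which is exactly the skew-Hermitian pattern of \eqref{eq:3term}; in necessity the gauge $\mu_{n+1}=\mu_n\overline{b_n}/|b_n|$ gives $b_n\mu_{n+1}=|b_n|\mu_n$ and $\overline{b_{n-1}}\mu_{n-1}=|b_{n-1}|\mu_n$, so the recurrence for the $p_n$ collapses to a real symmetric one with positive off-diagonal entries and real diagonal, to which Favard applies, and all moments are finite because polynomials are square-integrable against the resulting measure.

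The step you flag at the end is a genuine issue, and you are right that it cannot be closed as stated: criterion (A) yields only $\xi^k g\in\CC{L}_2(\BB{R})$ for all $k$, and this does not imply pointwise decay of any a.e.-representative of $g=\hat\varphi_0$. Concretely, take $g=\sum_{n\geq1}n\,\chi_{[n,\,n+\exp(-n^4)]}$, which satisfies $\xi^kg\in\CC{L}_2(\BB{R})$ for every $k$; choosing any coefficients $b_n>0$, $c_n\in\BB{R}$, building $q_n$ by the algebraic recurrence and setting $\varphi_n=\mathcal{F}^{-1}[q_ng]$ produces a system lying in $\bigcap_k\CC{H}^k(\BB{R})$ and satisfying \eqref{eq:3term}, yet since each spike has positive measure no modification of $g$ on a null set decays pointwise. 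So the necessity direction can only ever deliver decay in the weighted-$\CC{L}_2$ (essential) sense --- your stated fallback --- and that weaker class is also all that sufficiency requires, since $\xi^kp_ng\in\CC{L}_2(\BB{R})$ suffices, via Plancherel, to identify $\widehat{\varphi_n'}=\ii\xi\widehat{\varphi_n}$ and run the recurrence on the Fourier side rather than under a pointwise-dominated integral. In short, the obstacle you localise is a looseness in the theorem's definition of mollifier (pointwise versus essential decay), not a missing idea in your argument; with the weighted-$\CC{L}_2$ reading of the mollifier class, your proposal is complete and matches the cited proof.
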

\begin{remark}\label{rem:bnpos}
	It is possible to ensure that the paramters $\{b_n\}_{n\in\bb{Z}_+}$ satisfy $b_n > 0$ without any genuine loss of generality. This is achieved by simply setting $\ee^{\ii\theta_n} = \ii^n$. We henceforth assume that $b_n > 0$.
\end{remark}
\begin{remark}\label{rem:real}
	Under the assumption of Remark \ref{rem:bnpos}, the functions $\Phi$ are real if and only if $g(\xi)$ has even real part and odd imaginary part, and $P$ is orthonormal with respect to an even measure (i.e.~it is symmetric about the axis). In this case, $b_n > 0$ and $c_n = 0$ for all $n$.
\end{remark}
Theorem \ref{thm:vanilla} and Remarks \ref{rem:bnpos} and \ref{rem:real} were proved by the present authors in \cite{iserles19oss,iserles20for} along with results characterising when such systems are orthogonal with respect to the standard inner product on $\CC{L}_2(\BB{R})$. The following Theorem generalises these orthogonality results to the Sobolev inner products in equation \eqref{eqn:Sobolevip}.
\begin{theorem}\label{thm:orthogonalPhi}
Let $\varphi$ satisfy criterion $(A)$, which implies that \eqref{eqn:phinformula} holds. Then $\varphi$ also satisfies criterion $(B)$ if and only if the mollifier $g$ satisfies
\begin{equation}\label{eqn:wtog}
 w(\xi) = v(\xi) |g(\xi)|^2,
\end{equation}
where $w(\xi)$ is the positive weight function with respect to which the polynomials $P$ are orthonormal, and $v(\xi) = \sum_{\ell=0}^\infty v_\ell \xi^{2\ell}$. In particular, it is necessary for the non-negative sequence $\{ v_\ell\}_{\ell \in \bb{Z}_+}$ to decay sufficiently fast that $v(\xi)$ is finite on the support of $w$. 
	\end{theorem}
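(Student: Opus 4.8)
The plan is to compute the Sobolev inner product $\langle \varphi_m, \varphi_n\rangle_v$ directly from the representation \eqref{eqn:phinformula} and reduce it to an ordinary $\CC{L}_2$-type integral against polynomials, where classical orthogonality of $P$ does the rest. First I would differentiate \eqref{eqn:phinformula} under the integral sign: since $\varphi_n$ is (up to the phase $\ee^{\ii\theta_n}$) the inverse Fourier transform of $p_n(\xi)g(\xi)$, each derivative $\varphi_n^{(\ell)}$ corresponds on the Fourier side to multiplication by $(\ii\xi)^\ell$, so $\varphi_n^{(\ell)}$ is the inverse transform of $(\ii\xi)^\ell p_n(\xi) g(\xi)$. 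The mollifier decay hypothesis $\lim_{\xi\to\pm\infty}|\xi^k g(\xi)|=0$ for all $k$, together with $g\in\CC{L}_2$, is exactly what guarantees that $\xi^\ell p_n(\xi) g(\xi)\in\CC{L}_2(\BB{R})$ for every $\ell$, so each of these transforms makes sense and differentiation under the integral is justified.

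Next I would apply Parseval's theorem to each summand of \eqref{eqn:Sobolevip}. For a fixed $\ell$,
\begin{equation*}
	\int_{-\infty}^\infty \varphi_m^{(\ell)}(x)\overline{\varphi_n^{(\ell)}(x)}\,\D x = \ee^{\ii(\theta_m-\theta_n)}\int_{-\infty}^\infty (\ii\xi)^\ell p_m(\xi)g(\xi)\,\overline{(\ii\xi)^\ell p_n(\xi)g(\xi)}\,\D\xi,
\end{equation*}
and since $(\ii\xi)^\ell\overline{(\ii\xi)^\ell}=\xi^{2\ell}$ this equals $\ee^{\ii(\theta_m-\theta_n)}\int_{-\infty}^\infty \xi^{2\ell} p_m(\xi)\overline{p_n(\xi)}\,|g(\xi)|^2\,\D\xi$. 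Summing over $\ell$ against the weights $v_\ell$ and interchanging sum and integral — justified by the nonnegativity of all terms via Tonelli, or by the decay of $g$ controlling $v(\xi)|g(\xi)|^2$ — collapses the series $\sum_\ell v_\ell\xi^{2\ell}$ into exactly $v(\xi)$, giving
\begin{equation*}
	\langle\varphi_m,\varphi_n\rangle_v = \ee^{\ii(\theta_m-\theta_n)}\int_{-\infty}^\infty p_m(\xi)\overline{p_n(\xi)}\,v(\xi)|g(\xi)|^2\,\D\xi.
\end{equation*}
From here the equivalence is transparent: criterion $(B)$ demands this equal $\delta_{mn}$, and since $P$ is orthonormal with respect to $w$, the condition holds precisely when $v(\xi)|g(\xi)|^2=w(\xi)$, which is \eqref{eqn:wtog}. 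The finiteness of $v(\xi)$ on $\CC{supp}\,w$ is then forced because $w$ is a genuine probability weight and $|g|^2$ cannot vanish on a set of positive measure there without destroying orthonormality.

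The main obstacle I anticipate is rigorously justifying the interchange of differentiation with the integral and the interchange of the infinite sum over $\ell$ with the integral over $\xi$, both of which rely essentially on the mollifier condition rather than being purely formal. The differentiation step needs each $\xi^\ell p_n(\xi)g(\xi)$ to lie in $\CC{L}_2$ so that Parseval applies termwise, and the summation step needs $v(\xi)|g(\xi)|^2$ to be integrable against $|p_m p_n|$ — equivalently, that $v(\xi)$ is finite $w$-almost everywhere, which is why the theorem explicitly flags the necessity of sufficiently fast decay of $\{v_\ell\}$. I would handle this by first establishing the identity for the truncated sum $\sum_{\ell=0}^L$ (a finite interchange, trivially valid), then passing to the limit $L\to\infty$ using monotone convergence on the nonnegative integrand $\sum_\ell v_\ell\xi^{2\ell}|p_m(\xi)\overline{p_n(\xi)}|\,|g(\xi)|^2$, so that the limit is finite exactly when $v(\xi)|g(\xi)|^2$ is integrable against the polynomials — which is precisely the regime in which the Sobolev inner product is well defined on $\Phi$ in the first place.
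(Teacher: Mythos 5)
Your proposal is correct and follows essentially the same route as the paper's proof: apply Parseval's theorem termwise (using that $\widehat{\varphi_n^{(\ell)}}$ is $(\pm\ii\xi)^\ell$ times $\widehat{\varphi_n}$), sum against the $v_\ell$ to collapse the series into $v(\xi)$, and then invoke orthonormality of $P$ with respect to $w$. Your added care about the sum--integral interchange via Tonelli/monotone convergence and the truncation argument is a justification the paper leaves implicit, but it is not a different method.
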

\begin{proof}
	By Parseval's Theorem,
	\begin{displaymath}
		\int_{-\infty}^\infty \varphi_n(x)\overline{\varphi_m(x)} \,\D x = (-\ii)^{m-n} \int_{-\infty}^\infty p_n(\xi)p_m(\xi) |g(\xi)|^2 \,\D \xi.
	\end{displaymath}
Furthermore, since $\widehat{\varphi^{(\ell)}}(\xi) = (-\ii\xi)^\ell \hat{\varphi}(\xi)$ (where $\hat\varphi$ denotes the Fourier transform of $\varphi$) we have
\begin{displaymath}
	\int_{-\infty}^\infty \varphi^{(\ell)}_n(x)\overline{\varphi^{(\ell)}_m(x)} \,\D x = (-\ii)^{m-n} \int_{-\infty}^\infty p_n(\xi)p_m(\xi) \xi^{2\ell} |g(\xi)|^2 \,\D \xi.
\end{displaymath}
Therefore, 
$$
\langle \varphi_n,\varphi_m\rangle_v = (-\ii)^{m-n} \int_{-\infty}^\infty p_n(\xi)p_m(\xi) v(\xi) |g(\xi)|^2 \,\D \xi
$$
	This makes it clear that $\varphi$ is orthonormal with respect to the Sovolev inner product if and only if $P$ is orthonormal with respect to the measure $v(\xi)|g(\xi)|^2\mathrm{d}\xi$.
\end{proof}

\begin{remark}\label{rem:mollifier}
	There are infinitely many choices of $g$ which  satisfy \eqref{eqn:wtog}, namely
	\begin{displaymath}
		g(\xi) = \sqrt{\frac{w(\xi)}{v(\xi)}} \ee^{\ii \vartheta(\xi)},
	\end{displaymath}
for any measurable real-valued function $\vartheta$. Our canonical choice is $\vartheta \equiv 0$, although we know of no good reason, except for simplicity, why this might be superior to other choices.
\end{remark}

It is important to answer what space the resulting orthonormal system is dense in: ideally this is the inner product space
\begin{equation}\label{eqn:Hv}
	\CC{H}_v(\BB{R}) := \left\{ \psi \in \CC{L}_2(\BB{R}) : \langle \psi , \psi \rangle_v < \infty \right\},
\end{equation}
endowed with the inner product $\langle \cdot, \cdot \rangle_v$, but this need not be the case.

\begin{theorem}[Orthogonal bases of Paley--Wiener spaces]\label{thm:PW}
		Let $\Phi = \{\varphi_n\}_{n\in\bb{Z}_+}$ satisfy the requirements of Theorem \ref{thm:orthogonalPhi} with weight function $w(\xi)$ such that polynomials are dense in $\CC{L}_2(\BB{R};w(\xi)\mathrm{d}\xi)$. Then $\Phi$ forms a basis for the closure (in $\CC{H}_v(\BB{R})$) of the Paley--Wiener space $\mathcal{PW}_\Omega(\BB{R})$, where $\Omega$ is the support of $w$.
	\end{theorem}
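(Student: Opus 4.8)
The plan is to move everything to the Fourier side, where the Sobolev inner product becomes an ordinary weighted $\CC{L}_2$ inner product and the three objects in the statement — the system $\Phi$, the space $\CC{H}_v(\BB{R})$, and the Paley--Wiener space — all become transparent. Writing $\hat\psi$ for the (unitary) Fourier transform, the computation already carried out in the proof of Theorem \ref{thm:orthogonalPhi} generalises at once to $\langle \varphi,\psi\rangle_v = \int_{-\infty}^\infty v(\xi)\hat\varphi(\xi)\overline{\hat\psi(\xi)}\,\D\xi$, since $\widehat{\varphi^{(\ell)}}(\xi)=(-\ii\xi)^\ell\hat\varphi(\xi)$ and $v(\xi)=\sum_\ell v_\ell\xi^{2\ell}$. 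Hence $\mathcal{F}:\psi\mapsto\hat\psi$ is a unitary isomorphism from $\CC{H}_v(\BB{R})$ onto the weighted space $\{f\in\CC{L}_2(\BB{R}):\int v|f|^2<\infty\}$ equipped with the inner product $\int v f\overline{h}$. Under $\mathcal{F}$ the basis functions become $\hat\varphi_n = \ee^{\ii\theta_n}p_n g$, while $\mathcal{PW}_\Omega(\BB{R})$ becomes the set of $\CC{L}_2$ functions supported in $\Omega=\CC{supp}\,w$.

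First I would introduce the multiplication operator $M_g:h\mapsto gh$ and verify that it is an isometry from $\CC{L}_2(\BB{R};w)$ into $\mathcal{F}(\CC{H}_v(\BB{R}))$: indeed $\int v|gh|^2 = \int |h|^2\, v|g|^2 = \int |h|^2 w$ by \eqref{eqn:wtog}, and $gh$ is supported in $\Omega$ because $g$ vanishes off $\CC{supp}\,w$. Because polynomials are dense in $\CC{L}_2(\BB{R};w)$ by hypothesis, the orthonormal polynomials $\{p_n\}$ form an orthonormal basis of $\CC{L}_2(\BB{R};w)$, and applying the isometry $M_g$ shows that $\{p_n g\}=\{\ee^{-\ii\theta_n}\hat\varphi_n\}$ is an orthonormal basis of the subspace $M_g(\CC{L}_2(\BB{R};w))$. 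Pulling back through $\mathcal{F}^{-1}$, $\Phi$ is an orthonormal basis of $\mathcal{F}^{-1}M_g(\CC{L}_2(\BB{R};w))$, so it remains only to identify this subspace with the $\CC{H}_v(\BB{R})$-closure of $\mathcal{PW}_\Omega(\BB{R})$.

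The main work is this last identification, which I expect to be the only genuinely delicate step. On the Fourier side it amounts to showing that $M_g(\CC{L}_2(\BB{R};w))$ equals the $\|\cdot\|_v$-closure of $S:=\{f\in\CC{L}_2(\BB{R}):\CC{supp}\,f\subseteq\Omega,\ \int v|f|^2<\infty\}$, the Fourier image of $\mathcal{PW}_\Omega(\BB{R})\cap\CC{H}_v(\BB{R})$. The inclusion $M_g(\CC{L}_2(\BB{R};w))\subseteq S$ is immediate from the isometry above. For the reverse, given $f\in S$ I would set $h=f/g$ on $\{g\neq0\}$ and $h=0$ elsewhere; the identity $\int|h|^2 w = \int|f|^2\,(w/|g|^2) = \int|f|^2 v = \|f\|_v^2<\infty$ shows $h\in\CC{L}_2(\BB{R};w)$ and $M_g h=f$ away from the null set $\{g=0\}\cap\Omega$, which is negligible precisely because $g=\sqrt{w/v}\,\ee^{\ii\vartheta}$ (Remark \ref{rem:mollifier}) is nonzero almost everywhere on $\Omega=\CC{supp}\,w$. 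Since $M_g(\CC{L}_2(\BB{R};w))$ is the isometric image of a complete space it is itself closed, so it coincides with $\overline{S}$. The two points requiring care here are that $v$ is typically unbounded — so that $\mathcal{PW}_\Omega(\BB{R})$ need not lie inside $\CC{H}_v(\BB{R})$ and one must close up only its intersection with $\CC{H}_v(\BB{R})$ — and the bookkeeping on the common zero set of $g$ and $w$, which is harmless for all the weights of interest. Finally, an orthonormal system is automatically an orthonormal basis of the closure of its span, so the phrase \emph{forms a basis} requires nothing beyond the span identification just established.
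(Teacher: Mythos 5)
Your proof is correct and is essentially the paper's own route: the paper gives no details beyond remarking that the result follows by modifying Theorem 9 of \cite{iserles19oss}, and that modification is precisely your Fourier-side argument — the unitary map from $\CC{H}_v(\BB{R})$ onto a weighted $\CC{L}_2$ space, the multiplication isometry $h\mapsto gh$ from $\CC{L}_2(\BB{R};w)$, and the identification of its image with the closure of $\mathcal{PW}_\Omega(\BB{R})\cap\CC{H}_v(\BB{R})$. The two caveats you flag — that one must close up $\mathcal{PW}_\Omega(\BB{R})\cap\CC{H}_v(\BB{R})$ rather than $\mathcal{PW}_\Omega(\BB{R})$ itself, and that $g$ must be nonzero a.e.\ on $\Omega$ — are loosenesses in the paper's statement rather than gaps in your argument.
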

	
	A proof of Theorem \ref{thm:PW}  can be obtained by modifying Theorem 9 from \cite{iserles19oss}. The key corollary is that for a basis $\Phi$ satisfying the requirements of Theorem \ref{thm:orthogonalPhi} to be complete in $\CC{L}_2(\BB{R})$, it is necessary that the polynomial basis $P$ is orthogonal with respect to a measure which is supported on the whole real line.

\setcounter{equation}{0}
\setcounter{figure}{0}
\section{Sobolev cascades}\label{sec:cascades}

In this section we derive two methods for producing orthonormal systems in the Sobolev space $\CC{H}^s(\BB{R})$ where $s = 0,1,2,\ldots$.

\subsection{Cascades of first and second kind}

For a weight function $w$ and $s \in \BB{Z}_+$ we can define the following two sequences of bases:

\begin{equation}
  \label{eq:first_kind}
	\varphi_n^{\langle s \rangle} (x) = \frac{\ii^n}{\sqrt{2\pi}} \int_{-\infty}^\infty \ee^{\ii x \xi} \, p_n(\xi) \, \sqrt{\frac{w(\xi)}{\sum_{k=0}^s \xi^{2k}}} \, \mathrm{d} \xi,
\end{equation}
where $P = \{p_n\}_{n\in\bb{Z}_+}$ are orthonormal polynomials with respect to $w(\xi)$, and
\begin{equation}
  \label{eq:second_kind}
	\varphi_n^{[ s]} (x) = \frac{\ii^n}{\sqrt{2\pi}} \int_{-\infty}^\infty \ee^{\ii x \xi} \, p^{[s]}_n(\xi) \,  \sqrt{w(\xi)} \, \mathrm{d} \xi,
	\end{equation}
where $P^{[s]} = \{p_n^{[s]}\}_{n\in\bb{Z}_+}$ are orthonormal polynomials with respect to the weight
\begin{displaymath}
	w^{[s]}(\xi) = \left( \sum_{k=0}^s \xi^{2k}\right) \!w(\xi)=\frac{1-\xi^{2(s+1)}}{1-\xi^2}w(\xi).
\end{displaymath}

By the theory described in Section \ref{sec:theory}, both systems $\Phi^{\langle s \rangle} = \{\varphi_n^{\langle s \rangle}\}_{n\in\bb{Z}_+}$ and $\Phi^{[s]}= \{\varphi_n^{[s]}\}_{n\in\bb{Z}_+}$ have skew-Hermitian tridiagonal differentiation matrices and both are orthonormal systems with respect to the standard $\CC{H}^s(\BB{R})$ Sobolev inner product described in the introduction. Furthermore, all of these systems are bases for (closure of) the Paley--Wiener space $\mathcal{PW}_\Omega(\BB{R})$, where $\Omega$ is the support of $w$.

We call the sequence $\Phi^{\langle 0 \rangle}, \Phi^{\langle 1 \rangle}, \Phi^{\langle 2 \rangle}\ldots$ a {\em Sobolev cascade of the first kind\/} for the weight function $w$, and  $\Phi^{[0]}, \Phi^{[1]}, \Phi^{[2]}\ldots$ a {\em Sobolev cascade of the second kind\/} for the weight function $w$.  Note that $\varphi_n^{[0]} = \varphi_n^{\langle 0 \rangle}$.

While a cascade of the first kind is perhaps a more natural generalisation of $\CC{L}_2$-orthogonality, it is also more problematic. Typically the polynomials $p_n$ might be already known, however the explicit form of the integrals \R{eq:first_kind}, hence of the $\varphi_n^{\langle s\rangle}$s, is often unknown, even for $s=1$. The issue with cascades of the second kind is different: the polynomials $P^{[s]}$ are usually unknown for $s\in\BB{N}$ even for the most familiar measures like Legendre or Hermite. On the other hand, once we know $p_n^{[s]}$ and can compute $\varphi_n^{[0]}$ explicitly, the closed form of $\varphi_n^{[s]}$ is available for all $n,s\in\BB{Z}_+$ through  the integral \R{eq:second_kind}. Note that to compute \R{eq:DefVarphi} in a closed form we need to be able to integrate explicitly Fourier transforms of $p\sqrt{w}$ for polynomials $p$: exactly the same is required for the computation of \R{eq:second_kind}.

\subsection{Sobolev cascades of the second kind}

Orthogonal systems in a cascade of the second kind have a simple relationship.

\begin{theorem}\label{thm:cascade}
 Let $s \in \BB{Z}_+$. There exists an infinite, lower triangular matrix $C^{[s]}$ which has bandwidth $2s$, such that
 \begin{equation}\label{eqn:cascade}
 	\boldsymbol{\varphi^{[0]}} = C^{[s]} \boldsymbol{\varphi^{[s]}}.
 \end{equation}
	\end{theorem}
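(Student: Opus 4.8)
The plan is to exploit the fact that $\varphi_n^{[0]}$ and $\varphi_n^{[s]}$ arise from applying one and the same linear integral operator, namely $p \mapsto \frac{1}{\sqrt{2\pi}}\int_{-\infty}^\infty \ee^{\ii x\xi}\, p(\xi)\,\sqrt{w(\xi)}\,\D\xi$, to two different polynomial bases of the same graded space of polynomials. Since $\{p_n\}$ and $\{p_n^{[s]}\}$ are orthonormal bases with $\deg p_n = \deg p_n^{[s]} = n$, the span of $\{p_0^{[s]},\ldots,p_n^{[s]}\}$ is exactly the polynomials of degree at most $n$, so $p_n$ admits a finite expansion $p_n = \sum_{k=0}^n c_{n,k}\, p_k^{[s]}$. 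The connection matrix $(c_{n,k})$ is thus automatically lower triangular, and the whole theorem reduces to transporting this expansion through the integral and then bounding its lower bandwidth.

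For the first part I would substitute $p_n = \sum_{k=0}^n c_{n,k}\, p_k^{[s]}$ into the integral defining $\varphi_n^{[0]}$ and use linearity; recognising $\frac{1}{\sqrt{2\pi}}\int_{-\infty}^\infty \ee^{\ii x\xi}\, p_k^{[s]}(\xi)\sqrt{w(\xi)}\,\D\xi = \ii^{-k}\varphi_k^{[s]}(x)$ from \R{eq:second_kind} yields $\varphi_n^{[0]}(x) = \sum_{k=0}^n \ii^{\,n-k}\, c_{n,k}\,\varphi_k^{[s]}(x)$. In vector form this is precisely $\boldsymbol{\varphi^{[0]}} = C^{[s]}\boldsymbol{\varphi^{[s]}}$ with $(C^{[s]})_{n,k} = \ii^{\,n-k}\, c_{n,k}$, so $C^{[s]}$ inherits lower triangularity from $(c_{n,k})$. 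There are no convergence issues here, as each expansion is a finite sum.

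The bandwidth $2s$ is produced in the second part, which is the only step requiring an idea. Because $\{p_k^{[s]}\}$ is orthonormal in the measure $w^{[s]}(\xi)\,\D\xi$, the coefficient equals $c_{n,k} = \int_{-\infty}^\infty p_n(\xi)\, p_k^{[s]}(\xi)\, w^{[s]}(\xi)\,\D\xi = \int_{-\infty}^\infty p_n(\xi)\,[\,p_k^{[s]}(\xi)\, q(\xi)\,]\, w(\xi)\,\D\xi$, where $q(\xi) = \sum_{j=0}^s \xi^{2j}$ is the degree-$2s$ polynomial with $w^{[s]} = q\,w$. The bracketed factor $p_k^{[s]}\, q$ has degree $k+2s$, and since $p_n$ is orthogonal in $w$ to every polynomial of degree strictly less than $n$, the integral vanishes whenever $k+2s < n$. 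Hence $c_{n,k} = 0$ unless $n-2s \le k \le n$, which is exactly the claimed lower bandwidth $2s$.

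I expect no genuine analytic obstacle: the argument is purely algebraic. The principal point to get right is recognising the connection coefficient as an inner product in the modified measure $w^{[s]}$ and then carrying out the degree count against the \emph{unmodified} weight $w$ — essentially the classical fact that multiplying an orthogonality weight by a polynomial of degree $d$ induces a change of basis of bandwidth $d$. The specialisation to $d = 2s$ for the even polynomial $q$ is what ties the bandwidth to the Sobolev order $s$.
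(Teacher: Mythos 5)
Your proposal is correct and follows essentially the same route as the paper's own proof: expand $p_n^{[0]}$ in the basis $\{p_k^{[s]}\}$, identify the connection coefficients as integrals against the factored weight $w^{[s]} = q\,w$ with $q(\xi)=\sum_{j=0}^s\xi^{2j}$, kill the coefficients with $k+2s<n$ by orthogonality of $p_n^{[0]}$ to lower-degree polynomials in $\CC{L}_2(\BB{R};w)$, and transport the finite expansion through the weighted Fourier transform, picking up the $\ii^{\,n-k}$ factors. The degree-count argument and the resulting bandwidth bound coincide exactly with the paper's.
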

\begin{proof}
	Since $p_n^{[0]}$ and $p_n^{[s]}$ are polynomials of degree $n$ (for every $n$), there exists an upper triangular \emph{connection coefficient matrix} $C^{[s]}$ such that
	\begin{equation}\label{eqn:connection}
		p_n^{[0]} = \sum_{j=0}^n \tilde{C}^{[s]}_{n,j} p_j^{[s]}.
	\end{equation}
Since $P^{[s]}$ is an orthonormal basis with respect to the weight function $\left(\sum_{k=0}^s \xi^{2k} \right) w(\xi)$, we have the formula
\begin{equation}
   \label{eq:ConCoeffs}
	\tilde{C}^{[s]}_{n,j} = \int_{-\infty}^\infty p_n^{[0]}(\xi) p_j^{[s]}(\xi)  \left(\sum_{k=0}^s \xi^{2k} \right) \! w(\xi) \,\mathrm{d}\xi.
\end{equation}
Since $p_j^{[s]}(\xi)  \left(\sum_{k=0}^s \xi^k \right)$ is a polynomial of degree at most $j+2s$, and $P^{[0]}$ is orthonormal with respect to $w$, we have that $C^{[s]}_{n,j} = 0$ if $j \leq n-2s-1$, which proves the desired bandwidth of the matrix. The proof is completed by multiplying equation \eqref{eqn:connection} by $\sqrt{w(\xi)}$ and taking the inverse Fourier transform:
\begin{equation}\label{eqn:phiconnection}
	\varphi_n^{[0]} = \sum_{j=0}^n C^{[s]}_{n,j} \varphi_j^{[s]}, \text{ where } C^{[s]}_{n,j} = \ii^{n-j} \tilde{C}^{[s]}_{n,j}.
\end{equation}
	\end{proof}
	
Note further that if the weight function $w$ is symmetric then all the polynomials $p_n^{[s]}$ maintain the parity of $n$ and it follows easily that $C_{n,j}^{[s]}=0$ for $n+j$ odd.

Theorem \ref{thm:cascade} has two consequences. Firstly, if one can calculate $\{\varphi_0^{[0]}, \varphi_1^{[0]}, \ldots, \varphi_N^{[0]}\}$, then it is possible to calculate $\{\varphi_0^{[s]}, \varphi_1^{[s]}, \ldots, \varphi_N^{[s]}\}$ in $\mathcal{O}(N)$ operations by applying forward substitution to the banded lower triangular system with matrix $C^{[s]}$.

Secondly,  given $N+1$ expansion coefficients in the basis $\Phi^{[0]}$, we can compute the equivalent expansion coefficients in the basis $\Phi^{[s]}$ in $\mathcal{O}(N)$ operations. Specifically, if
\begin{displaymath}
	\sum_{n=0}^N a_n^{[0]} \varphi_n^{[0]}(x) = 	\sum_{n=0}^N a_n^{[s]} \varphi_n^{[s]}(x),
\end{displaymath}
then
\begin{equation}
  \label{eq:BackSubst}
	{C^{[s]}}^{\top}\! \boldsymbol{a^{[s]}} = 	\boldsymbol{a^{[0]}},
\end{equation}
which can be solved in $\mathcal{O}(N)$ operations by back substitution.

In general, it appears that the nonzero entries of $C_{n,j}^{[s]}$ obey no recognisable numerical relations: for example, the $6\times6$ principal minor of $C^{[1]}$ for the Hermite weight is
\begin{displaymath}
  \left[
  \begin{array}{cccccc}
    \sqrt{\frac32} & 0 & 0 & 0 & 0 & 0\\
    0 & \sqrt{\frac52} & 0 & 0 & 0 & 0\\
    \sqrt{\frac13} & 0 & \sqrt{\frac{19}{6}} & 0 & 0 & 0\\
    0 & \sqrt{\frac35} & 0 & \sqrt{\frac{39}{10}} & 0 & 0\\
    0 & 0 & \sqrt{\frac{18}{19}} & 0 & \sqrt{\frac{173}{38}} & 0\\
    0 & 0 & 0 & \sqrt{\frac{819}{407}} & 0 & \sqrt{\frac{407}{78}}
  \end{array}
  \right]\!.
\end{displaymath}
It is difficult to discern a pattern: numerical experiments for large values of $n$ indicate that both $C_{n,n}^{[1]}$ and $C_{n+2,n}^{[1]}$ grow like $\O{\sqrt{n}}$.

All this does not rule out computing the  $C^{[s]}_{n,j}$s numerically. Modifying a weight by a quadratic factor and computing the connection coefficients is discussed in \cite{gautschi2004orthogonal} and \cite{golub2009matrices}. We won't go into the details of this in this paper.

\setcounter{equation}{0}
\setcounter{figure}{0}
\section{Hermite-type systems}

\subsection{The Hermite--Sobolev cascade of the first kind}

A natural starting point is the Hermite weight $w(\xi)=\ee^{-\xi^2}$, $\xi\in\BB{R}$, and $s = 1$. The mollifier, by the definitions in Section \ref{sec:cascades}, is $g(\xi)=\ee^{-\xi^2}/(1+\xi^2)^{1/2}$, so
\begin{equation}
  \label{eq:SHermite}
  \varphi_n(x)=\frac{\ii^n}{\sqrt{2\pi}} \int_{-\infty}^\infty \tilde{\CC{H}}_n(\xi) \sqrt{\frac{\ee^{-\xi^2}}{1+\xi^2}} \ee^{\ii x\xi}\D\xi,\qquad n\in\BB{Z}_+,
\end{equation}
where
\begin{displaymath}
  \tilde{\CC{H}}_n(\xi)=\frac{1}{\sqrt{2^nn!\sqrt{\pi}}} \CC{H}_n(\xi),\qquad n\in\BB{Z}_+,
\end{displaymath}
are the orthonormalised Hermite polynomials. Unfortunately, the integrals \R{eq:SHermite} are not known in an explicit form, not even $\varphi_0$.

\begin{figure}[htb]
\begin{center}
  \includegraphics[width=120pt]{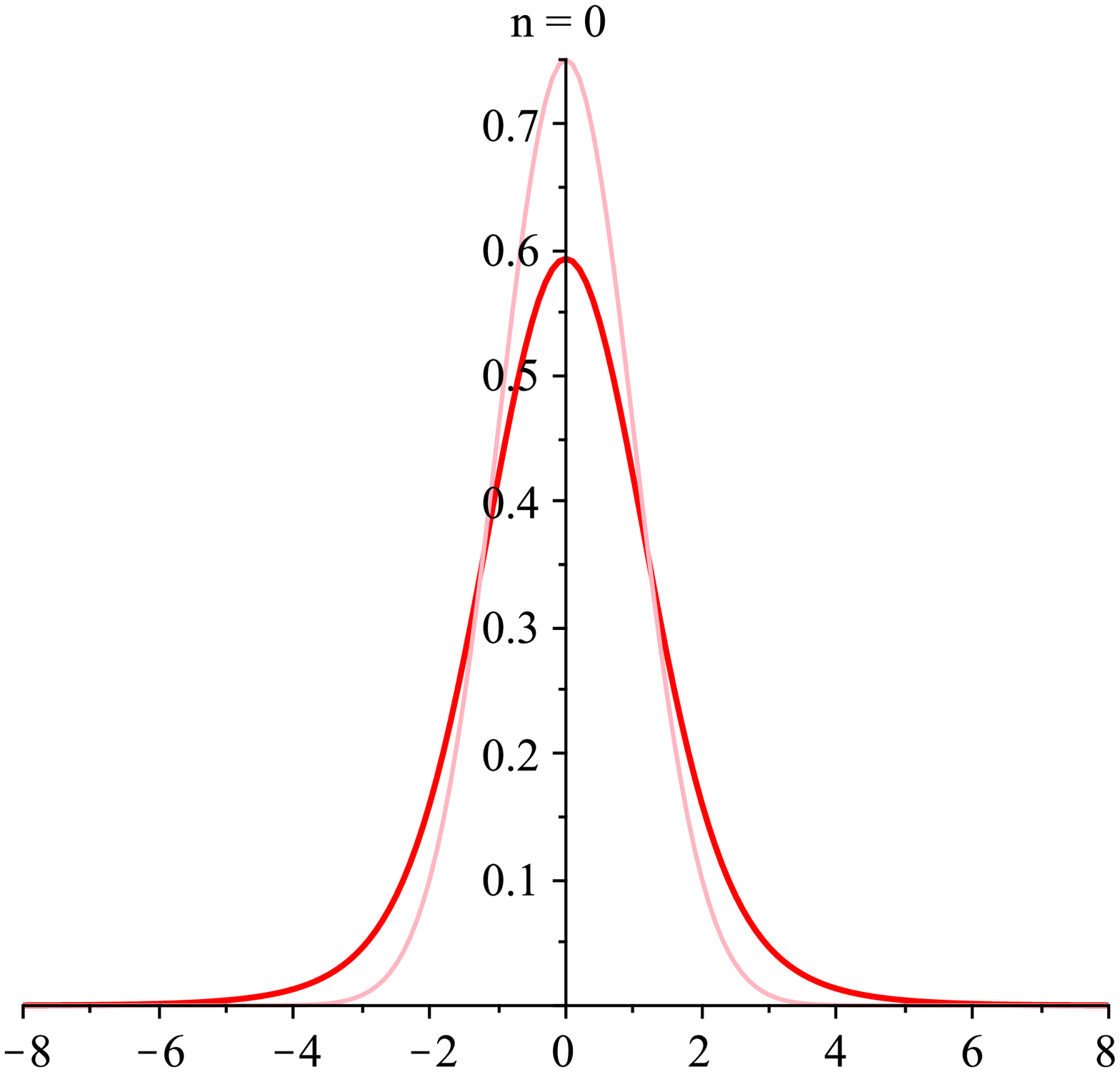}\hspace*{5pt}\includegraphics[width=120pt]{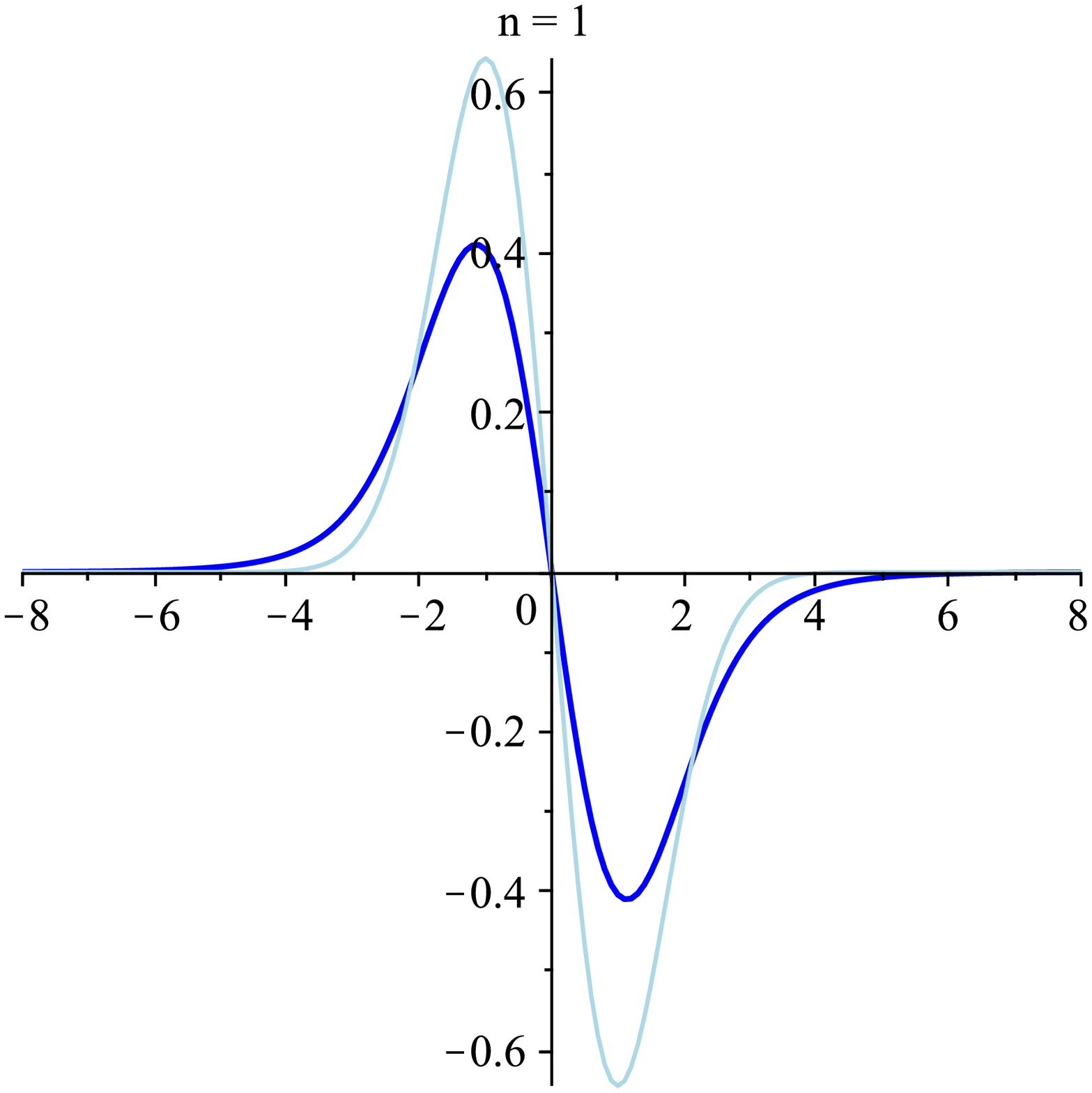}\hspace*{5pt}\includegraphics[width=120pt]{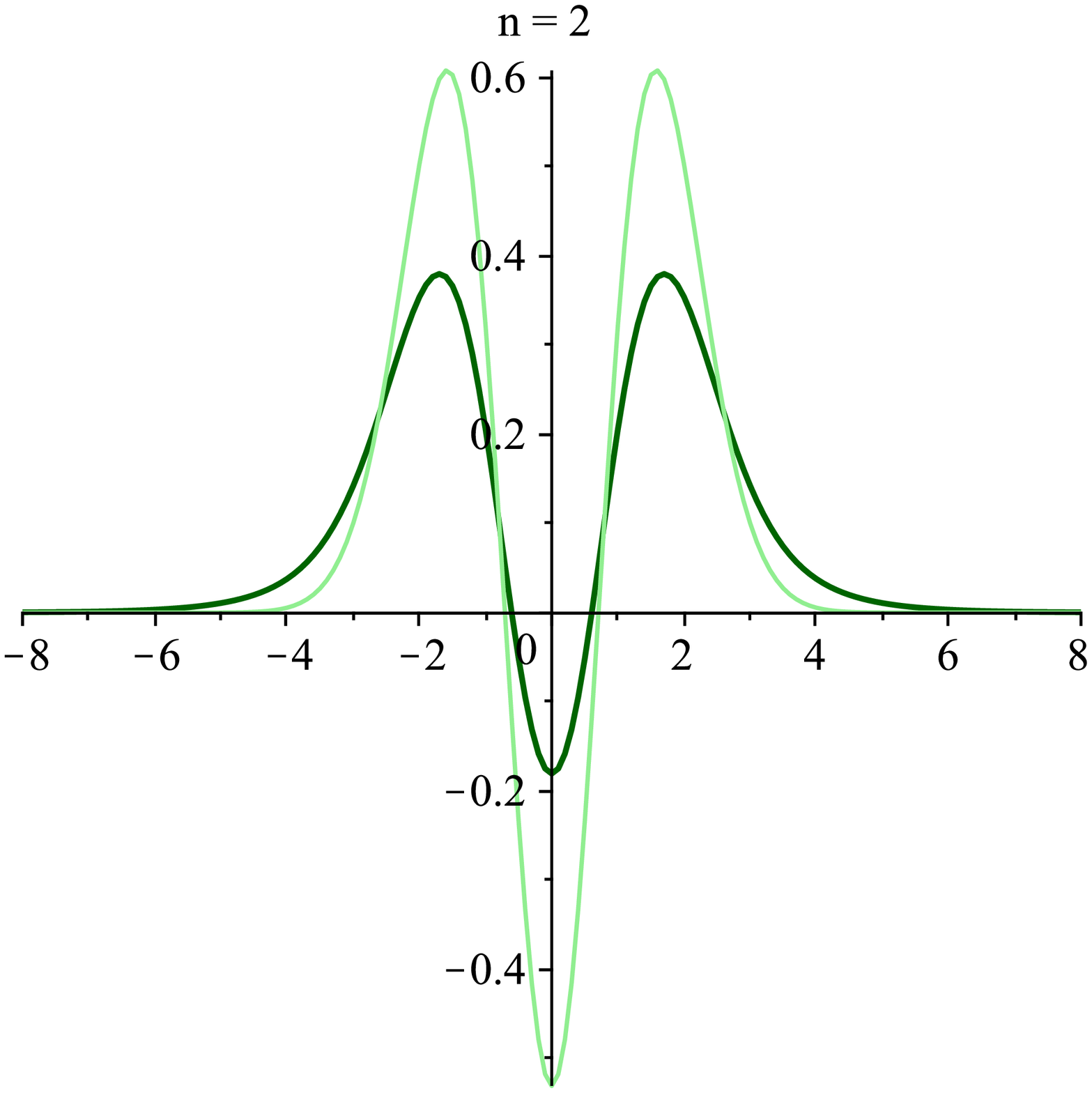}
  \includegraphics[width=120pt]{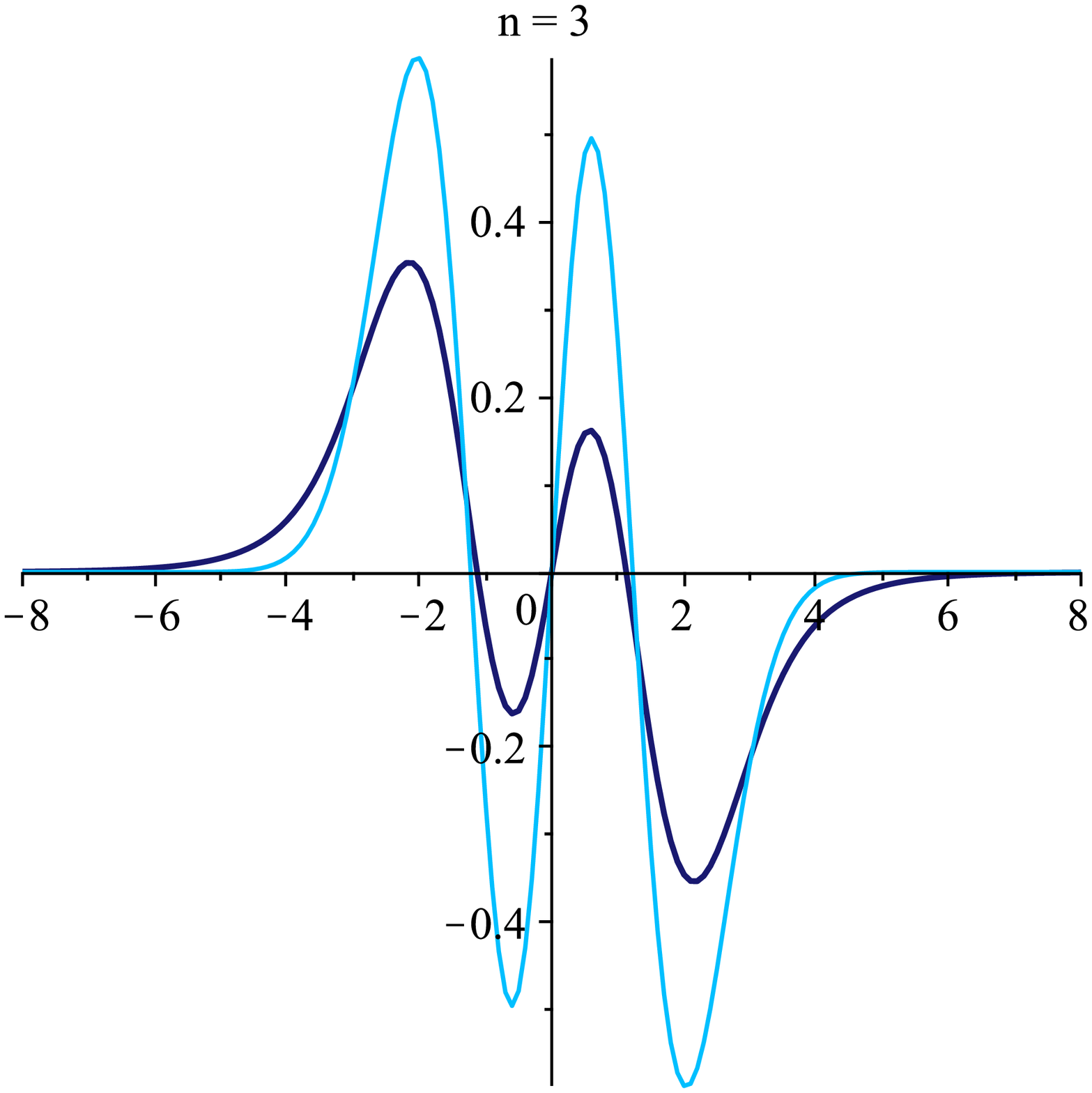}\hspace*{5pt}\includegraphics[width=120pt]{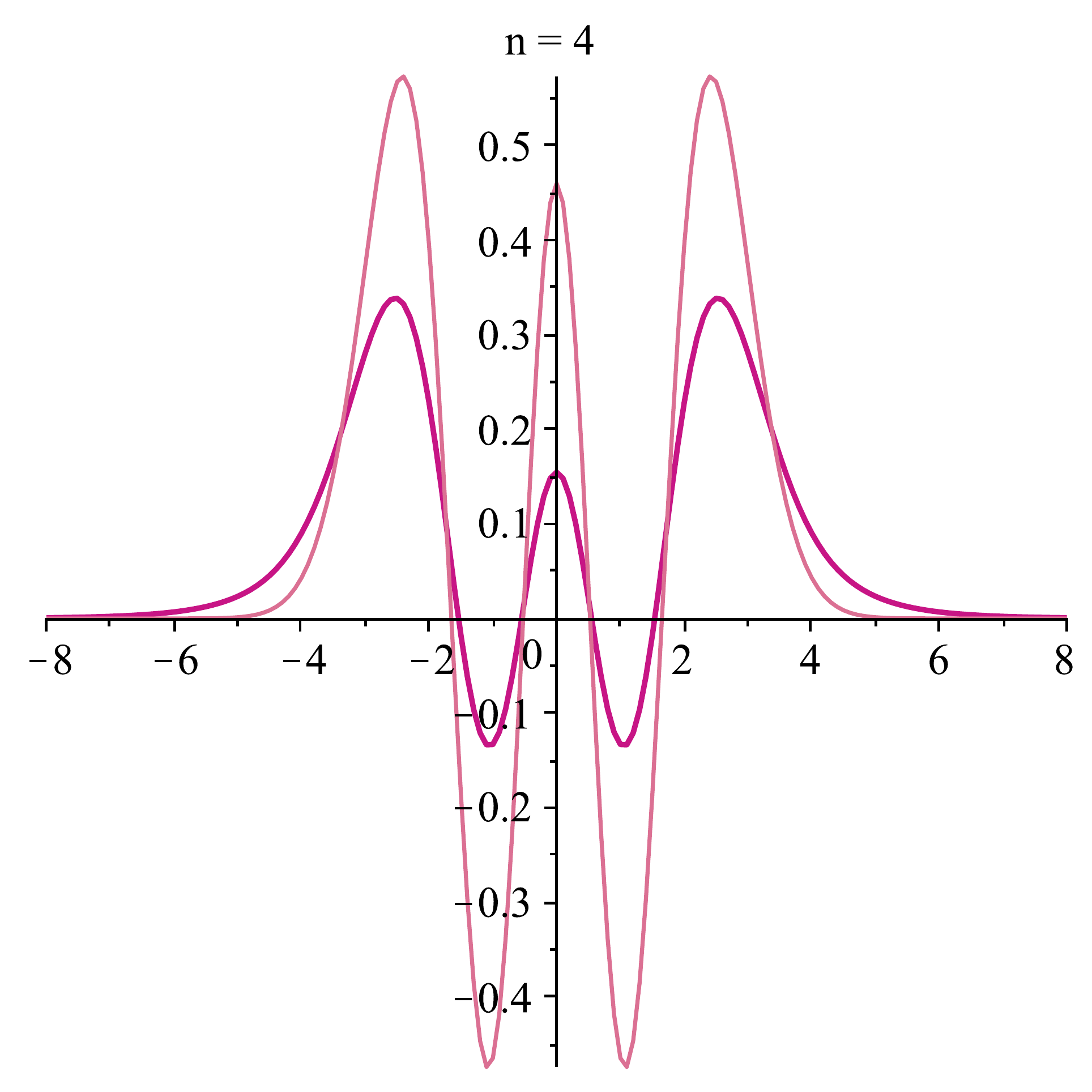}\hspace*{5pt}\includegraphics[width=120pt]{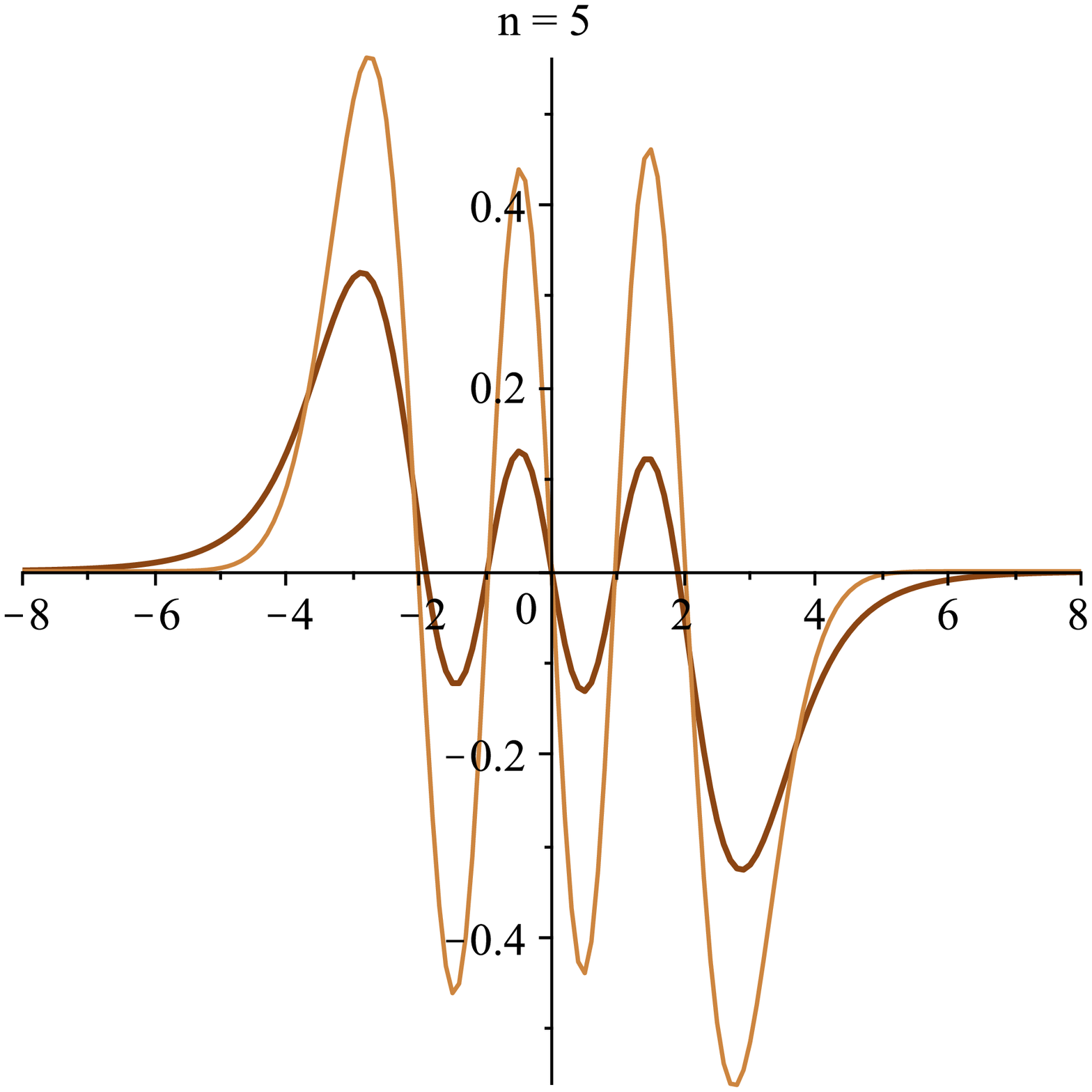}
\caption{The first six functions $\varphi_n^{\langle1\rangle}$ defined by \R{eq:SHermite}, with corresponding Hermite functions $\varphi_n^{\langle0\rangle}(x)=(-1)^n\ee^{-x^2/2}\tilde{\CC{H}}_n(x)$ in darker shade.}
\label{fig:3.1}
\end{center}
\end{figure}

In Fig.~\ref{fig:3.1} we display the functions $\varphi_n$, $n=0,\ldots,5$, computed by brute-force numerical quadrature. In the background, in fainter colour, we display the familiar Hermite functions which follow from \R{eq:DefVarphi} and are orthonormal in $\CC{L}_2(\BB{R})$ (while, by Theorem~2, the $\varphi_n$s are orthonormal in $\CC{H}^1(\BB{R})$). 

\subsection{The Hermite--Sobolev cascade of the second kind}

While the polynomials $p_n^{[s]}$ from Subsection~3.1 are unknown for $s\in\BB{N}$, it is possible to generate them, as explained in Section~3 or directly from the moments: in the simplest nontrivial case, $s=1$, the moments are
\begin{displaymath}
  \mu_n=\int_{-\infty}^\infty \xi^n (1+\xi^2)\ee^{-\xi^2}\D \xi=
  \begin{case}
     \displaystyle \frac{\sqrt{\pi}(2n)!(2n+3)}{2^{2n+1}n!}, & n\mbox{\ even,}\\[4pt]
     0, & n\mbox{\ odd,}
  \end{case}
\end{displaymath}
and the first few $p_n^{[1]}$s are
\begin{Eqnarray*}
  p_0^{[1]}(\xi)&\equiv&\frac{\sqrt{6}}{3\pi^{1/4}},\\
  p_1^{[1]}(\xi)&=&\frac{2\sqrt{5}}{5\pi^{1/4}} \xi,\\
  p_2^{[1]}(\xi)&=&\frac{2\sqrt{57}}{19\pi^{1/4}} \left(\xi^2-\frac56\right)\!,\\
  p_3^{[1]}(\xi)&=&\frac{2\sqrt{130}}{39\pi^{1/4}}\left(\xi^3-\frac{21}{10}\xi\right)\!,\\
  p_4^{[1]}(\xi)&=&\frac{2\sqrt{9861}}{519\pi^{1/4}} \left(\xi^4-\frac{75}{19}\xi^2+\frac{117}{76}\right)\!,\\
  p_5^{[1]}(\xi)&=&\frac{2\sqrt{52910}}{2035\pi^{1/4}} \left(\xi^5-\frac{245}{39}\xi^3+\frac{335}{52}\xi\right)
\end{Eqnarray*}
and so on. Likewise, it is possible to compute recurrence coefficients,
\begin{displaymath}
  b_0=\sqrt{\frac{5}{6}},\; b_1=\sqrt{\frac{19}{15}},\; b_2=\sqrt{\frac{315}{190}},\; b_3=\sqrt{\frac{1730}{741}},\; b_4=\sqrt{\frac{38665}{13494}},\; b_5=\sqrt{\frac{236925}{70411}}
\end{displaymath}
etc.\ but difficult to discern any pattern except for the obvious, $b_n= \O{n^{1/2}} $, $n\gg1$, a consequence of the proof of the Freud conjecture in \cite{lubinsky88pfc}. Likewise, we can compute $p_n^{[s]}$ for $s\geq2$: Fig.~\ref{fig:3.1half} displays $p_n^{[s]}$ for $n=2,3,4,5$ and $s=0,1,2,3,4$.

\begin{figure}[htb]
\begin{center}
   \includegraphics[width=180pt]{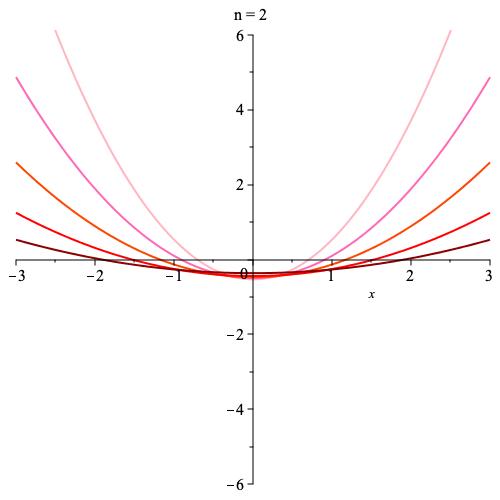}\hspace*{15pt}\includegraphics[width=180pt]{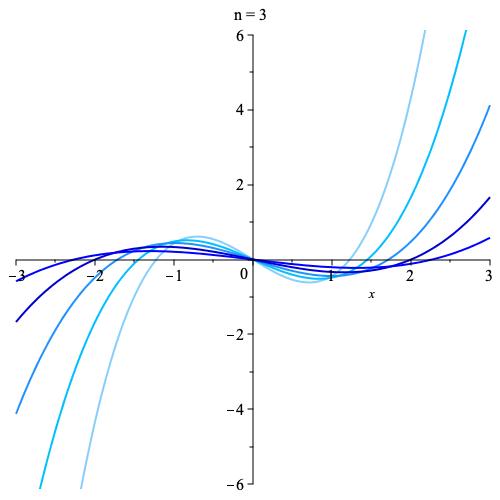}
   
   \includegraphics[width=180pt]{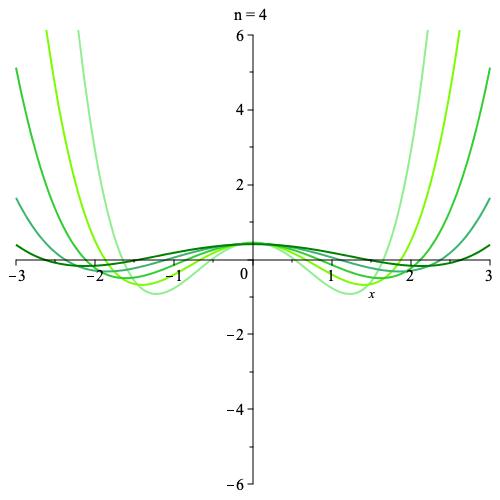}\hspace*{15pt}\includegraphics[width=180pt]{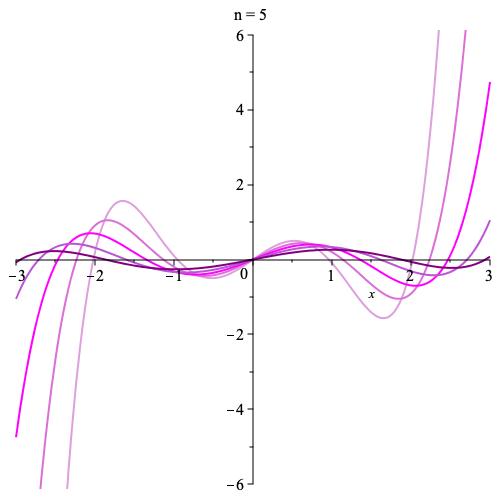}
\caption{The polynomials $p_n^{[s]}$ for $n=2,3,4,5$ and $s=0,1,2,4$ (darker hue corresponds to larger $s$).}
\label{fig:3.1half}
\end{center}
\end{figure}

 Computing the $\varphi_n^{[1]}$s in line with \R{eq:def_varphi} is straightforward:
\begin{Eqnarray*}
  \varphi_0^{[1]}(x)&=&\sqrt{\frac{2}{3}} \pi^{-1/4} \ee^{-x^2/2},\\
  \varphi_1^{[1]}(x)&=&-\sqrt{\frac{4}{5}} \pi^{-1/4} x\ee^{-x^2/2},\\
  \varphi_2^{[1]}(x)&=&\frac{1}{\sqrt{57}} \pi^{-1/4} (6x^2-1)\ee^{-x^2/2},\\
  \varphi_3^{[1]}(x)&=&-\sqrt{\frac{2}{585}}\pi^{-1/4} (10x^3-9x)\ee^{-x^2/2},\\
  \varphi_4^{[1]}(x)&=&\frac{1}{\sqrt{39444}} \pi^{-1/4} (76x^4-156x^2+45)\ee^{-x^2/2},\\
  \varphi_5^{[1]}(x)&=&-\frac{1}{\sqrt{476190}} \pi^{-1/4} (156x^5-580x^3+405x)\ee^{-x^2/2}
\end{Eqnarray*}
and so on. 

\begin{figure}[htb]
\begin{center}
  \includegraphics[width=120pt]{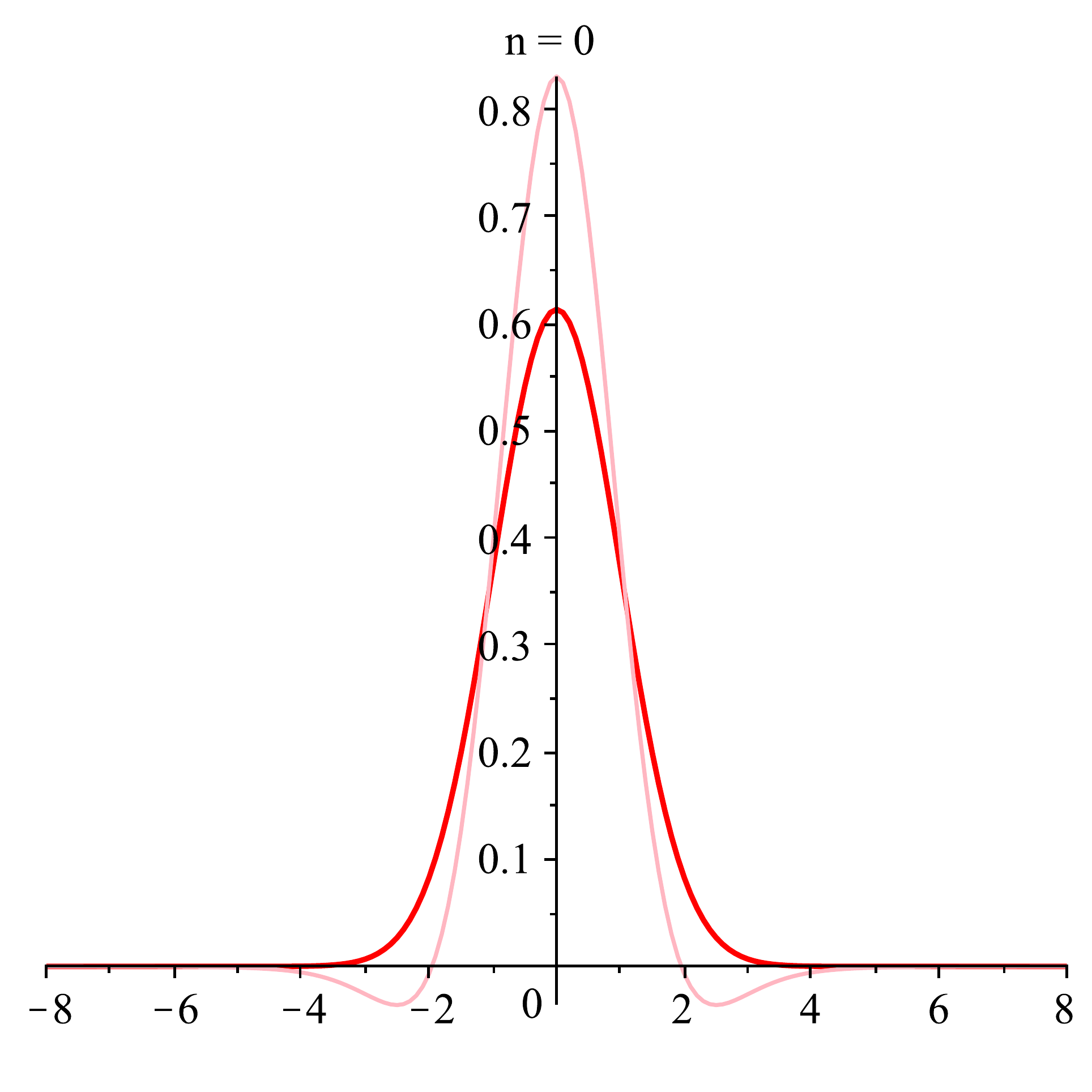}\hspace*{5pt}\includegraphics[width=120pt]{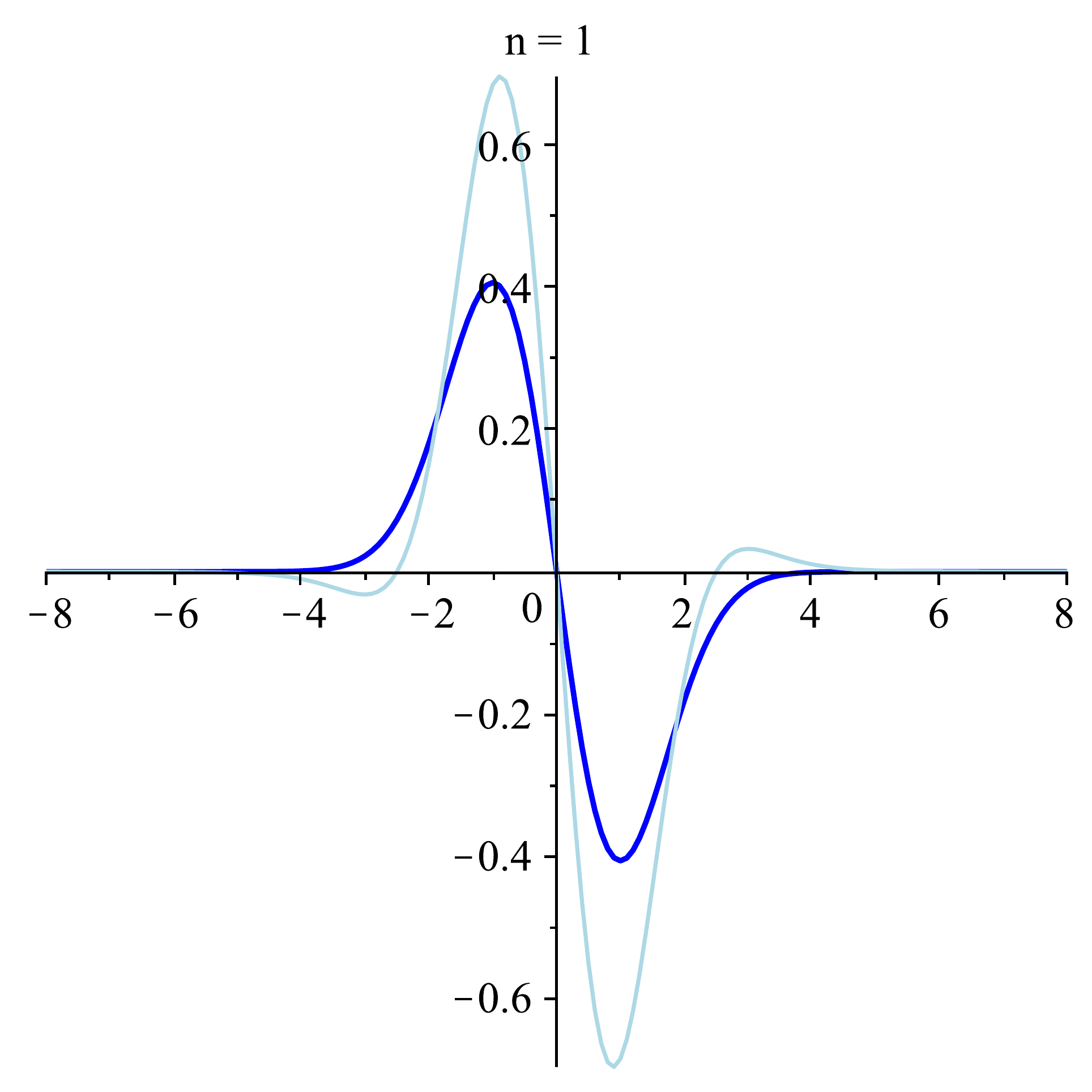}\hspace*{5pt}\includegraphics[width=120pt]{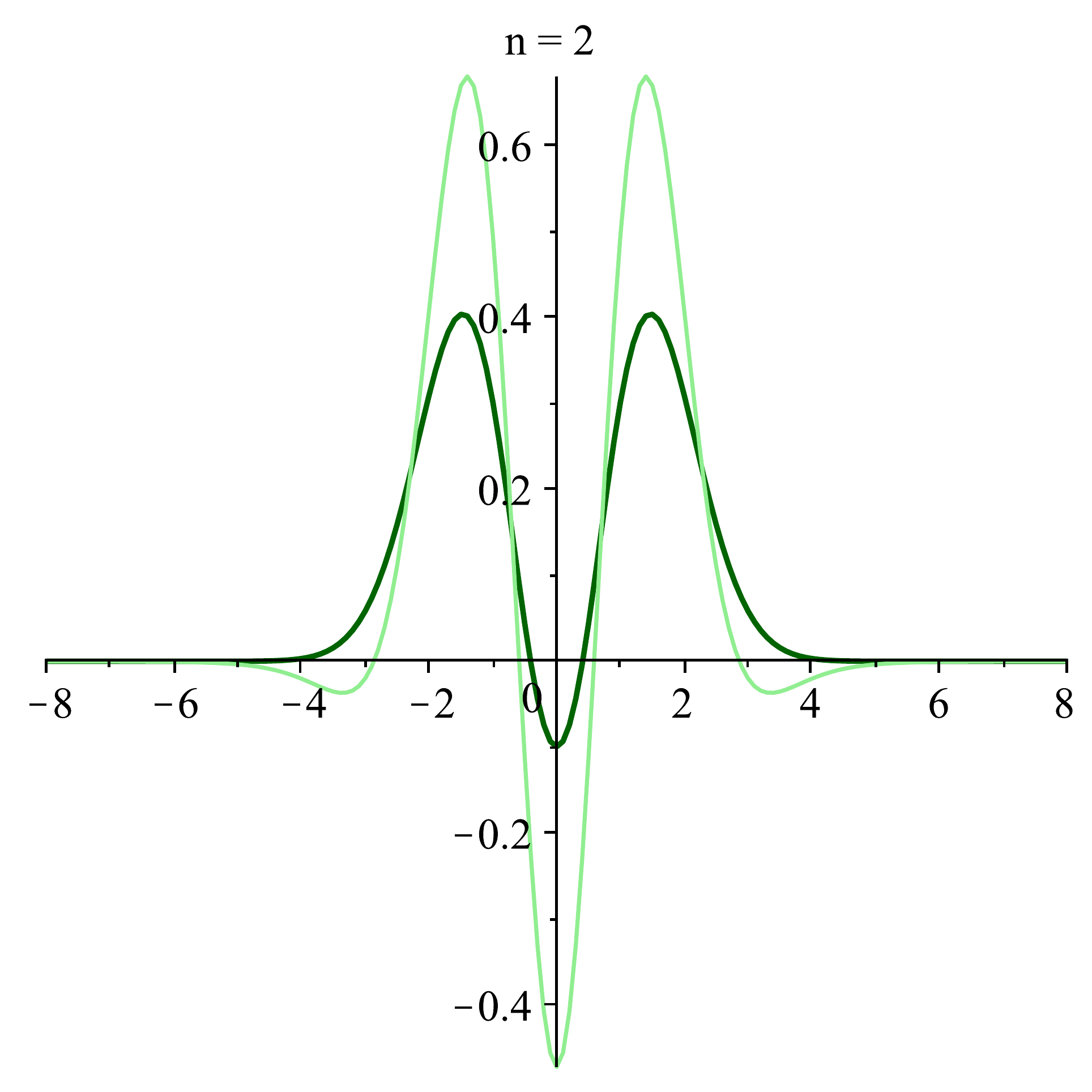}
  \includegraphics[width=120pt]{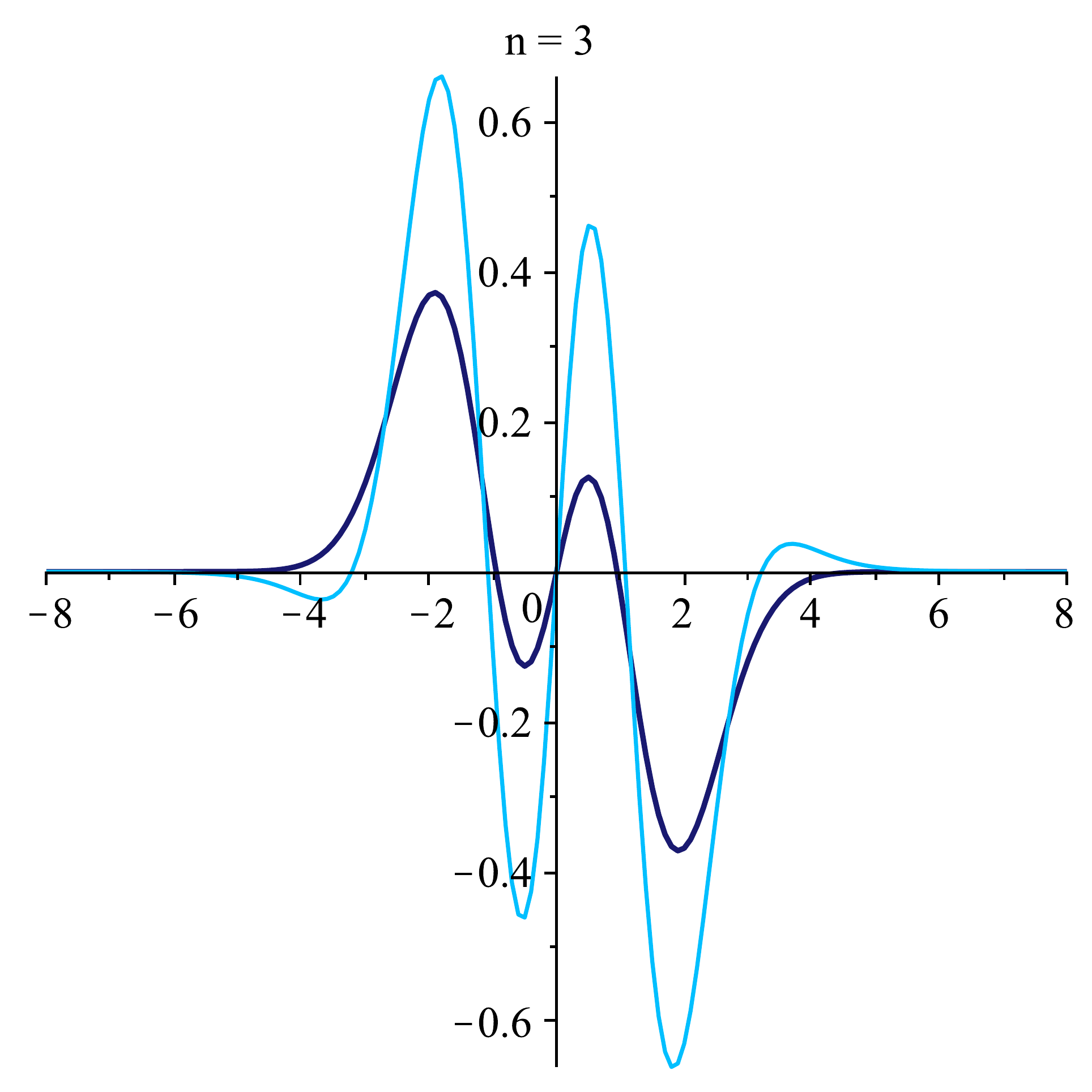}\hspace*{5pt}\includegraphics[width=120pt]{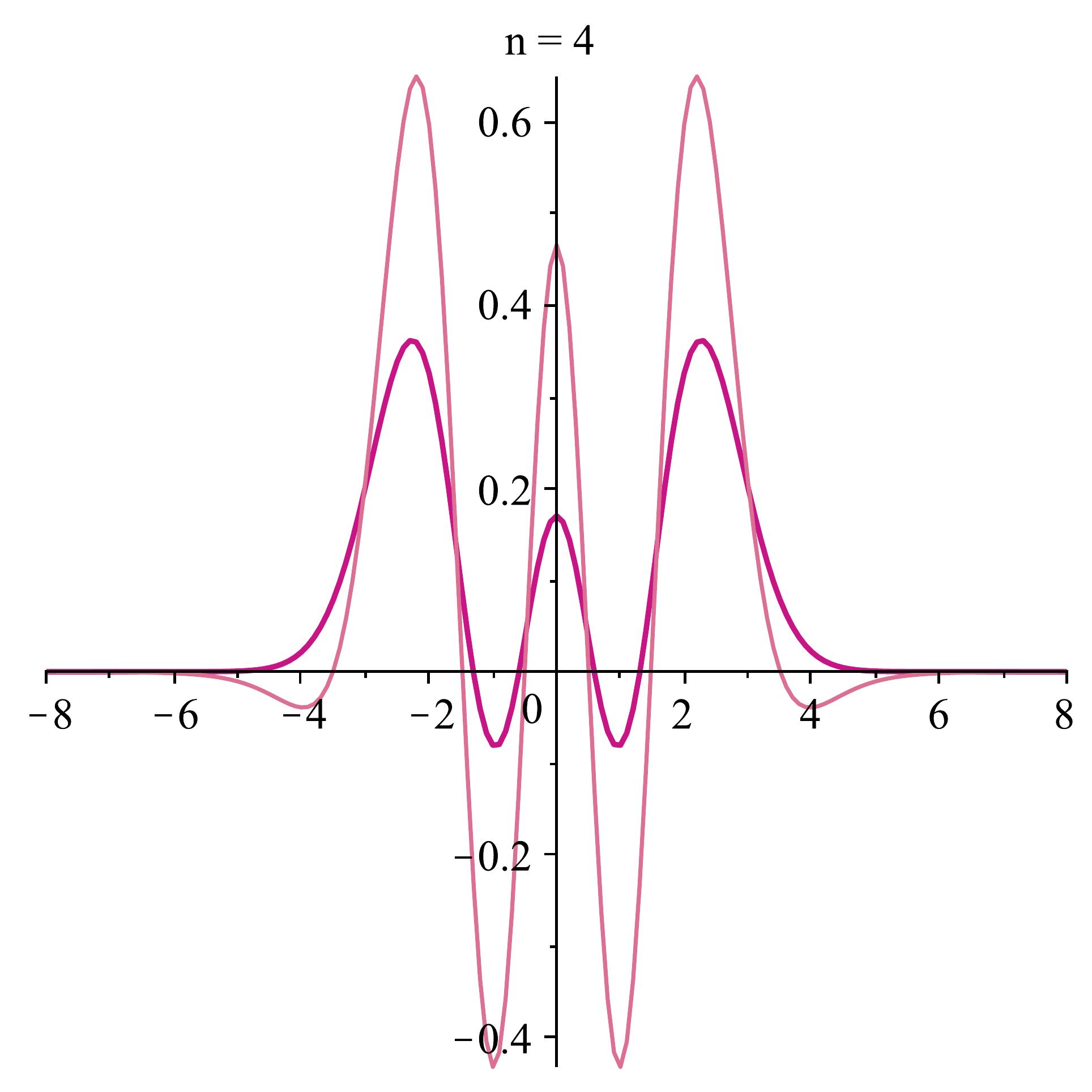}\hspace*{5pt}\includegraphics[width=120pt]{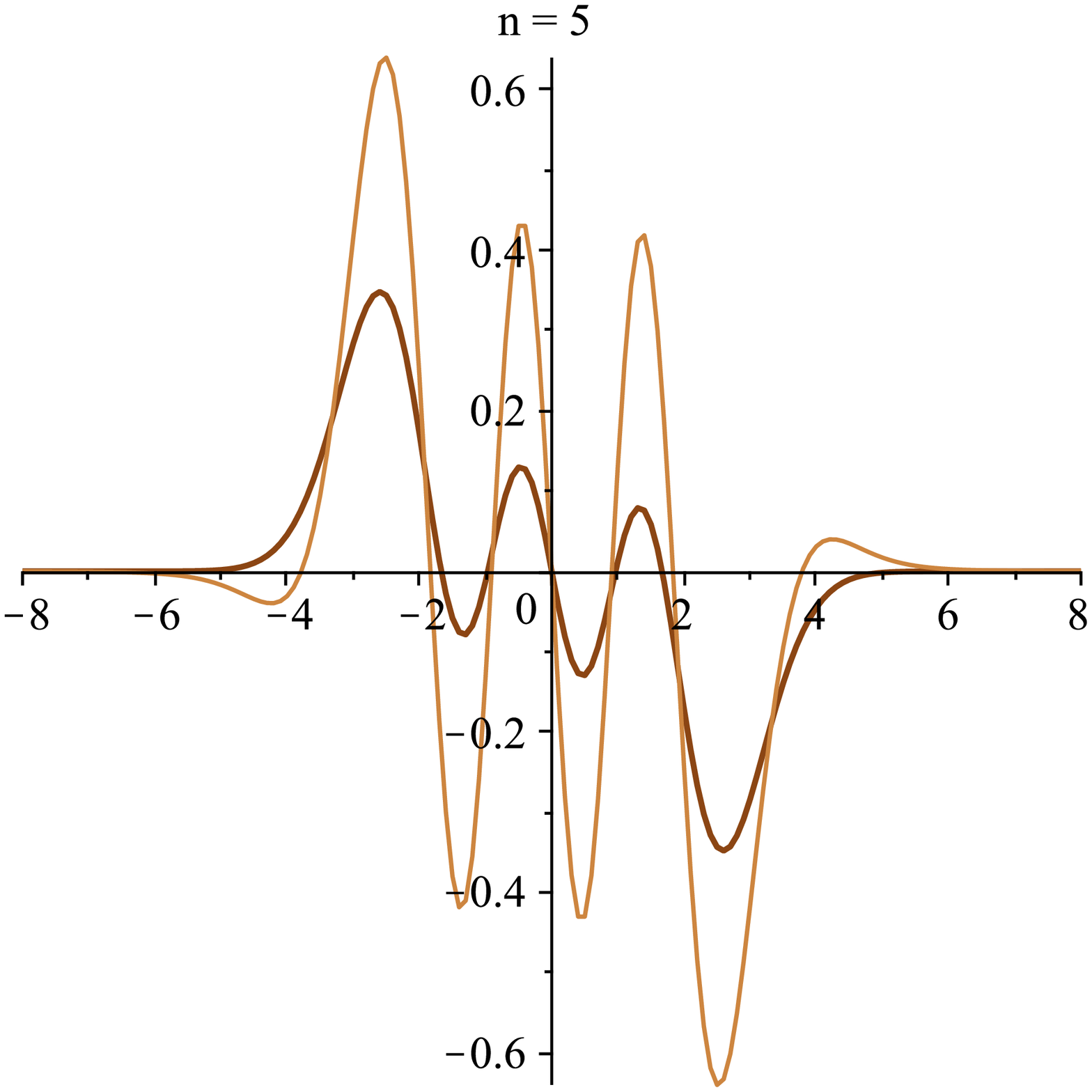}
\caption{The first six functions $\varphi_n^{[1]}$ with corresponding  functions $\varphi_n^{[0]}$, which are orthogonal in $\CC{L}_2(\BB{R})$, in darker shade.}
\label{fig:3.2}
\end{center}
\end{figure}

Fig.~\ref{fig:3.2} displays the above functions $\varphi_n^{[1]}$ and, in fainter colour, the functions $\varphi_n^{\langle0\rangle}$ based on the same weight $w(\xi)=(1+\xi^2)\ee^{-\xi^2}$ and defined by \R{eq:DefVarphi}. 

\begin{lemma}
 For every $n\in\BB{Z}_+$ we have $\varphi_n(x)=\lambda_n(x)\ee^{-x^2/2}$, where $\lambda_n$ is an $n$th-degree polynomial.
\end{lemma}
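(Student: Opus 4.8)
The plan is to exploit the classical fact that the inverse Fourier transform maps functions of the form (polynomial)$\,\times\ee^{-\xi^2/2}$ bijectively onto functions of the same form, preserving degree. For the Hermite weight $w(\xi)=\ee^{-\xi^2}$ we have $\sqrt{w(\xi)}=\ee^{-\xi^2/2}$, so by \eqref{eq:second_kind} the function $\varphi_n=\varphi_n^{[1]}$ equals $\ii^n/\sqrt{2\pi}$ times the inverse Fourier transform of $p_n^{[1]}(\xi)\ee^{-\xi^2/2}$, where $p_n^{[1]}$ is a polynomial of degree exactly $n$. It therefore suffices to show that this transform carries $p_n^{[1]}(\xi)\ee^{-\xi^2/2}$ to (degree-$n$ polynomial)$\,\times\ee^{-x^2/2}$.

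First I would record two elementary building blocks. The Gaussian is a fixed point of the transform,
\[
\frac{1}{\sqrt{2\pi}}\int_{-\infty}^\infty \ee^{\ii x\xi}\ee^{-\xi^2/2}\,\D\xi=\ee^{-x^2/2},
\]
and, since $\xi\,\ee^{\ii x\xi}=-\ii\,\partial_x\ee^{\ii x\xi}$, multiplication by $\xi$ on the frequency side corresponds to the operator $-\ii\,\D/\D x$ on the physical side. Hence, writing $p_n^{[1]}(\xi)=\sum_{k=0}^n a_k\xi^k$ with $a_n\neq0$ and interchanging sum and integral, one obtains
\[
\varphi_n(x)=\ii^n\sum_{k=0}^n a_k\left(-\ii\frac{\D}{\D x}\right)^{\!k}\ee^{-x^2/2}.
\]
Next I would invoke the Rodrigues-type identity $\frac{\D^k}{\D x^k}\ee^{-x^2/2}=(-1)^k\mathrm{He}_k(x)\ee^{-x^2/2}$, where $\mathrm{He}_k$ is the $k$th (probabilists') Hermite polynomial, of degree $k$. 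Substituting, the factor $\ee^{-x^2/2}$ comes out and $\varphi_n(x)=\lambda_n(x)\ee^{-x^2/2}$ with $\lambda_n(x)=\ii^n\sum_{k=0}^n a_k\ii^k\mathrm{He}_k(x)$, a polynomial.

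The only point needing a moment of care — and the closest thing to an obstacle — is that $\deg\lambda_n=n$ exactly, rather than something smaller through cancellation of the top-degree term. This is immediate: the degree-$n$ contribution to $\lambda_n$ comes solely from the $k=n$ summand, whose leading coefficient is $\ii^{2n}a_n=(-1)^n a_n\neq0$, while all lower $k$ contribute degree $<n$. As a by-product the leading coefficient is real, consistent with Remark~\ref{rem:real}. Since the argument uses only that $\sqrt{w}=\ee^{-\xi^2/2}$ and that $\deg p_n^{[1]}=n$, it applies verbatim to every member $\varphi_n^{[s]}$ of the Hermite cascade of the second kind.

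An alternative, entirely self-contained route avoids the Hermite identity by inducting on $n$ through the three-term recurrence \eqref{eq:3term} (with $c_n=0$ by Remark~\ref{rem:real}). The base case $\varphi_0=\lambda_0\ee^{-x^2/2}$ is clear from the constancy of $p_0^{[1]}$, and for the inductive step one writes $b_n\varphi_{n+1}=\varphi_n'+b_{n-1}\varphi_{n-1}$ and notes that $\varphi_n'=(\lambda_n'-x\lambda_n)\ee^{-x^2/2}$; since $\lambda_n'-x\lambda_n$ raises the degree by exactly one and $b_n\neq0$, solving for $\varphi_{n+1}$ yields $\lambda_{n+1}\ee^{-x^2/2}$ with $\deg\lambda_{n+1}=n+1$.
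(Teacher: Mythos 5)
Your proposal is correct and takes essentially the same route as the paper: the paper also reduces to the Fourier transforms $\sigma_k(x)=\frac{\ii^k}{\sqrt{2\pi}}\int_{-\infty}^\infty \xi^k \ee^{-\xi^2/2+\ii x\xi}\,\D\xi$ of monomials times the Gaussian and proves they are degree-$k$ polynomials times $\ee^{-x^2/2}$ by the induction $\alpha_{k+1}=\alpha_k'-x\alpha_k$, of which your Rodrigues/Hermite identity is simply the closed form. Your alternative argument via the recurrence \eqref{eq:3term} also appears in the paper, as the remark immediately following its proof.
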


\begin{proof}
 It is enough to prove that
 \begin{displaymath}
   \sigma_n(x)=\frac{\ii^n}{\sqrt{2\pi}} \int_{-\infty}^\infty \xi^n \ee^{-\xi^2/2+\ii x\xi}\D\xi,\qquad n\in\BB{Z}_+,
 \end{displaymath}
  is of the asserted form, i.e.\ an $n$th degree polynomial times $\ee^{-x^2/2}$. This follows readily by induction on $n$ from $\sigma_0(x)=\ee^{-x^2/2}$ and $\sigma_n'=\sigma_{n+1}$ because, letting $\sigma_n(x)=\alpha_n(x)\ee^{-x^2/2}$, we obtain $\alpha_{n+1}(x)=\alpha_n'(x)-x\alpha_n(x)$.
\end{proof}

Alternatively, substituting into \R{eq:3term}, it is easy to see that
\begin{displaymath}
  \lambda_n'(x)=-b_{n-1}\lambda_{n-1}(x)+x\lambda_n(x)+b_n\lambda_{n+1}(x),\qquad n\in\BB{Z}_+.
\end{displaymath}
The proof that $\lambda_n$ is an $n$th degree polynomial follows at once by induction on this differential recurrence, since $b_n>0$, $n\in\BB{N}$.

The bad news is that the $\lambda_n$s are not known and, as is trivial to verify, they do not obey a three-term recurrence relation (hence, by the Favard theorem, cannot be orthogonal with respect to any Borel measure). Having said so, the result is highly interesting. It has been proved in \cite{iserles19oss} that there exists a unique $\CC{L}_2$-orthonormal system on the real line which obeys \R{eq:TTR} and where each function is a polynomial multiple of the same $\CC{L}_2$ function, specifically Hermite functions (or $\varphi_n^{[0]}=\varphi_n^{\langle0\rangle}$ in present notation). The functions $\{\varphi_n^{[1]}\}_{n\in\bb{Z}_+}$, though, are $\CC{H}^1$-orthonormal, they obey \R{eq:TTR} and $\varphi_n^{[1]}(x)=\ee^{-x^2/2}\lambda_n(x)$. 

\begin{lemma}
 The only $\CC{H}_v(\BB{R})$-orthonormal systems (see equation \eqref{eqn:Hv}) with a tridiagonal differentiation matrix which are of the form $\varphi_n(x)=G(x)\lambda_n(x)$, $n\in\BB{Z}_+$, for some function $G\in\CC{L}_2(\BB{R})$, $G>0$ (and $G(0) = 1$ without loss of generality), where each $\lambda_n$ is a polynomial of degree $n$, correspond to
 \begin{equation}\label{eqn:G}
 	G(x) = \exp\left(-\gamma x^2 + \delta x\right)
 \end{equation}
for some constants $\gamma > 0$ and $\delta \in \BB{R}$. The corresponding weight of orthonormality for $P$ in Theorem \ref{thm:vanilla} is
\begin{equation}\label{eqn:Gw}
w(\xi) \propto v(\xi) \ee^{-\xi^2/(2\gamma)}.
\end{equation}
\end{lemma}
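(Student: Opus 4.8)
The plan is to treat the two assertions separately: first to pin down the form \eqref{eqn:G} of $G$ using only the three-term recurrence of criterion (A), and then to recover the weight \eqref{eqn:Gw} through the Fourier picture supplied by Theorem \ref{thm:vanilla}. For the form of $G$, I would substitute $\varphi_n(x) = G(x)\lambda_n(x)$ directly into \eqref{eq:3term} and divide through by $G$, which is legitimate since $G > 0$. This yields
\[
\frac{G'(x)}{G(x)}\lambda_n(x) + \lambda_n'(x) = -\overline{b_{n-1}}\lambda_{n-1}(x) + \ii c_n \lambda_n(x) + b_n \lambda_{n+1}(x), \qquad n \in \BB{Z}_+.
\]
The decisive case is $n=0$: since $\lambda_0$ is a nonzero constant and $\lambda_0' = 0$, we obtain $G'/G = \ii c_0 + b_0 \lambda_1/\lambda_0$. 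By hypothesis $\lambda_1$ has degree exactly $1$, and under Remark \ref{rem:bnpos} we may take $b_0 > 0$, so the right-hand side is a polynomial of degree exactly $1$; moreover $G'/G = (\log G)'$ is real because $G > 0$, so this polynomial is real with nonzero leading coefficient. Writing $G'/G = -2\gamma x + \delta$ with $\gamma,\delta \in \BB{R}$ and $\gamma \neq 0$, and integrating subject to $G(0) = 1$, gives $G(x) = \exp(-\gamma x^2 + \delta x)$. Finally, $G \in \CC{L}_2(\BB{R})$ forces the Gaussian to be square-integrable, i.e.\ $\gamma > 0$, which is \eqref{eqn:G}.

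For the weight I would invoke the integral representation \eqref{eqn:phinformula} guaranteed by Theorem \ref{thm:vanilla}. Since $p_0$ is constant, $\varphi_0$ is, up to a constant factor, the inverse Fourier transform of the mollifier $g$; on the other hand $\varphi_0 = \lambda_0 G$ is a constant multiple of $G$. Comparing the two, $g$ is a constant multiple of $\hat{G}$, so $|g(\xi)|^2 \propto |\hat{G}(\xi)|^2$. A direct Gaussian computation gives $\hat{G}(\xi) \propto \exp\!\big((\delta - \ii\xi)^2/(4\gamma)\big)$, whence $|\hat{G}(\xi)|^2 \propto \exp\!\big(\mathrm{Re}[(\delta-\ii\xi)^2]/(2\gamma)\big) = \exp\!\big((\delta^2-\xi^2)/(2\gamma)\big) \propto \exp(-\xi^2/(2\gamma))$. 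Substituting into the relation $w(\xi) = v(\xi)|g(\xi)|^2$ of Theorem \ref{thm:orthogonalPhi} then delivers \eqref{eqn:Gw}.

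To close the characterisation I would verify the converse: for any $\gamma > 0$ and $\delta \in \BB{R}$, taking $g = \hat{G}$ with $G = \exp(-\gamma x^2 + \delta x)$ and letting $P$ be orthonormal for $w \propto v\,\ee^{-\xi^2/(2\gamma)}$ produces, by Theorems \ref{thm:vanilla} and \ref{thm:orthogonalPhi}, an $\CC{H}_v(\BB{R})$-orthonormal system with tridiagonal differentiation matrix; that each resulting $\varphi_n$ genuinely factors as $G\lambda_n$ with $\deg\lambda_n = n$ follows because multiplication by $p_n(\xi)$ in frequency corresponds to applying the constant-coefficient differential operator $p_n(-\ii\,\mathrm{d}/\mathrm{d}x)$ to $G$, and each application of $\mathrm{d}/\mathrm{d}x$ to a polynomial multiple of $G$ raises the polynomial degree by exactly one through the linear factor $-2\gamma x + \delta$. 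I expect the main obstacle to lie in this last paragraph: one must track the Fourier-transform conventions carefully and observe that the factored form $\varphi_n = G\lambda_n$ pins the \emph{phase} of the mollifier (a linear phase when $\delta \neq 0$) rather than leaving it free as in Remark \ref{rem:mollifier}, so the canonical choice $\vartheta \equiv 0$ is generally \emph{not} the one that yields the clean factorisation.
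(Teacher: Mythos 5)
Your proposal is correct and follows essentially the same route as the paper's proof: deduce from the three-term recurrence (most cleanly at $n=0$) that $G'/G$ is a real linear polynomial, identify $g\propto\hat G$ via the $n=0$ case of the representation \eqref{eqn:phinformula} so that $|g(\xi)|^2\propto\ee^{-\xi^2/(2\gamma)}$ and hence $w=v|g|^2$ has the form \eqref{eqn:Gw}, and prove sufficiency by induction showing each derivative of a polynomial multiple of $G$ raises the polynomial degree by exactly one (the paper phrases this via $\rho_{n+1}=-\ii\rho_n'$, you via $p_n(-\ii\,\D/\D x)$ acting on $G$ --- the same computation). Your closing observation that the factorisation forces a linear phase on the mollifier when $\delta\neq 0$ is accurate and consistent with the paper's choice $g(\xi)=C\ee^{-(\xi-\ii\delta)^2/(4\gamma)}$.
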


\begin{proof}
 We substitute $\varphi_n(x)=G(x)\lambda_n(x)$ into \R{eq:3term}, bearing in mind that $G>0$, to obtain,
 \begin{displaymath}
    \lambda_n'(x)=-b_{n-1}\lambda_{n-1}(x)+\left(\ii c_n - \frac{G'(x)}{G(x)}\right)\lambda_n(x)+b_n\lambda_{n+1}(x),\qquad n\in\BB{Z}_+.
 \end{displaymath}
 Since $\deg\lambda_m=m$ by assumption, we deduce, comparing degrees, that $G'/G$ is a linear polynomial, and hence that $G(x)$ is the exponential of a quadratic polynomial. We can set the constant term in this quadratic to zero since $G(0) = 1$ without loss of generality, so we obtain equation \eqref{eqn:G}.
 
 Inverting the representation in Theorem \ref{thm:vanilla}, we have
 \begin{displaymath}
    p_n(\xi)g(\xi)=\frac{(-\ii)^n}{\sqrt{2\pi}}\int_{-\infty}^\infty \varphi_n(x)\ee^{-\ii x\xi}\D x=\frac{(-\ii)^n}{\sqrt{2\pi}}\int_{-\infty}^\infty \lambda_n(x)\ee^{-\gamma x^2\xi + \delta x-\ii x\xi}\D x.
  \end{displaymath}
The case $n = 0$ tells us that
\begin{equation}
	g(\xi) \propto \exp(- (\xi - \ii \delta)^2 / (4\gamma)).
\end{equation}
 Theorem~2 tells us that for $\CC{H}_v(\BB{R})$ orthonormality we require
  \begin{equation}
     \label{eq:Omega}
     w(\xi) = v(\xi)|g(\xi)|^2,
  \end{equation}
which completes the proof of necessity of the forms of $G$ and $w$.

Now we prove sufficiency. Let $w(\xi) = C^2 v(\xi) \ee^{-\xi^2 /2\gamma}$ where $C$ ensures $w$ has unit integral, $g(\xi) = C \ee^{-(\xi-\ii\delta)^2/(4\gamma)}$, and define $\Phi$ as in Theorem \ref{thm:vanilla}. By Theorem \ref{thm:orthogonalPhi}, $\Phi$ is an $\CC{H}_v(\BB{R})$-orthonormal system, so all that remains to prove is that $\varphi_n(x) = G(x)\lambda_n(x)$ where $\lambda_n$ is a polynomial of degree $n$. It is sufficient to show that $\rho_n(x) = \int_{-\infty}^\infty \xi^n g(\xi) \ee^{\ii x \xi} \, \D \xi$ is $G(x)$ times a polynomial of degree $n$, which can be readily shown by induction starting from $\rho_0(x) \propto G(x)$ and leveraging $\rho_{n+1} (x) = -\ii \rho_n'(x) $.
\end{proof}

\subsection{An $\CC{H}^\infty(\BB{R})$ system based on the Hermite weight}

Let $\sigma\in(0,1)$, $w(\xi)=\ee^{-\xi^2}$ (i.e.\ the standard Hermite weight) and $v(\xi)=\ee^{\sigma \xi^2}$, $\xi\in\BB{R}$. Therefore, by Theorem~3, the functions $\varphi_n$, as defined by \R{eqn:phinformula}, are orthogonal with respect to the infinite Sobolev inner product
\begin{equation}
  \label{eq:H_infty}
  \langle f,g\rangle_v=\sum_{\ell=0}^\infty \frac{\sigma^\ell}{\ell!} \int_{-\infty}^\infty f^{(\ell)}(x)g^{(\ell)}(x)\D x.
\end{equation}
In this case $p_n$s are scaled Hermite polynomials and $\varphi_n$s can be computed explicitly. 

\begin{theorem}
 The Hermite weight $w(\xi)=\ee^{-\xi^2}$, $x\in\BB{R}$, generates the $\CC{H}^\infty(\BB{R})$ system
 \begin{equation}
   \label{eq:H_infty_phi}
   \varphi_n^{[\infty]}(x)=\frac{1}{\sqrt{1+\sigma}} \left(\frac{1-\sigma}{1+\sigma}\right)^{\!n/2} \varphi_n^{[0]}\!\left(\frac{x}{\sqrt{1-\sigma^2}}\right) \!\exp\!\left(\frac{\sigma x^2}{2(1-\sigma^2)}\right)\!,\qquad n\in\BB{Z}_+,
\end{equation}
where $\varphi_n^{[0]}$ is the standard $n$th Hermite function.\end{theorem}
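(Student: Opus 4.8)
The plan is to reduce the claim to a single Gaussian--Fourier integral and evaluate it with the Hermite generating function. First I would invoke Theorem~\ref{thm:orthogonalPhi}: since $w(\xi)=\ee^{-\xi^2}$ and $v(\xi)=\ee^{\sigma\xi^2}$, the orthonormality measure for the underlying polynomials is $v(\xi)|g(\xi)|^2\D\xi=w(\xi)\D\xi$, so the $p_n$ are exactly the orthonormalised Hermite polynomials $\tilde{\CC{H}}_n$, while the canonical mollifier (Remark~\ref{rem:mollifier}) is $g(\xi)=\ee^{-(1+\sigma)\xi^2/2}$. Substituting into \eqref{eqn:phinformula} reduces the theorem to showing that
\begin{displaymath}
  \varphi_n^{[\infty]}(x)=\frac{\ii^n}{\sqrt{2\pi}}\,\frac{1}{\sqrt{2^n n!\sqrt\pi}}\int_{-\infty}^\infty \ee^{\ii x\xi}\CC{H}_n(\xi)\,\ee^{-(1+\sigma)\xi^2/2}\,\D\xi
\end{displaymath}
equals the stated expression, where $\CC{H}_n$ is the physicists' Hermite polynomial.

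Next I would package all $n$ at once. Writing $I_n(x)=\int_{-\infty}^\infty \ee^{\ii x\xi}\CC{H}_n(\xi)\ee^{-\beta\xi^2}\D\xi$ with $\beta=(1+\sigma)/2$, I multiply by $t^n/n!$, sum over $n$, and use $\sum_n \CC{H}_n(\xi)t^n/n!=\ee^{2\xi t-t^2}$ to turn the series into one Gaussian integral, which evaluates in closed form to
\begin{displaymath}
  \sum_{n=0}^\infty \frac{t^n}{n!}\,I_n(x)=\sqrt{\frac{2\pi}{1+\sigma}}\,\exp\!\left(-\frac{x^2}{2(1+\sigma)}\right)\exp\!\left(\frac{2\ii x}{1+\sigma}\,t+\frac{1-\sigma}{1+\sigma}\,t^2\right).
\end{displaymath}
The Gaussian decay justifies interchanging sum and integral, and the right-hand side is entire in $t$, so matching coefficients of $t^n$ is legitimate.

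The key manoeuvre is to recognise the $t$-dependent factor $\exp(at+bt^2)$, \emph{with a positive} coefficient $b=(1-\sigma)/(1+\sigma)$, as a rescaled Hermite generating function $\ee^{2yu-u^2}=\sum_n \CC{H}_n(y)u^n/n!$. Setting $u=ct$ forces $c^2=-b$, i.e.\ the imaginary scaling $c=\ii\sqrt{(1-\sigma)/(1+\sigma)}$, whence $a=2yc$ gives $y=x/\sqrt{1-\sigma^2}$, precisely the dilated argument in the target. Reading off $I_n(x)=\sqrt{2\pi/(1+\sigma)}\,\ee^{-x^2/(2(1+\sigma))}c^n\CC{H}_n(y)$ and reassembling the prefactor $\ii^n/\sqrt{2\pi}$, the phase collapses via $\ii^n c^n=(-1)^n((1-\sigma)/(1+\sigma))^{n/2}$, and the remaining factor $(-1)^n\CC{H}_n(y)/\sqrt{2^n n!\sqrt\pi}$ is exactly $\varphi_n^{[0]}(y)\ee^{y^2/2}$. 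Finally, combining $\ee^{y^2/2}=\ee^{x^2/(2(1-\sigma^2))}$ with $\ee^{-x^2/(2(1+\sigma))}$ yields the single exponential $\exp(\sigma x^2/(2(1-\sigma^2)))$, which completes the identification.

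I expect the only genuinely delicate point to be the complex substitution $u=ct$: one must treat the generating-function identity as an identity between entire functions (equivalently, analytically continue from real $u$) so that matching $t^n$-coefficients through an imaginary dilation is rigorous. Everything else --- the Gaussian integral, the arithmetic of the exponents, and the recognition of $\varphi_n^{[0]}$ --- is routine, and a useful sanity check is that $\sigma=0$ returns $\varphi_n^{[\infty]}=\varphi_n^{[0]}$.
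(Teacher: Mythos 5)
Your proposal is correct and follows essentially the same route as the paper: both sum the weighted Fourier integrals against $t^n/n!$, apply the Hermite generating function, evaluate the resulting Gaussian integral in closed form, and read the answer back off the generating function. The only difference is bookkeeping --- the paper absorbs the phase $\ii^n$ into the summed functions $\tilde{\varphi}_n$, so that the backward application of the generating function uses purely real arguments (the sign being handled by Hermite parity, $\CC{H}_n(-y)=(-1)^n\CC{H}_n(y)$), whereas your arrangement leaves a positive coefficient of $t^2$ and forces the imaginary dilation $u=\ii\sqrt{(1-\sigma)/(1+\sigma)}\,t$, the one delicate point, which you correctly identify and justify via an identity between entire functions.
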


\begin{proof}
Let
\begin{displaymath}
  \tilde{\varphi}_n(x)=\frac{\ii^n}{\sqrt{2\pi}} \int_{-\infty}^\infty \CC{H}_n(\xi)\ee^{-\frac12(1+\sigma)\xi^2+\ii x\xi}\D\xi,\qquad n\in\BB{Z}_+,
\end{displaymath}
whereby, orthonormalising Hermite polynomials,  \R{eqn:phinformula} yields $\varphi_n(x)=\tilde{\varphi}_n(x)/\sqrt{2^nn!\sqrt{\pi}}$. Using the standard generating function for Hermite polynomials,
\begin{Eqnarray*}
  \sum_{n=0}^\infty \frac{\tilde{\varphi}_n(x)}{n!}t^n&=&\frac{1}{\sqrt{2\pi}} \int_{-\infty}^\infty \left[\sum_{n=0}^\infty \frac{\CC{H}_n(\xi)}{n!} (\ii t)^n\right]\! \exp\!\left(-\frac{1+\sigma}{2}\xi^2+\ii x\xi\right)\!\D\xi\\
  &=&\frac{1}{\sqrt{2\pi}} \int_{-\infty}^\infty \exp\!\left(2\ii\xi t+t^2-\frac{1+\sigma}{2}\xi^2+\ii x\xi\right)\!\D\xi \\
  &=&\frac{1}{\sqrt{1+\sigma}} \exp\!\left(-\frac{x^2}{2(1+\sigma)}\right)\! \exp\!\left(-\frac{2xt}{1+\sigma} -\frac{1-\sigma}{1+\sigma}t^2\right)\\
  &=&\frac{1}{\sqrt{1+\sigma}} \exp\!\left(-\frac{x^2}{2(1+\sigma)}\right)\!\exp\!\left(-\frac{2x}{\sqrt{1-\sigma^2}} \left(\sqrt{\frac{1\!-\!\sigma}{1\!+\!\sigma}} t\right) -\left(\sqrt{\frac{1\!-\!\sigma}{1\!+\!\sigma}} t\right)^{\!2}\right)
\end{Eqnarray*}
and, using the same generating function in the opposite direction, 
\begin{Eqnarray*}
  \sum_{n=0}^\infty \frac{\tilde{\varphi}_n(x)}{n!}t^n&=&\frac{1}{\sqrt{1+\sigma}} \exp\!\left(-\frac{x^2}{2(1+\sigma)}\right)\!\sum_{n=0}^\infty \frac{1}{n!} \CC{H}_n\!\left(-\frac{x}{\sqrt{1-\sigma^2}}\right)\!\left(\sqrt{\frac{1-\sigma}{1+\sigma}}t\right)^{\!n}.
\end{Eqnarray*}
Therefore
\begin{displaymath}
  \tilde{\varphi}_n(x)=\frac{(-1)^n}{\sqrt{1+\sigma}} \left(\frac{1-\sigma}{1+\sigma}\right)^{\!n/2} \CC{H}_n\!\left(\frac{x}{\sqrt{1-\sigma^2}}\right)\!\exp\!\left(-\frac{x^2}{2(1+\sigma)}\right)\!.
\end{displaymath}
Normalising,
\begin{Eqnarray*}
  \varphi_n^{[\infty]}(x)&=&\frac{(-1)^n}{\sqrt{(1+\sigma)2^nn!\sqrt{\pi}}} \left(\frac{1-\sigma}{1+\sigma}\right)^{\!n/2} \CC{H}_n\!\left(\frac{x}{\sqrt{1-\sigma^2}}\right)\!\exp\!\left(-\frac{x^2}{2(1+\sigma)}\right)\!,\\
  &=&\frac{1}{\sqrt{1+\sigma}} \left(\frac{1-\sigma}{1+\sigma}\right)^{\!n/2} \varphi_n^{[0]}\!\left(\frac{x}{\sqrt{1-\sigma^2}}\right) \!\exp\!\left(\frac{\sigma x^2}{2(1-\sigma^2)}\right)\!,\qquad n\in\BB{Z}_+,
\end{Eqnarray*}
and \R{eq:H_infty_phi} is true.
\end{proof}

\begin{figure}[htb]
\begin{center}
  \includegraphics[width=120pt]{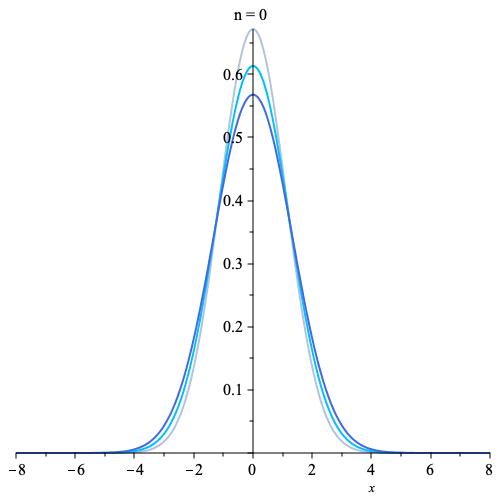}\hspace*{5pt}\includegraphics[width=120pt]{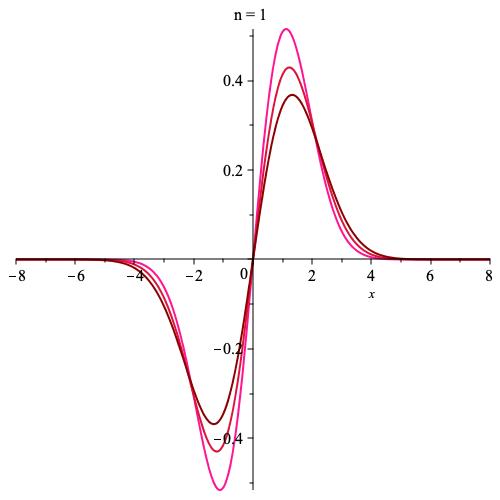}\hspace*{5pt}\includegraphics[width=120pt]{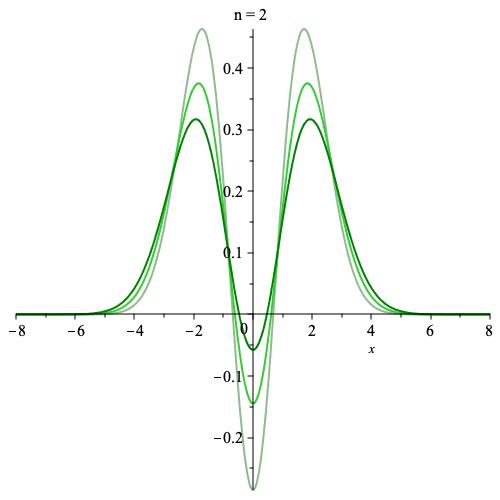}
  \includegraphics[width=120pt]{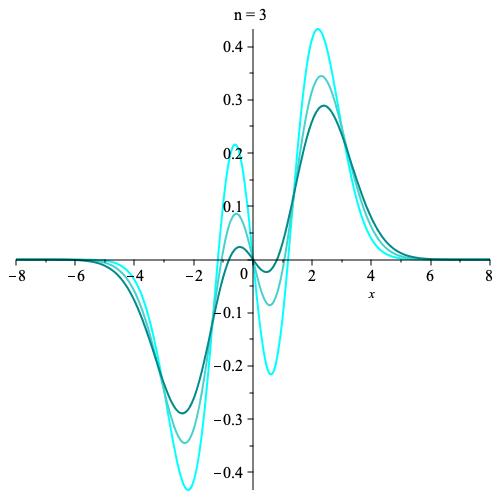}\hspace*{5pt}\includegraphics[width=120pt]{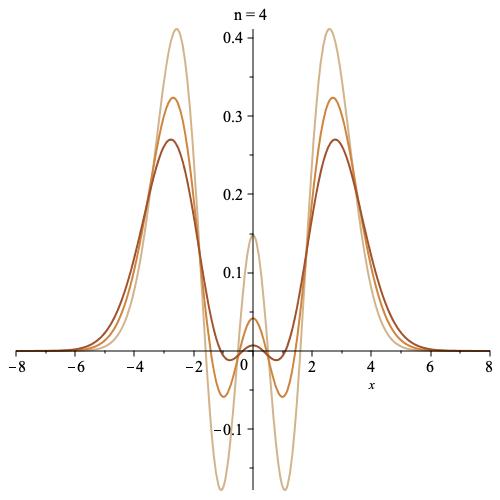}\hspace*{5pt}\includegraphics[width=120pt]{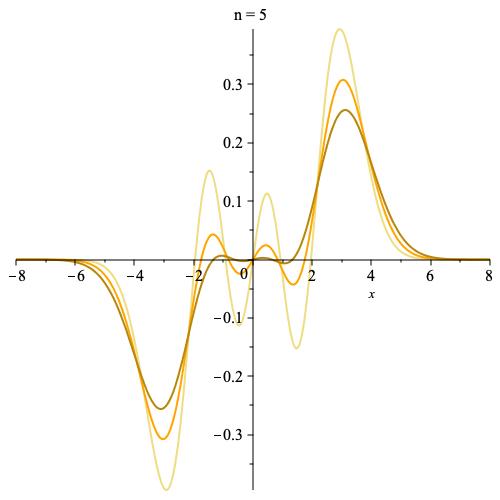}
\caption{The first six $\CC{H}^\infty$ Hermite-type functions $\varphi_n^{[\infty]}$ for $\sigma=\frac14,\frac12,\frac34$ in progressively darker hues.}
\label{fig:Hinf}
\end{center}
\end{figure}

Fig.~\ref{fig:Hinf} displays the functions $\varphi_n^{[\infty]}$, $n=0,\ldots,5$, for three different values of $\sigma\in(0,1)$. The zeros of $\varphi_n$ are scaled zeros of a Hermite polynomial and, the scaling being monotone, the zeros are  `squeezed' in a uniform manner for increasing $\sigma$, as evident in the figure.

\setcounter{equation}{0}
\setcounter{figure}{0}
\section{Bilateral Laguerre-type weights}

Deferring the standard Laguerre weight (which is not symmetric) to Section~7, we let $w(\xi)=(1+\xi^2)\ee^{-|\xi|}$, $\xi\in\BB{R}$. Note that the underlying orthogonal polynomials are unknown explicitly, yet can be computed.
The $\varphi_n^{[1]}$s are
\begin{Eqnarray*}
  \varphi_0^{[1]}(x)&=&\frac{2}{\sqrt{3}\sqrt{\pi}} \frac{1}{1+4x^2},\\
  \varphi_1^{[1]}(x)&=&\frac{16}{\sqrt{26}\sqrt{\pi}} \frac{x}{(1+4x^2)^2},\\
  \varphi_2^{[1]}(x)&=&\frac{2}{\sqrt{1167}\sqrt{\pi}} \frac{1+248x^2+208x^4}{(1+4x^2)^3},\\
  \varphi_3^{[1]}(x)&=&\frac{16}{\sqrt{23179}\sqrt{\pi}} \frac{-21x+456x^3+496x^5}{(1+4x^2)^4},\\
  \varphi_4^{[1]}(x)&=&\frac{2}{\sqrt{309347971}\sqrt{\pi}} \frac{2925-128784x^2+1703264x^4+3029760x^6+1369344x^8}{(1+4x^2)^5},\\
  \varphi_5^{[1]}(x)&=&\frac{16}{\sqrt{22678864934}\sqrt{\pi}} \frac{25749x\!-\!1017424x^3\!+\!5715040x^5\!+\!13510400x^7\!+\!7744768x^9}{(1+4x^2)^6}.
\end{Eqnarray*}
The general formula is a polynomial of degree $2n-[1-(-1)^n]/2$ in $x$ (of the same parity as $n$), divided by $(1+4x^2)^{n+1}$. This can be easily verified because by \R{eqn:phinformula} and $g(\xi)=\ee^{-|\xi|/2}$ each $\varphi_n^{[1]}$ is a linear combination of $\lambda_n,\lambda_{n-2},\ldots$, where
\begin{displaymath}
  \lambda_n(x)=\frac{\ii^n}{\sqrt{2\pi}} \int_{-\infty}^\infty \xi^n \ee^{-|\xi|^2+\ii x\xi}\D\xi,\qquad n\in\BB{Z}_+
\end{displaymath}
and $\lambda_n'(x)=\lambda_{n+1}(x)$ implies that
\begin{displaymath}
  \lambda_n(x)=\lambda^{(n)}_0(x)=\frac{2\sqrt{2}}{\sqrt{\pi}} \frac{\D^n}{\D x^n} \frac{1}{1+4x^2},\qquad n\in\BB{Z}_+.
\end{displaymath}

\setcounter{equation}{0}
\setcounter{figure}{0}
\section{Bessel-like functions}

\subsection{Transformation of Chebyshev polynomials}

We rewrite  \R{eq:ChebyIntegral} in the form \R{eqn:phinformula}, 
\begin{displaymath}
  \varphi_n(x)=\frac{\ii^n}{\sqrt{2\pi}} \int_{-1}^1 \tilde{\CC{T}}_n(\xi) \ee^{\ii x\xi} \frac{\D\xi}{\sqrt{1-\xi^2}} =(-1)^n \CC{J}_n(x),
\end{displaymath}
where $\tilde{\CC{T}}_n=\sqrt{2/\pi}\CC{T}_n$ (except that $\tilde{\CC{T}}_0\equiv \CC{T}_0/\sqrt{\pi}$) are orthonormal Chebyshev polynomials. This does not fit within the framework of Theorem~2, because   $v(\xi)=(1-\xi^2)^{-3/2}\chi_{(-1,1)}(\xi)$ (derived using \R{eqn:phinformula}) is not $\CC{L}_2$. Moreover, it is easy to verify directly that the $\varphi_n$s cannot be bounded in {\em any\/} Sobolev norm because the Weber--Schafheitlin formula \cite[10.22.57]{dlmf} implies that for $\Re\lambda>0$
\begin{displaymath}
  \int_{-\infty}^\infty \frac{\varphi_n^2(x)\D x}{x^\lambda}=\int_{-\infty}^\infty \frac{\CC{J}_n^2(x)\D x}{x^\lambda}=\frac{\CC{\Gamma}(n+\frac12)\CC{\Gamma}(\lambda)}{2^{\lambda-1} \CC{\Gamma}^2(\frac12\lambda+\frac12) \CC{\Gamma}(\frac12\lambda+n+\frac12)}\stackrel{\lambda\rightarrow0}{\longrightarrow}\infty.
\end{displaymath}

If instead of a Chebyshev measure we use the Legendre measure, $w(\xi)=\chi_{(-1,1)}(\xi)$, the state of affairs is different: $g(\xi)=\chi_{(-1,1)}$ results in
\begin{equation}
  \label{eq:Besselj}
  \varphi_n(x)=(-1)^n\sqrt{\frac{n+\frac12}{x}} \CC{J}_{n+\frac12}(x),\qquad x\in\BB{R},
\end{equation}
and the $\varphi_n$s are integrable on $\BB{R}$. 

\subsection{The Legendre weight}

The most obvious example of an $\CC{H}^1(\BB{R})$ system is based on the Legendre weight $w(\xi)=\chi_{(-1,1)}(\xi)$, in which the orthonormal polynomials are $p_n=\sqrt{n+\frac12}\CC{P}_n$. Thus,
\begin{displaymath}
  \varphi_n^{\langle1\rangle}(x)=\frac{\ii^n}{\sqrt{2\pi}}\sqrt{n+\frac12} \int_{-1}^1 \frac{\CC{P}_n(\xi)}{\sqrt{1+\xi^2}} \ee^{\ii x\xi}\D\xi,\qquad n\in\BB{Z}_+.
\end{displaymath}

While $\{\varphi_n^{\langle1\rangle}\}_{n\in\bb{Z}_+}$ is orthonormal in $\CC{H}^1(\BB{R})$, it is not a complete basis because all Fourier spectra are restricted to $[-1,1]$, yet it might be of an independent interest. Perhaps more vexing issue is that above integrals are not available in an explicit form. This is not an insurmountable problem in the computation of the $\varphi_n^{\langle1\rangle}$s which we can compute for individual values of $x$ using a Fast Fourier Transform \cite{townsend2018fast,olver20fau}, although it presents an obvious obstacle to their analysis. 

\begin{figure}[htb]
\begin{center}
  \includegraphics[width=120pt]{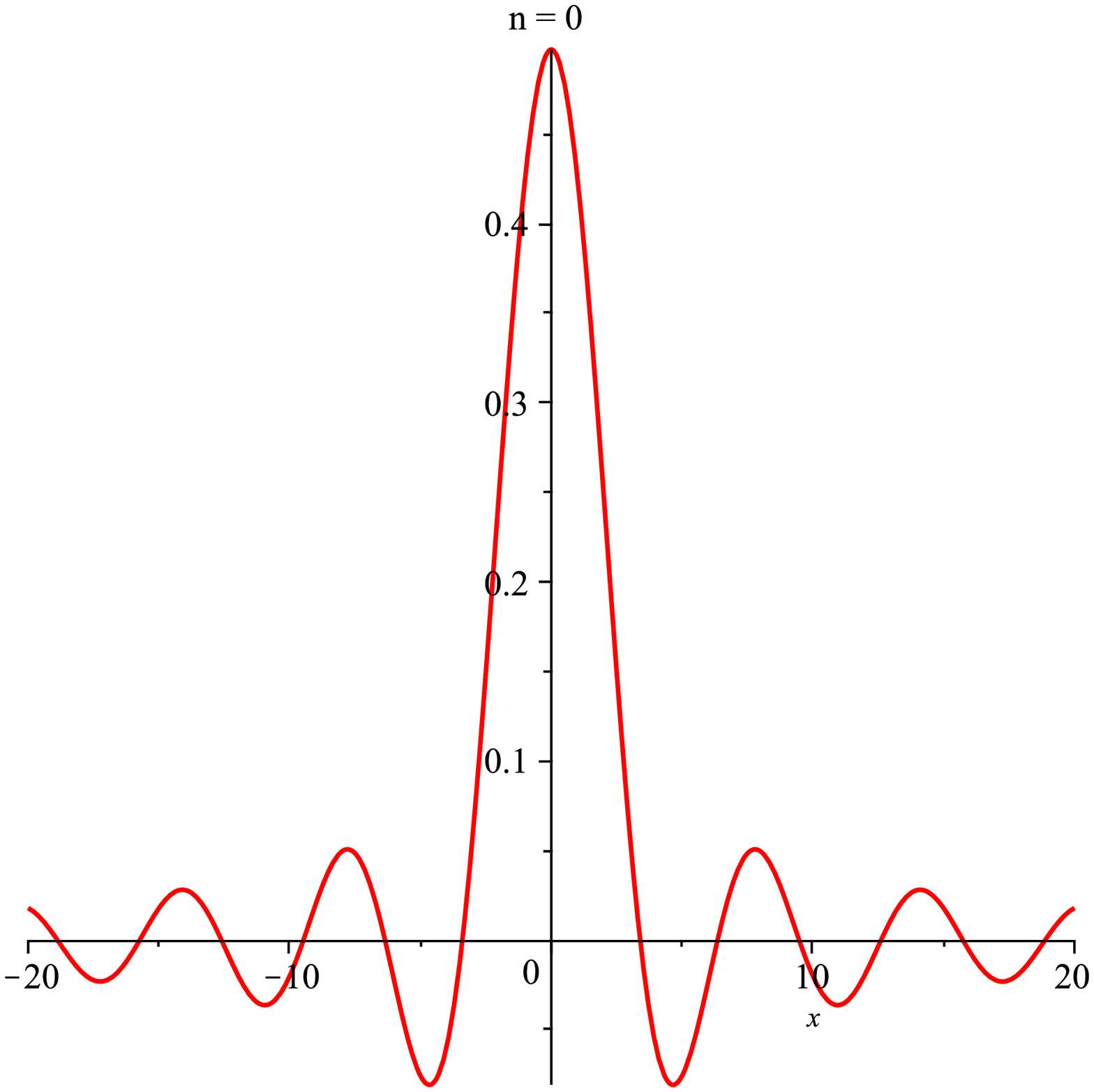}\hspace*{5pt}\includegraphics[width=120pt]{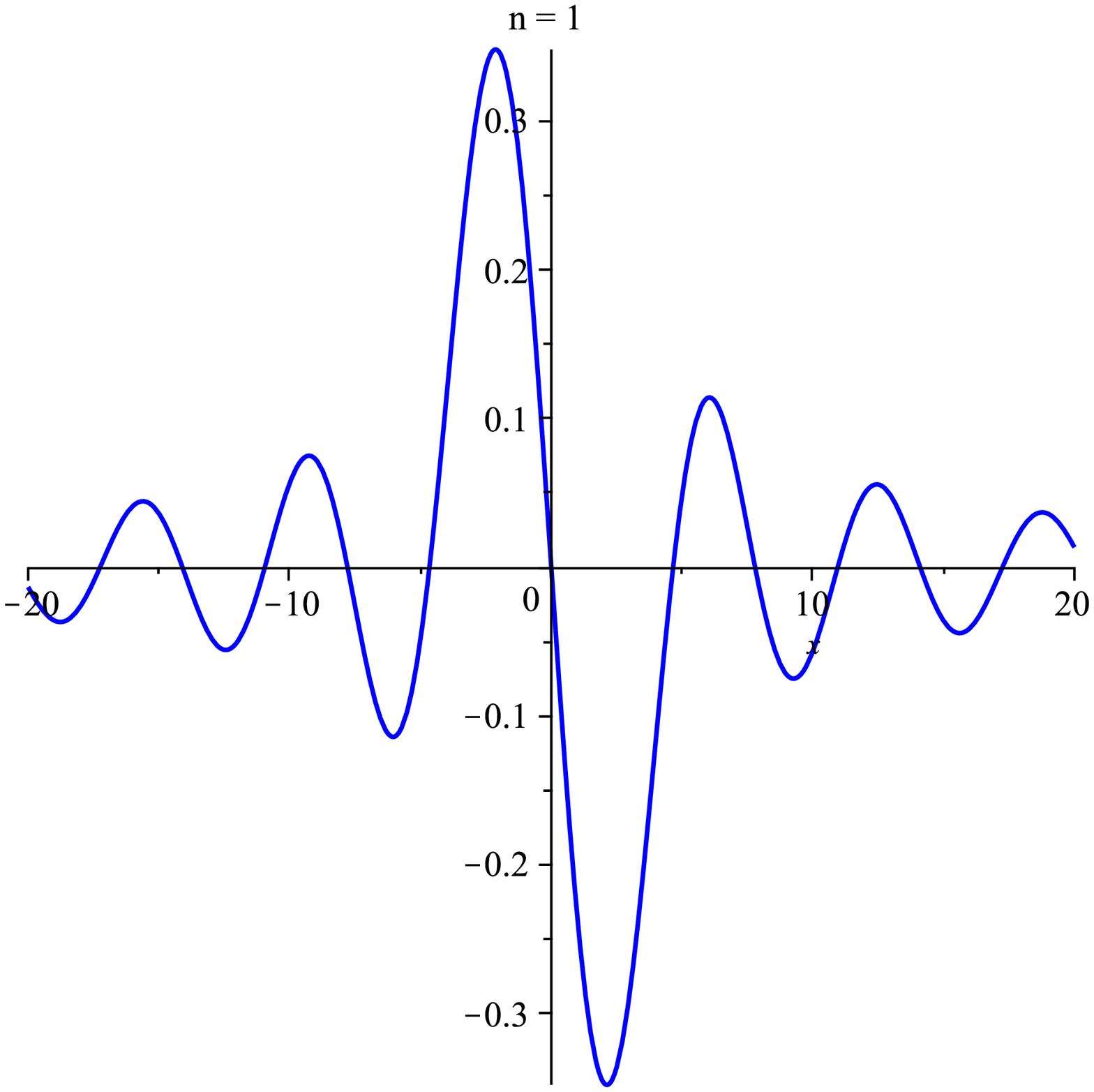}\hspace*{5pt}\includegraphics[width=120pt]{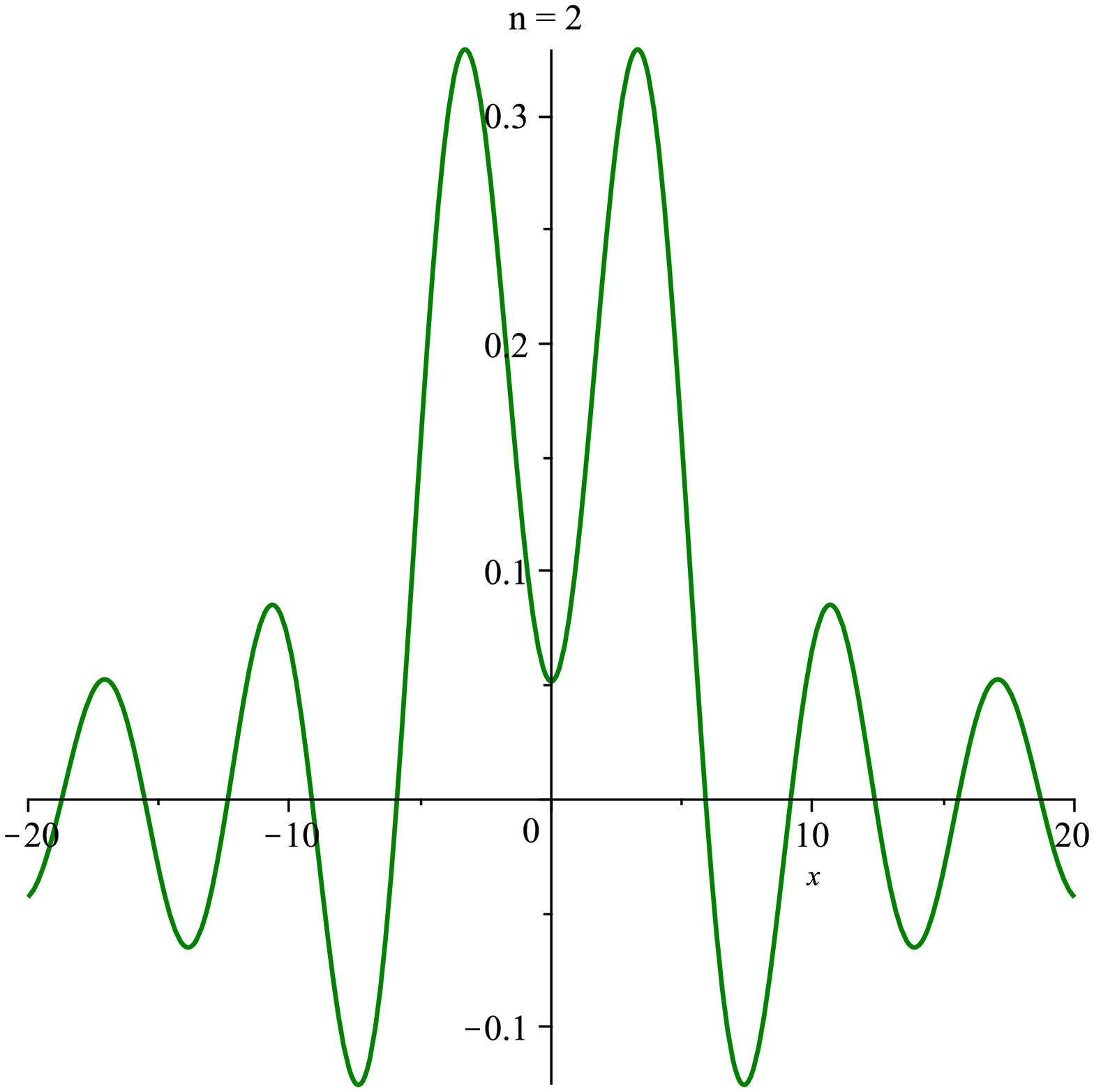}
  \includegraphics[width=120pt]{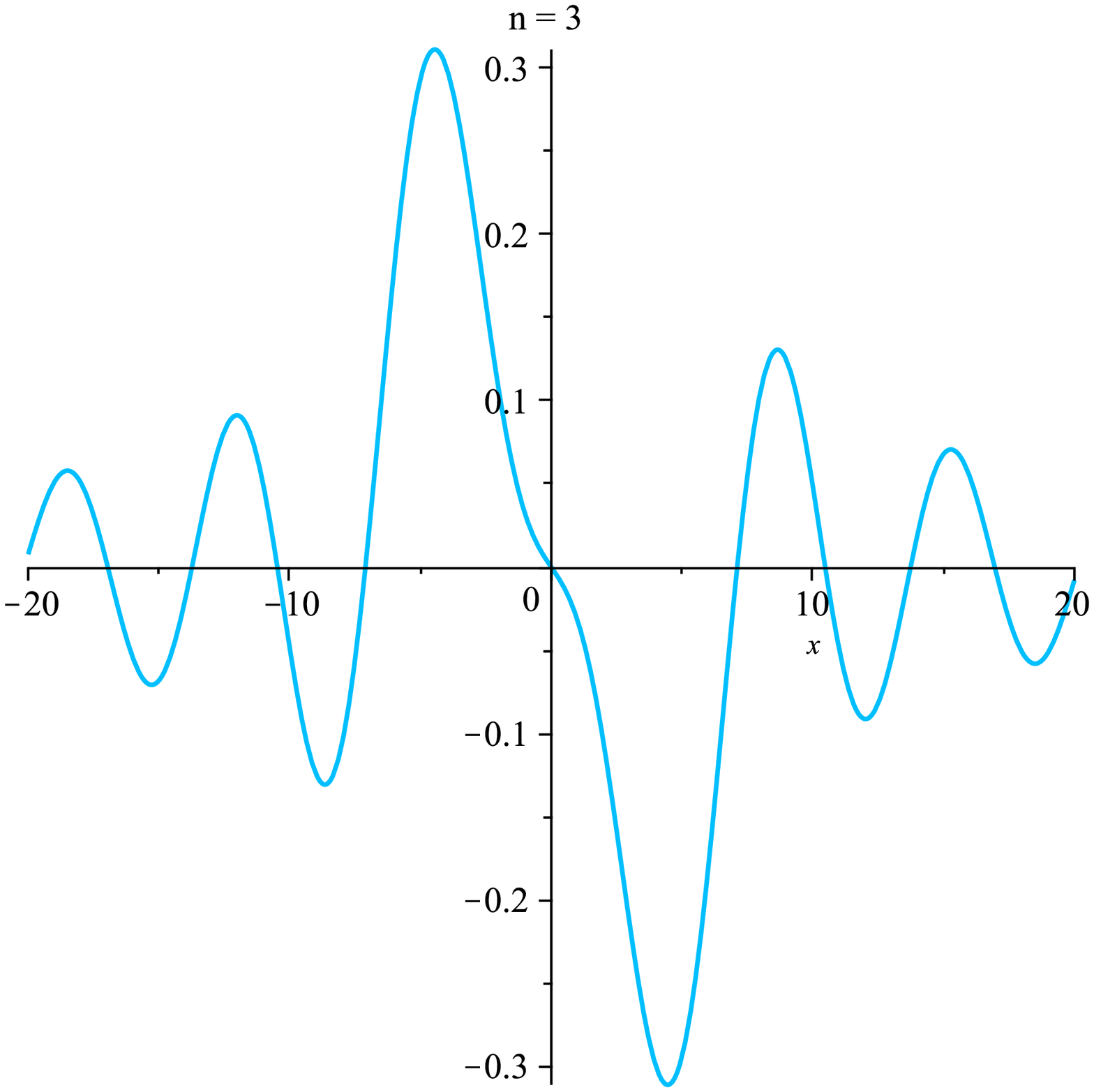}\hspace*{5pt}\includegraphics[width=120pt]{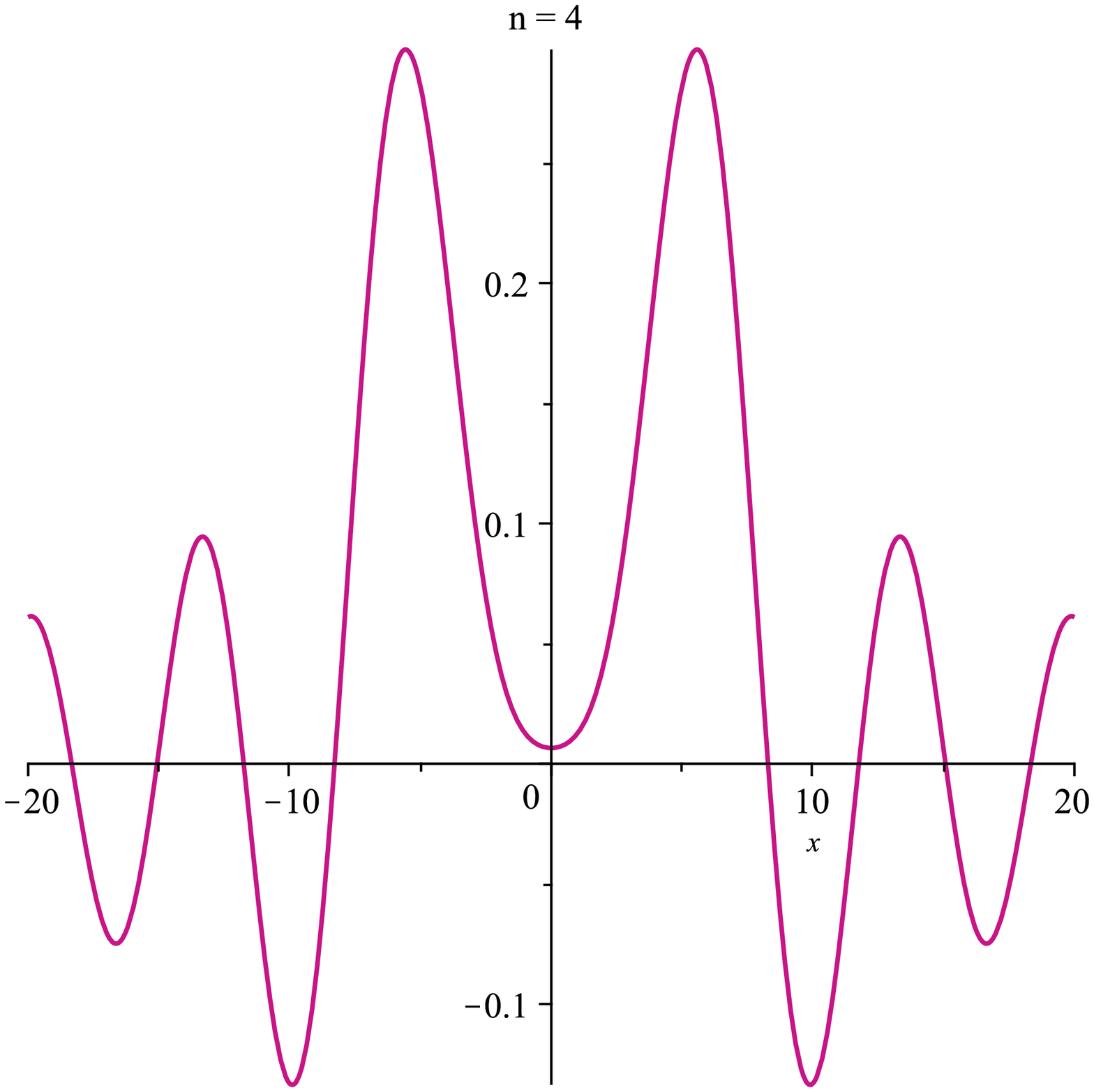}\hspace*{5pt}\includegraphics[width=120pt]{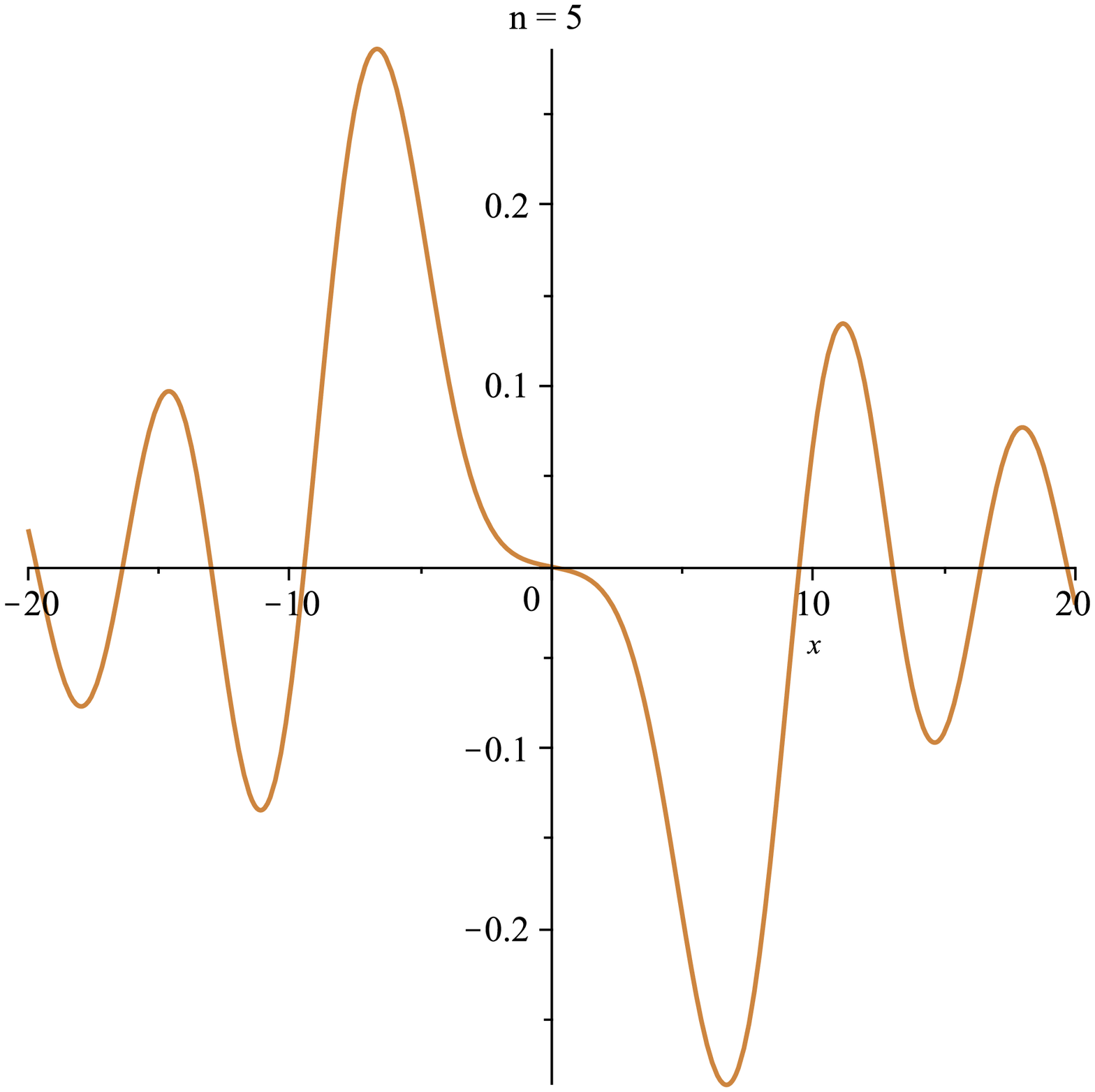}
\caption{The first six functions $\varphi_n^{\langle1\rangle}$ for the Legendre weight.}
\label{fig:3.3}
\end{center}
\end{figure}

In Fig.~\ref{fig:3.3} we have computed $\varphi_0^{\langle1\rangle},\ldots,\varphi_5^{\langle1\rangle}$ numerically. Like other transformed functions \R{eq:DefVarphi} or \R{eqn:phinformula}, the $\varphi_n^{\langle1\rangle}$s seem to be endowed with a wealth of structural features and regularities which have been discussed briefly (for \R{eq:DefVarphi}) in \cite{Iserles21daf} but overall are a subject for future research.

\subsection{Sobolev--Legendre cascades}

We revisit the essence of Subsections 3.1.1--2, except that the range of integration is now $[-1,1]$.  Firstly, we  set $w=\chi_{(-1,1)}$, let $p_n$ be the (orthonormalised) Legendre polynomials and, for every $s\in\BB{Z}_+$ set 
\begin{equation}
  \label{eq:SL1st}
  \varphi_n^{\langle s \rangle}(x)=\frac{\ii^n}{\sqrt{2\pi}} \int_{-1}^1 p_n(\xi) \left(\sum_{\ell=0}^s \xi^{2\ell}\!\right)^{\!-1/2} \ee^{\ii x\xi}\D\xi,\qquad n\in\BB{Z}_+.
\end{equation}
Secondly, we might define $w_s(\xi)=\chi_{(-1,1)}(\xi)\sum_{\ell=0}^s \xi^{2\ell}$, $s\in\BB{Z}_+$, and set 
\begin{equation}
  \label{eq:SL2nd}
  \varphi_n^{[s]}(x)=\frac{\ii^n}{\sqrt{2\pi}} \int_{-1}^1 p^{[s]}_n(\xi) \ee^{\ii x\xi}\D\xi,\qquad n\in\BB{Z}_+,
\end{equation}
where $\{p_n^{[s]}\}_{n\in\bb{Z}_+}$ is the orthonormal polynomial system corresponding to the weight $w_s$. It follows at once from Theorem~2 that
\begin{displaymath}
  \sum_{\ell=0}^s \int_{-\infty}^\infty \frac{\D^\ell \varphi_m^{\langle s \rangle}(x)}{\D x^\ell} \frac{\D^\ell \varphi_n^{\langle s \rangle}(x)}{\D x^\ell} \D x=\sum_{\ell=0}^s \int_{-\infty}^\infty \frac{\D^\ell \varphi_m^{[s]}(x)}{\D x^\ell} \frac{\D^\ell \varphi_n^{[s]}(x)}{\D x^\ell} \D x=\delta_{m,n}
\end{displaymath}
for all $m,n\in\BB{Z}_+$ and both $\{\varphi_n^{\langle s \rangle}\}_{n\in\bb{Z}_+}$ and $\{\varphi_n^{[s]}\}_{n\in\bb{Z}_+}$ are orthonormal sets in $\CC{H}^s(\BB{R})$. Of course, neither is dense in the Sobolev space because their Fourier spectra are restricted to $[-1,1]$ -- they are dense in an obvious generalisation of Paley--Wiener spaces to the realm of Sobolev spaces. The systems \R{eq:SL1st} and \R{eq:SL2nd} are the {\em Sobolev--Legendre cascades\/} of {\em the first\/} and {\em the second kind,\/} respectively. 

We recall a major practical difference between the two cascades: except for the case $s=0$ (when $\varphi^{\langle0\rangle}_n=\varphi^{[0]}_n$ has been given in \R{eq:Besselj}), $\varphi_n^{\langle s \rangle}$ is unknown in an explicit form while $\varphi_n^{[s]}$, being an integral in $[-1,1]$ of a polynomial times $\ee^{\ii x\xi}$, can be computed at great ease and is a linear combination of spherical Bessel functions. Consequently, in the sequel we focus on the Sobolev--Lagendre cascade of the second kind. 

\begin{figure}[htb]
\begin{center}
  \includegraphics[width=120pt]{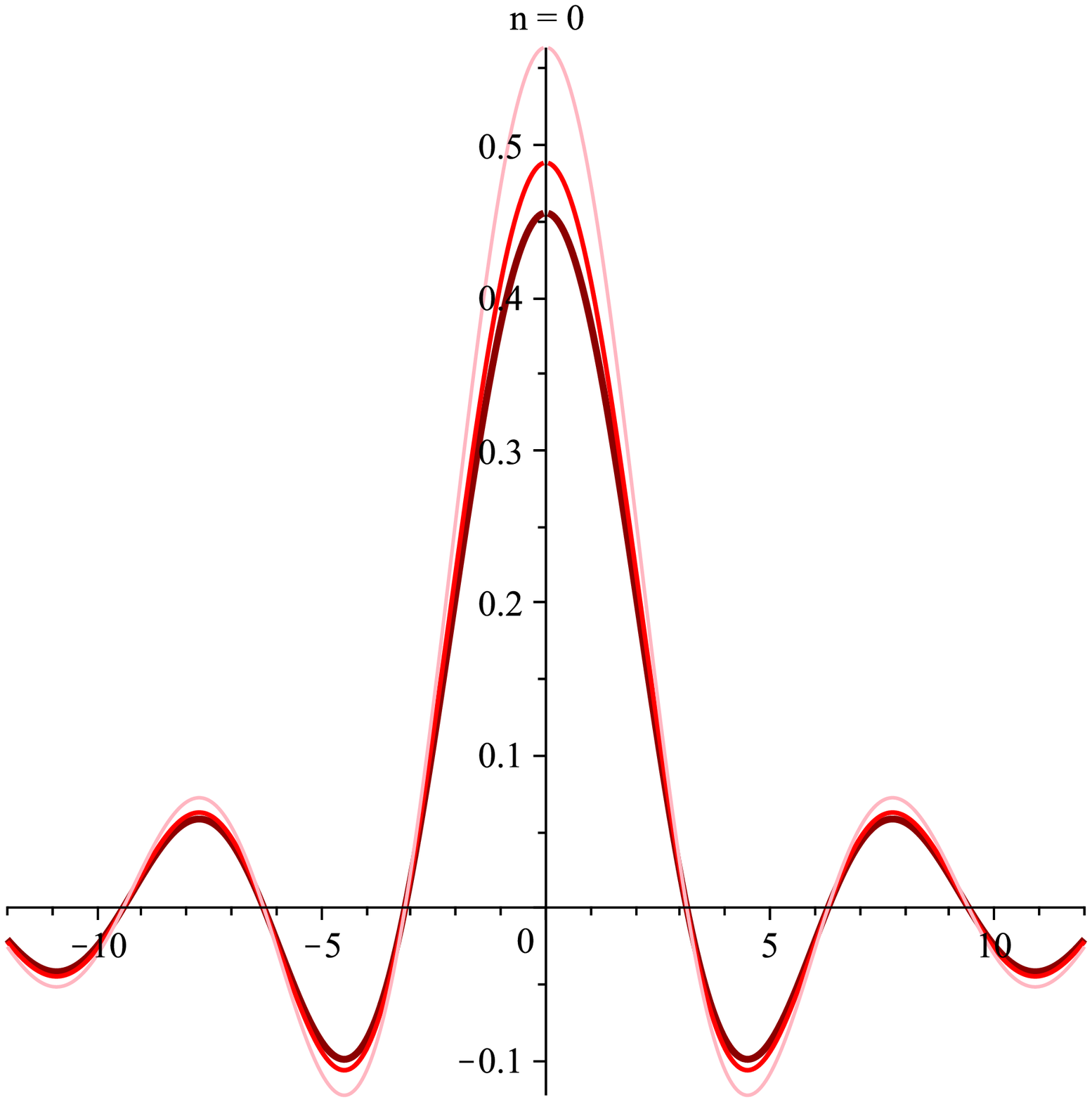}\hspace*{5pt}\includegraphics[width=120pt]{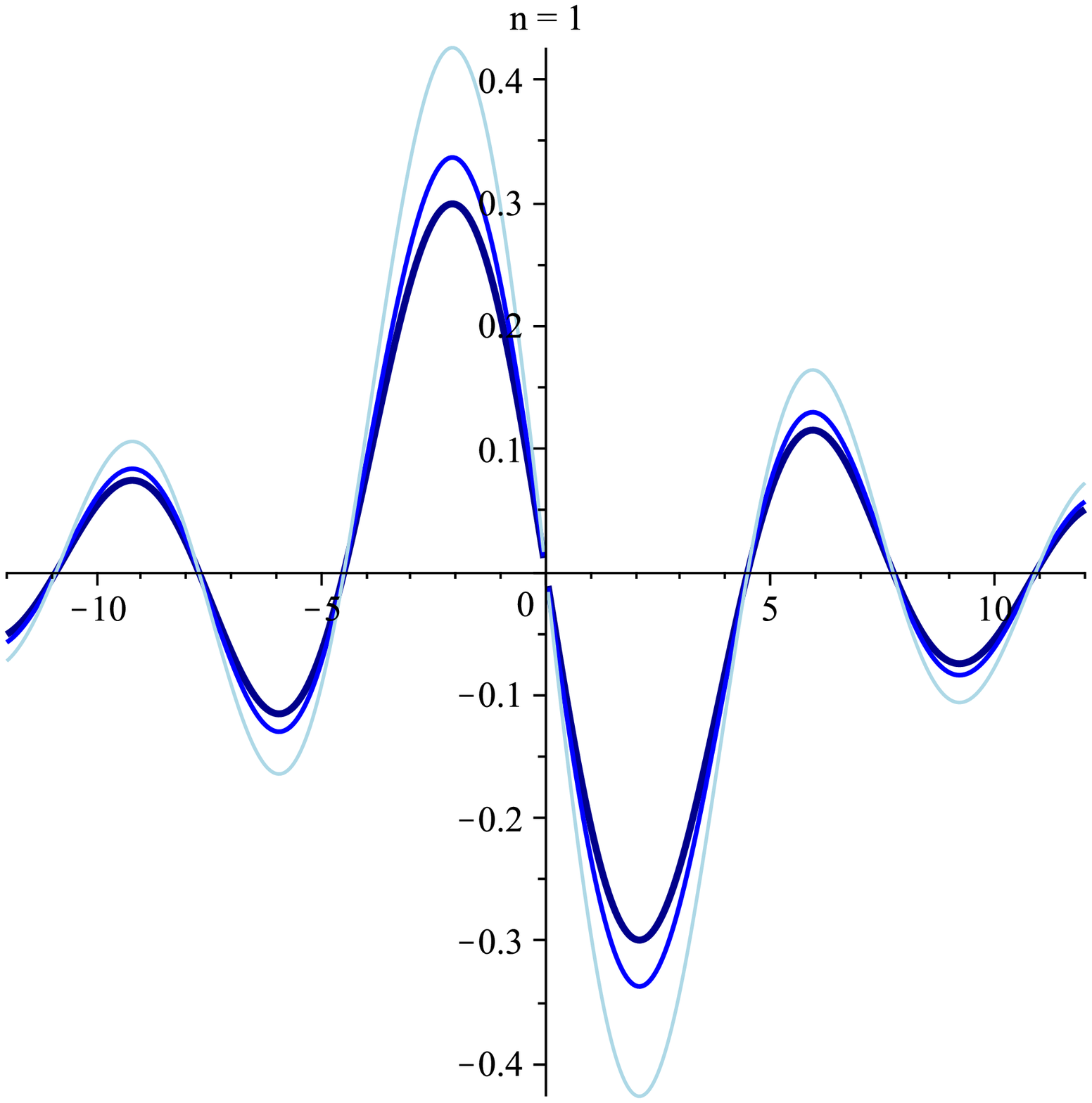}\hspace*{5pt}\includegraphics[width=120pt]{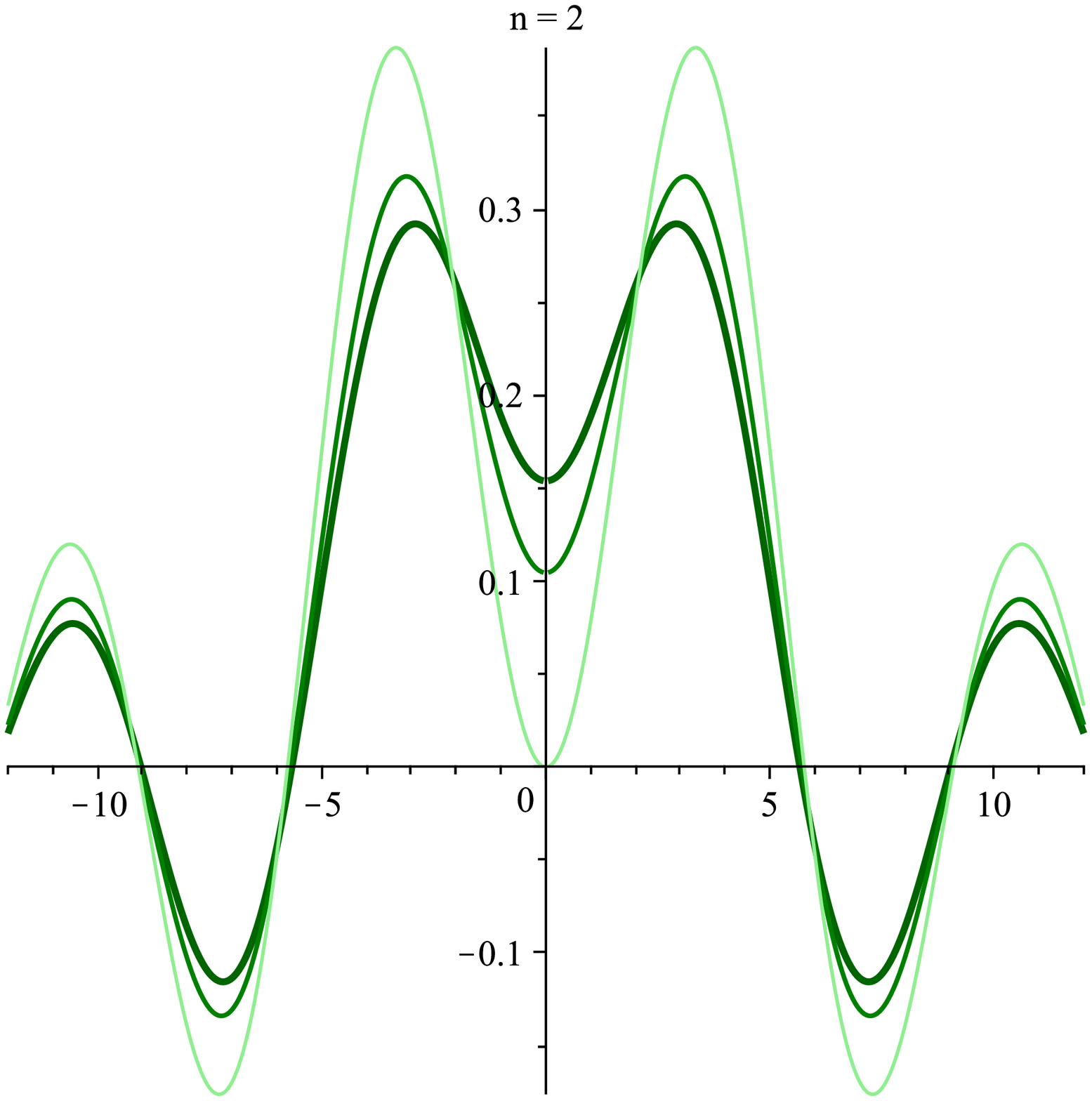}
  \includegraphics[width=120pt]{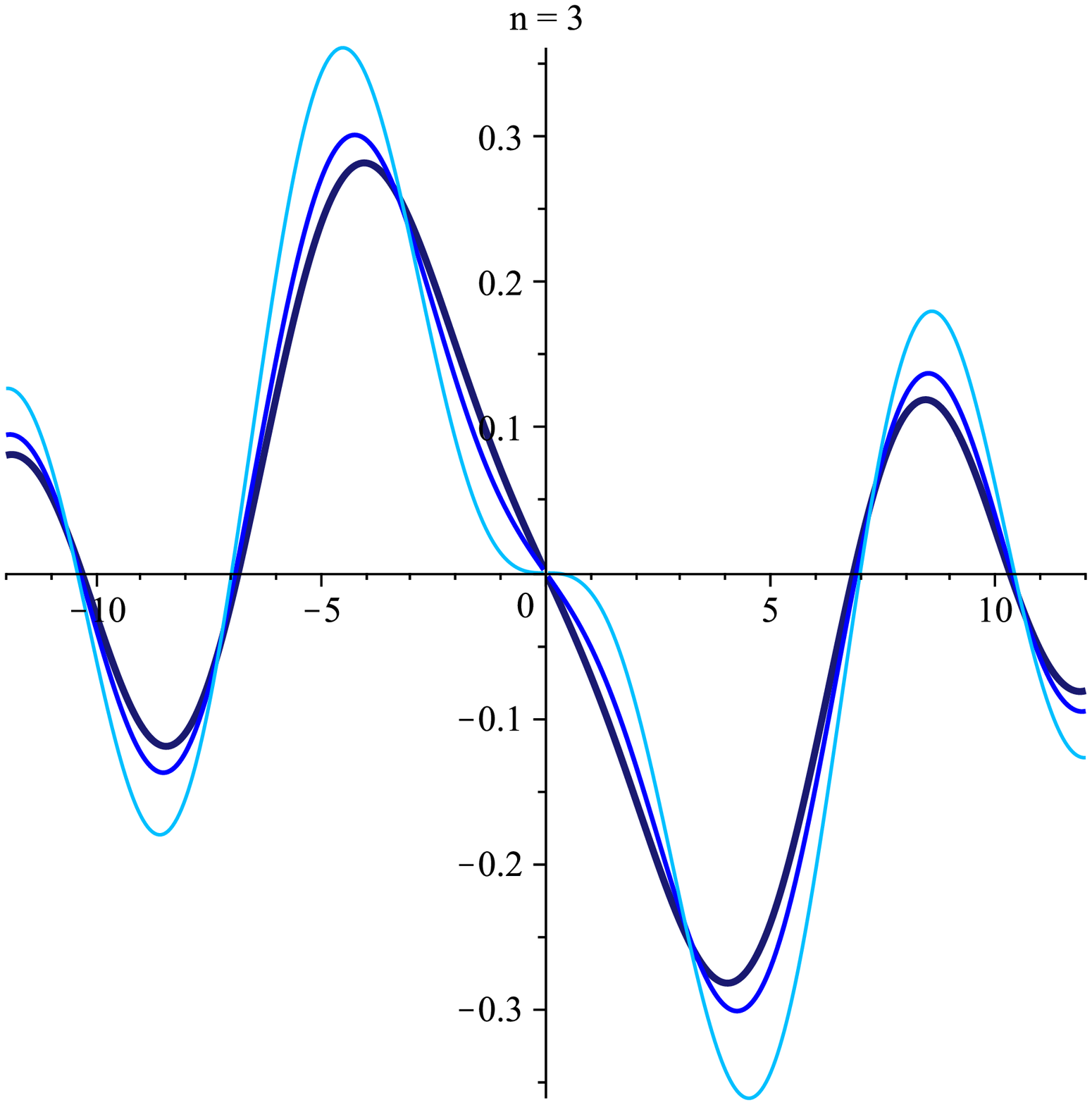}\hspace*{5pt}\includegraphics[width=120pt]{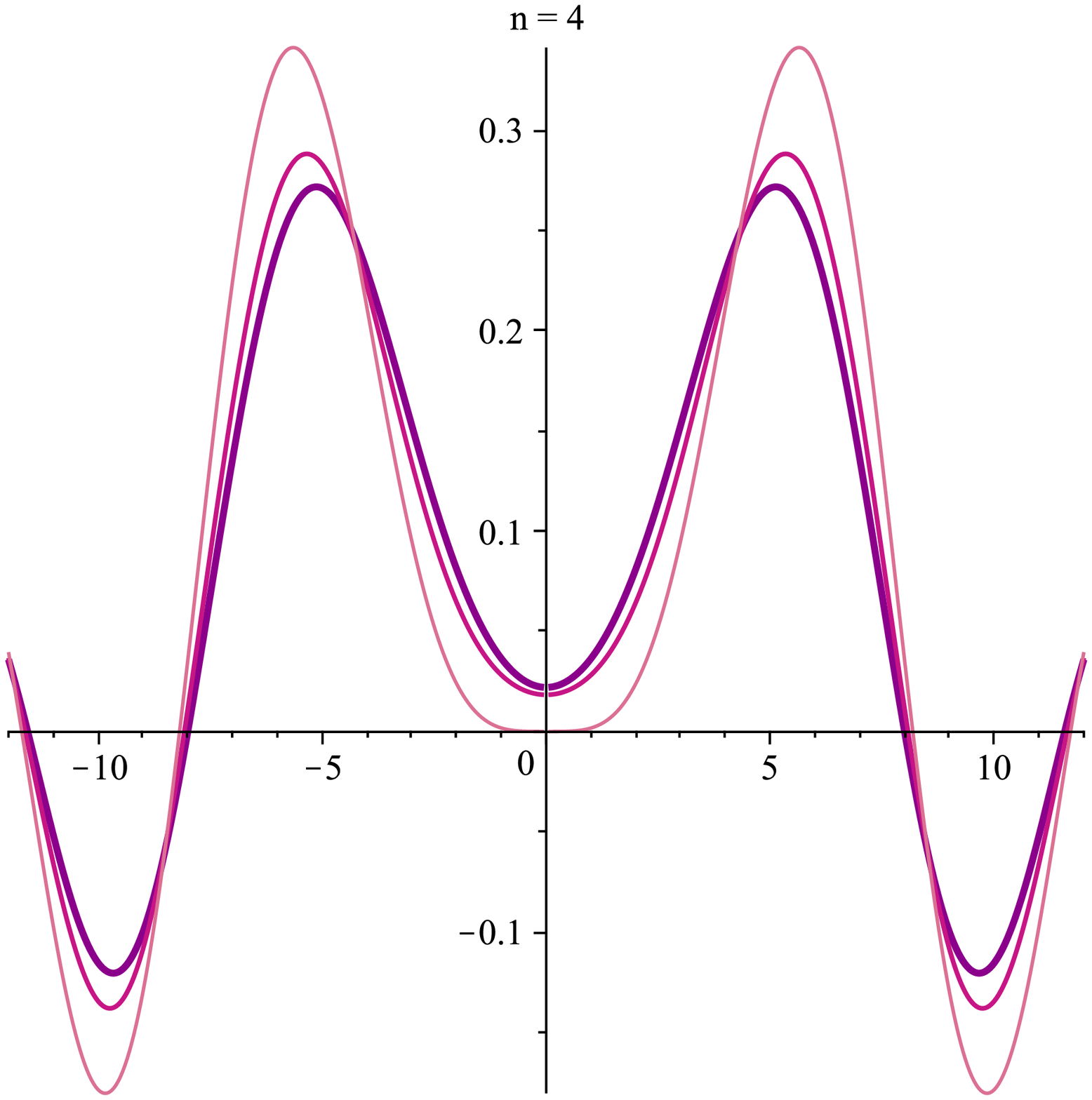}\hspace*{5pt}\includegraphics[width=120pt]{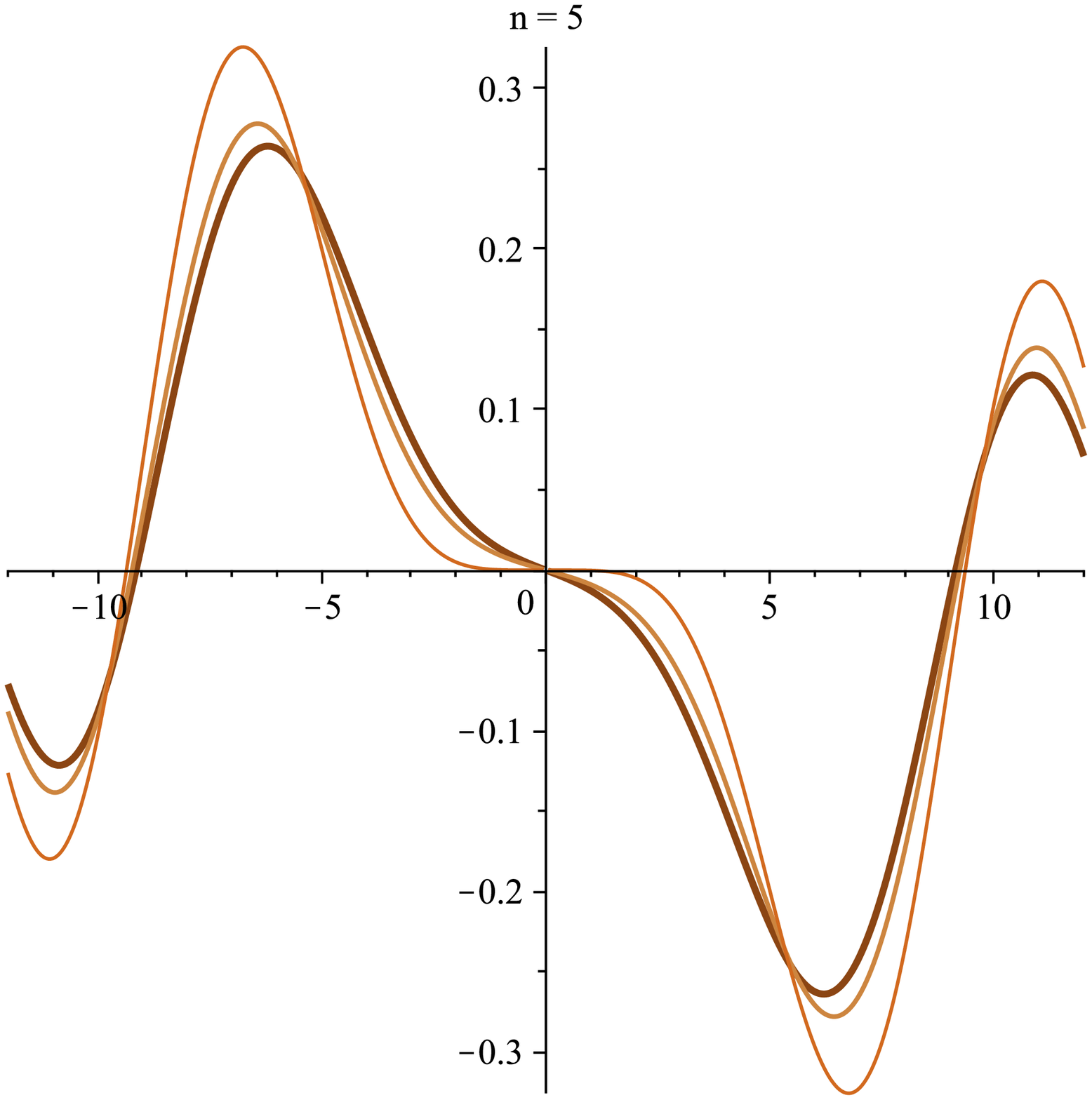}
\caption{The Sobolev--Legendre cascade of the second kind: The first six functions $\varphi_n^{[s]}$ for $s=0,1,2$: increasing $s$ corresponds to increasing line thickness and darker hue.}
\label{fig:3.4}
\end{center}
\end{figure}

Fig.~\ref{fig:3.4} displays the beginning (that is, $s=0,1,2$) of the cascade of the second kind. The obvious observation is that $\varphi^{[s]}_0$ is a scalar multiple of $\varphi^{[0]}_0$ and the same is true for $\varphi_1^{[s]}$ and $\varphi_1^{[0]}$, respectively. This follows from \R{eq:SL2nd} because  $p_0^{[s]}$ is a constant, while $p_1^{[s]}$ is a scalar multiple of $\xi$. Another obvious indication is that, as $s$ grows, $\varphi_n^{[s]}$ converges pointwise to a function $\varphi_n^{[\infty]}$ yet this might be less interesting than it appears. In particular,

\begin{lemma}
 $\varphi_n^{[\infty]}\equiv0$.
\end{lemma}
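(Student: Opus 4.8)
The plan is to reduce the statement to the large-$s$ behaviour of the orthonormal polynomials $p_n^{[s]}$ and then to extract that behaviour from a monotone limit of measures. First I would use the Parseval identity from the proof of Theorem~\ref{thm:orthogonalPhi}: since in \R{eq:SL2nd} the mollifier is $g=\chi_{(-1,1)}$, we have $\|\varphi_n^{[s]}\|_{\CC{L}_2(\BB{R})}^2=\int_{-1}^1 p_n^{[s]}(\xi)^2\,\D\xi$. As $\sum_{\ell=0}^s\xi^{2\ell}\geq1$ on $(-1,1)$, this is at most $\int_{-1}^1 p_n^{[s]}(\xi)^2\,\D\mu_s=1$, so the coefficient vectors of these degree-$n$ polynomials are bounded uniformly in $s$. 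Because the map $p\mapsto\frac{\ii^n}{\sqrt{2\pi}}\int_{-1}^1 p(\xi)\ee^{\ii x\xi}\,\D\xi$ is continuous and injective on polynomials of degree at most $n$, proving $\varphi_n^{[\infty]}\equiv0$ is equivalent to proving $p_n^{[s]}\to0$ coefficientwise; by boundedness it suffices to show that every subsequential limit is $0$.

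Next I would control $p_n^{[s]}$ via its leading coefficient $\kappa_n^{[s]}$. Writing $p_n^{[s]}=\kappa_n^{[s]}\pi_n^{[s]}$ with $\pi_n^{[s]}$ monic, and letting $\|\cdot\|_{\mu_s}$ be the norm of $\CC{L}_2(\D\mu_s)$, where $\D\mu_s=(\sum_{\ell=0}^s\xi^{2\ell})\chi_{(-1,1)}(\xi)\,\D\xi$, one has the variational identity $1/\kappa_n^{[s]}=\|\pi_n^{[s]}\|_{\mu_s}=\min\{\|\pi\|_{\mu_s}:\pi\ \text{monic},\ \deg\pi=n\}$. The measures $\mu_s$ increase monotonically to $\D\mu_\infty=(1-\xi^2)^{-1}\chi_{(-1,1)}(\xi)\,\D\xi$, so each form $\|\pi\|_{\mu_s}^2$ increases in $s$ and hence so do the minima, converging to $\min_\pi\|\pi\|_{\mu_\infty}$. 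The plan is to prove this limit is $+\infty$, which forces $\kappa_n^{[s]}\to0$, and hence $p_n^{[s]}=\kappa_n^{[s]}\pi_n^{[s]}\to0$ provided the monic polynomials $\pi_n^{[s]}$ stay bounded (which holds trivially for $n=0,1$, where by symmetry $\pi_0^{[s]}=1$ and $\pi_1^{[s]}=\xi$).

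The hard part is precisely this divergence, and it turns on the endpoint behaviour of $\mu_\infty$, since $\|\pi\|_{\mu_\infty}^2=\int_{-1}^1\pi(\xi)^2/(1-\xi^2)\,\D\xi$ is finite if and only if $\pi(1)=\pi(-1)=0$. For $n=0$ (a constant) and $n=1$ (a monic linear polynomial, which can vanish at only one endpoint) no admissible monic polynomial removes both singularities, the limiting minimum is $+\infty$, and the lemma follows at once — in agreement with the earlier remark that $\varphi_0^{[s]}$ and $\varphi_1^{[s]}$ are fixed functions times a vanishing scalar. I expect the real difficulty to surface for $n\geq2$, where the monic polynomial $(\xi^2-1)\xi^{\,n-2}$ already makes the integral finite: the limiting minimum is then finite, $\kappa_n^{[s]}$ does not tend to $0$, and the same machinery instead identifies a nonzero limit $p_n^{[\infty]}=\pi_n^{[\infty]}/\|\pi_n^{[\infty]}\|_{\mu_\infty}$, the orthonormal polynomial for the endpoint-singular weight $(1-\xi^2)^{-1}$. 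Verifying that the limiting minimum is genuinely infinite for every $n$ is thus the crux on which the argument depends, and I would want to settle the cases $n\geq2$ carefully before regarding the lemma as proved in full generality.
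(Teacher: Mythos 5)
Your setup is correct, and it is worth comparing it with what the paper actually does: the paper's entire proof is the $n=0$ instance of your variational argument --- it computes $p_0^{[s]}\equiv u_s^{-1/2}$ with $u_s=m_0^{(s)}\to\infty$, concludes $\varphi_0^{[s]}\to0$, and then declares that ``the lemma follows''. Nothing in the paper addresses $n\geq1$; your treatment adds the equally easy case $n=1$, where the relevant minimum is $m_2^{(s)}\to\infty$. The crux you flag for $n\geq2$ is therefore not a gap you failed to close: it is the point at which the lemma itself fails. Your own observation already shows which way it resolves --- for $n\geq2$ the monic polynomial $(\xi^2-1)\xi^{n-2}$ has finite $\mu_\infty$-norm, so the increasing minima stay bounded, $\kappa_n^{[s]}\not\to0$, and the limit is a nonzero function. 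The statement $\varphi_n^{[\infty]}\equiv0$ is true only for $n=0,1$ and false for every $n\geq2$.

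To make this concrete, and independent of any compactness argument, take $n=2$. With $m_{2k}^{(s)}=\int_{-1}^1\xi^{2k}\,\D\mu_s(\xi)=\sum_{\ell=0}^s\frac{2}{2k+2\ell+1}$, symmetry gives
\begin{displaymath}
  p_2^{[s]}(\xi)=\frac{\xi^2-m_2^{(s)}/m_0^{(s)}}{\left(m_4^{(s)}-(m_2^{(s)})^2/m_0^{(s)}\right)^{1/2}}.
\end{displaymath}
Telescoping yields $m_0^{(s)}-m_2^{(s)}\to2$ and $m_2^{(s)}-m_4^{(s)}\to\frac23$ while $m_0^{(s)}\to\infty$, hence $m_2^{(s)}/m_0^{(s)}\to1$ and
\begin{displaymath}
  m_4^{(s)}-\frac{(m_2^{(s)})^2}{m_0^{(s)}}=-\left(m_2^{(s)}-m_4^{(s)}\right)+\frac{m_2^{(s)}}{m_0^{(s)}}\left(m_0^{(s)}-m_2^{(s)}\right)\longrightarrow-\frac23+2=\frac43,
\end{displaymath}
so $p_2^{[s]}\to\frac{\sqrt3}{2}(\xi^2-1)$ uniformly on $[-1,1]$ --- exactly the orthonormal polynomial you predicted for the endpoint-singular limiting weight. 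Consequently, by \R{eq:SL2nd},
\begin{displaymath}
  \varphi_2^{[s]}(x)\longrightarrow\frac{\sqrt{3}}{2\sqrt{2\pi}}\int_{-1}^1(1-\xi^2)\ee^{\ii x\xi}\D\xi=\sqrt{\frac{6}{\pi}}\,\frac{\sin x-x\cos x}{x^3},
\end{displaymath}
uniformly in $x$, and the right-hand side equals $\frac13\sqrt{6/\pi}\neq0$ at $x=0$. The same computation for general $n\geq2$ produces the nonzero limits $(1-\xi^2)$ times the orthonormal polynomials for the weight $1-\xi^2$. So you should not attempt to settle the $n\geq2$ cases in the lemma's favour: record your argument as a proof for $n=0,1$ together with a disproof for $n\geq2$. (This also undercuts the paper's subsequent remark that the convergence visible in Fig.~\ref{fig:3.4} ``need not take place'': for $n\geq2$ it genuinely does, to the nonzero limits above.)
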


\begin{proof}
 Let 
 \begin{displaymath}
  u_s=\int_{-1}^1 \sum_{\ell=0}^s \xi^{2\ell}\D\xi,\qquad s\in\BB{Z}_+.
 \end{displaymath}
  Then $p_0^{[0]}\equiv 1/\sqrt{u_2}$ and, by \R{eq:SL2nd}, 
  \begin{displaymath}
    \varphi_0^{[s]}(x)=\sqrt{\frac{2}{\pi u_s}} \frac{\sin x}{x}.
  \end{displaymath}
  But 
  \begin{displaymath}
    u_s=\sum_{\ell=0}^s \frac{1}{\ell+\frac12}\stackrel{\ell\rightarrow\infty}{\longrightarrow}\infty
  \end{displaymath}
  and the lemma follows. 
\end{proof}

Alternatively, $\lim_{s\rightarrow\infty} w^{[s]}(\xi)=(1-\xi^2)^{-1}\chi_{(-1,1)}\not\in\CC{L}_2(\BB{R})$. An obvious remedy, which we do not pursue in this paper, is to consider the weight $w_s(\xi)=\sum_{\ell=0}^s \sigma^\ell\xi^{2\ell}$ for some $\sigma\in(0,1)$.

\subsection{The Sobolev-ultraspherical cascade of the second kind}

We construct a cascade of the second kind based on the ultraspherical weight $w^{[0]}(\xi)=(1-\xi^2)\chi_{(-1,1)}(\xi)$. Therefore
\begin{displaymath}
  w^{[s]}(\xi)=w^{[0]}(\xi)\sum_{\ell=0}^s \xi^{2\ell}=(1-\xi^{2s+2})\chi_{(-1,1)},\qquad s\in\BB{Z}_+
\end{displaymath}
and, as $s\rightarrow\infty$,  $w^{[s]}$ converges weakly to the Legendre weight.

\begin{figure}[htb]
\begin{center}
  \includegraphics[width=120pt]{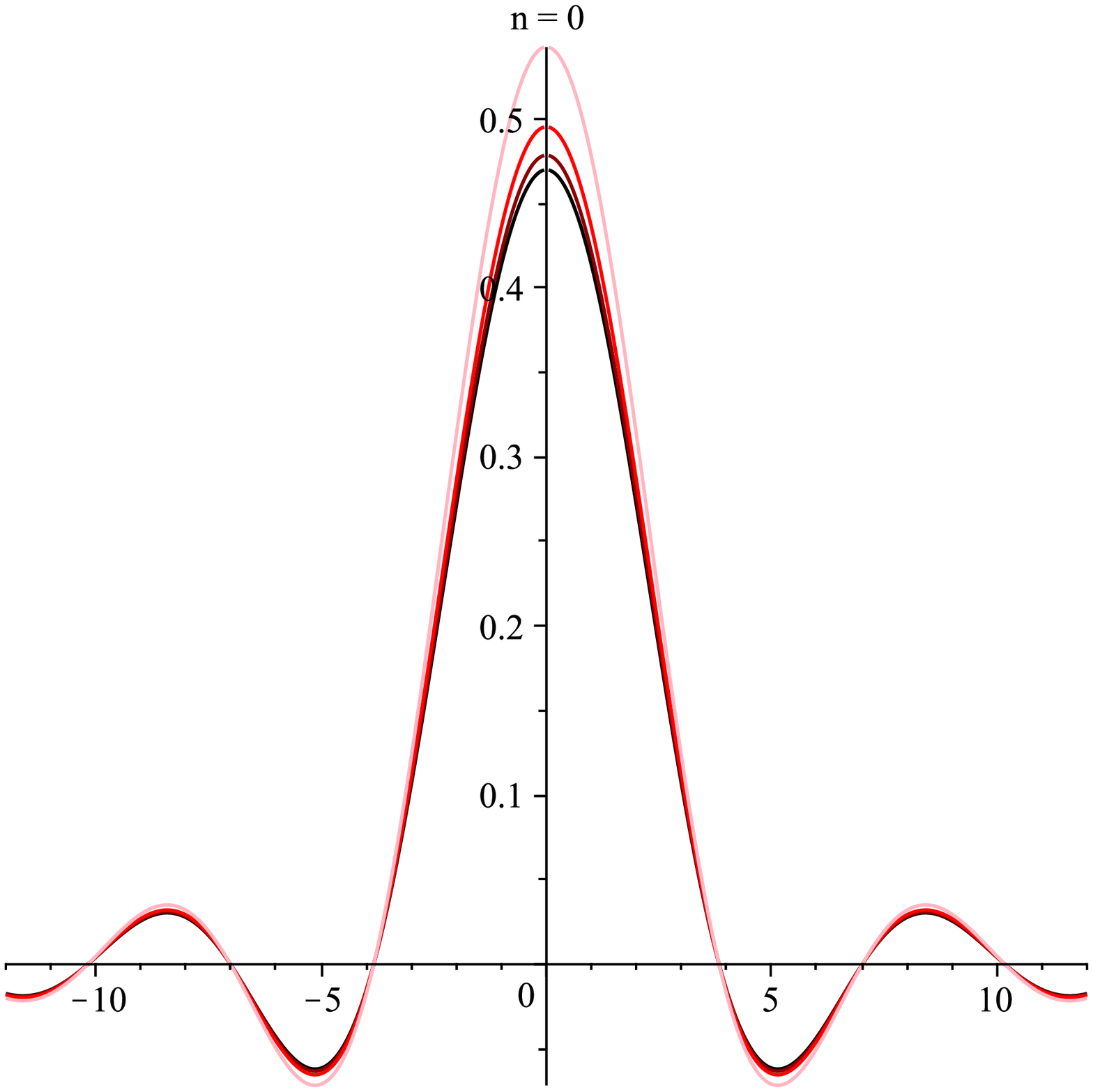}\hspace*{5pt}\includegraphics[width=120pt]{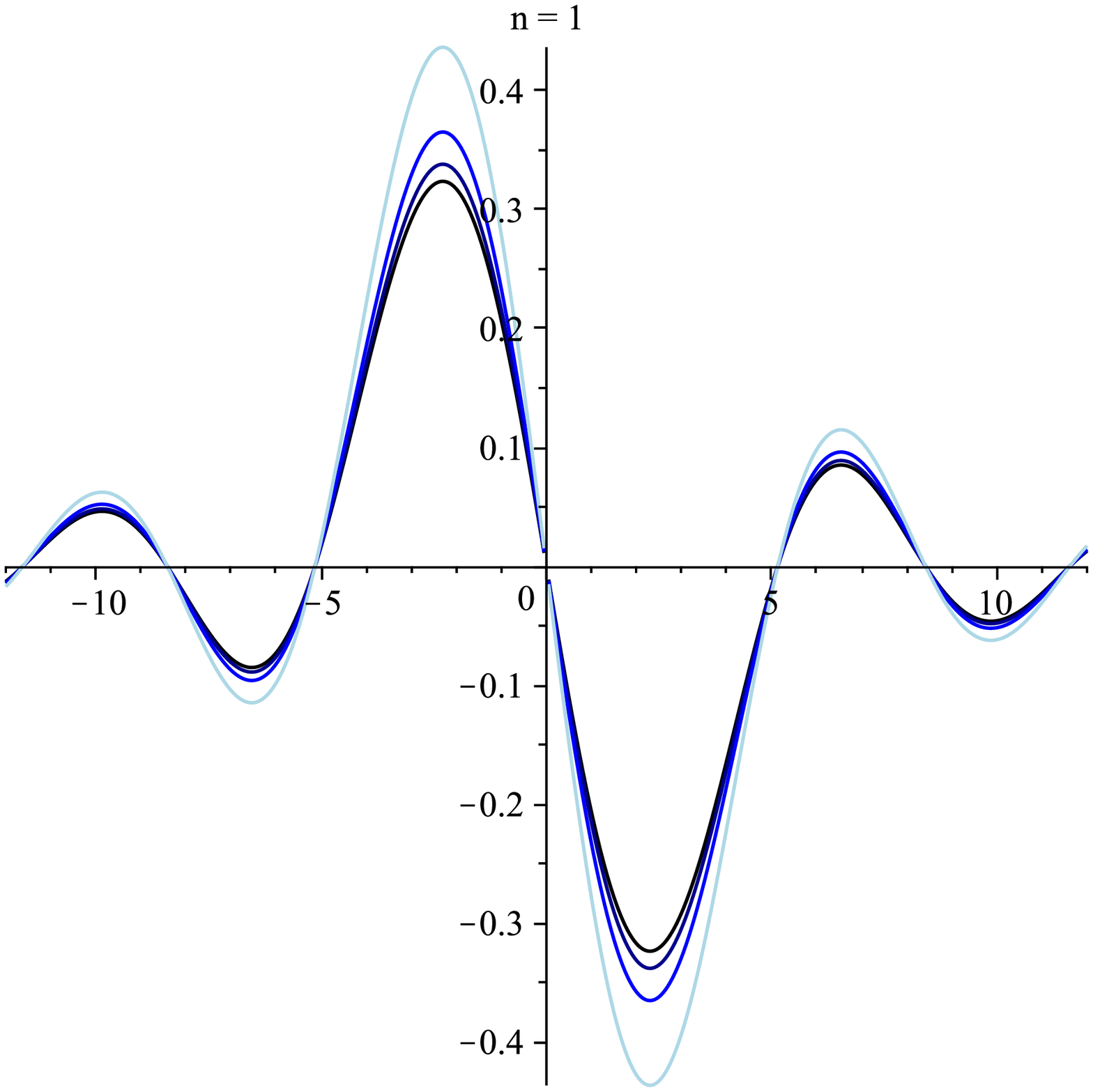}\hspace*{5pt}\includegraphics[width=120pt]{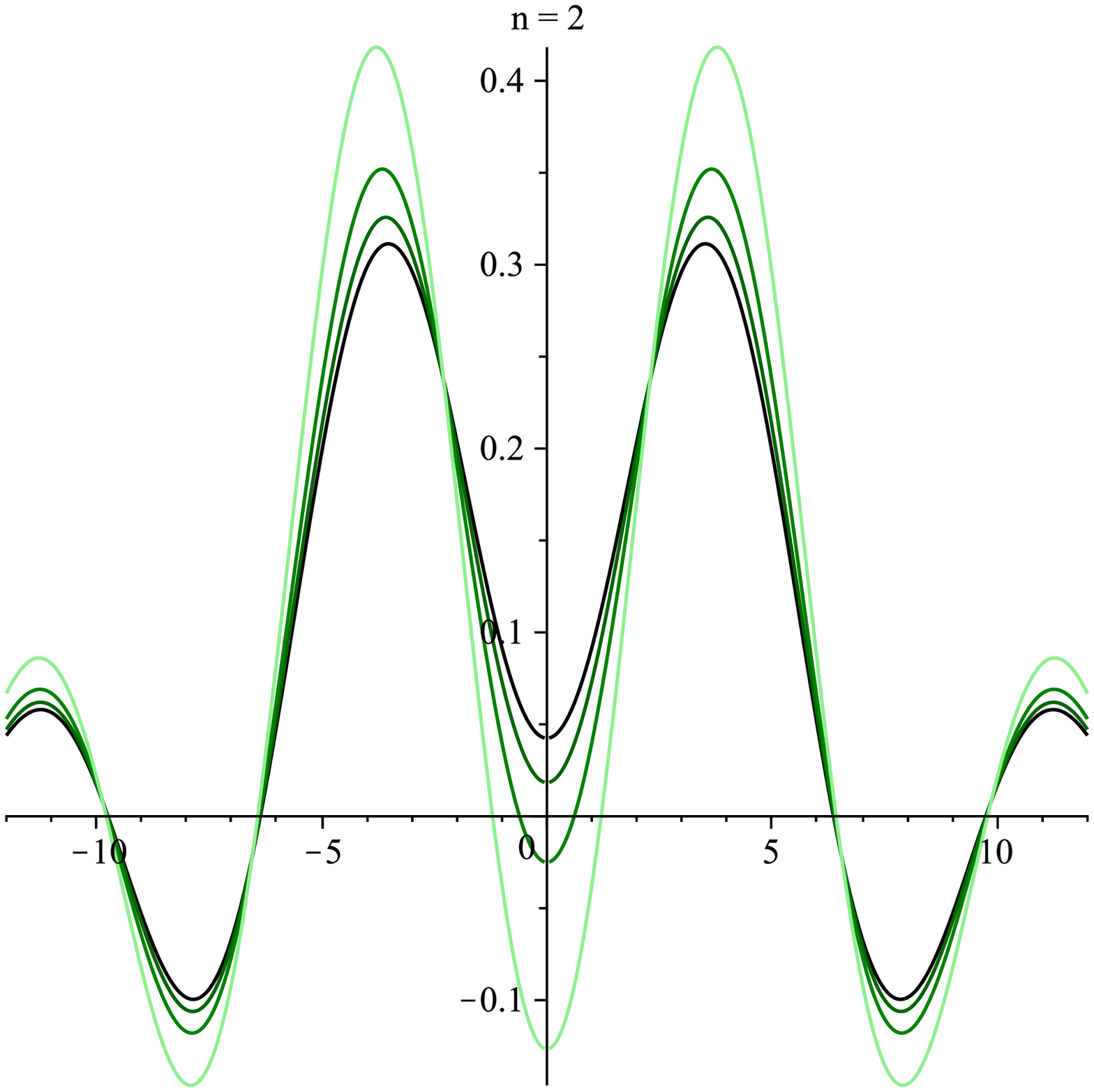}
  \includegraphics[width=120pt]{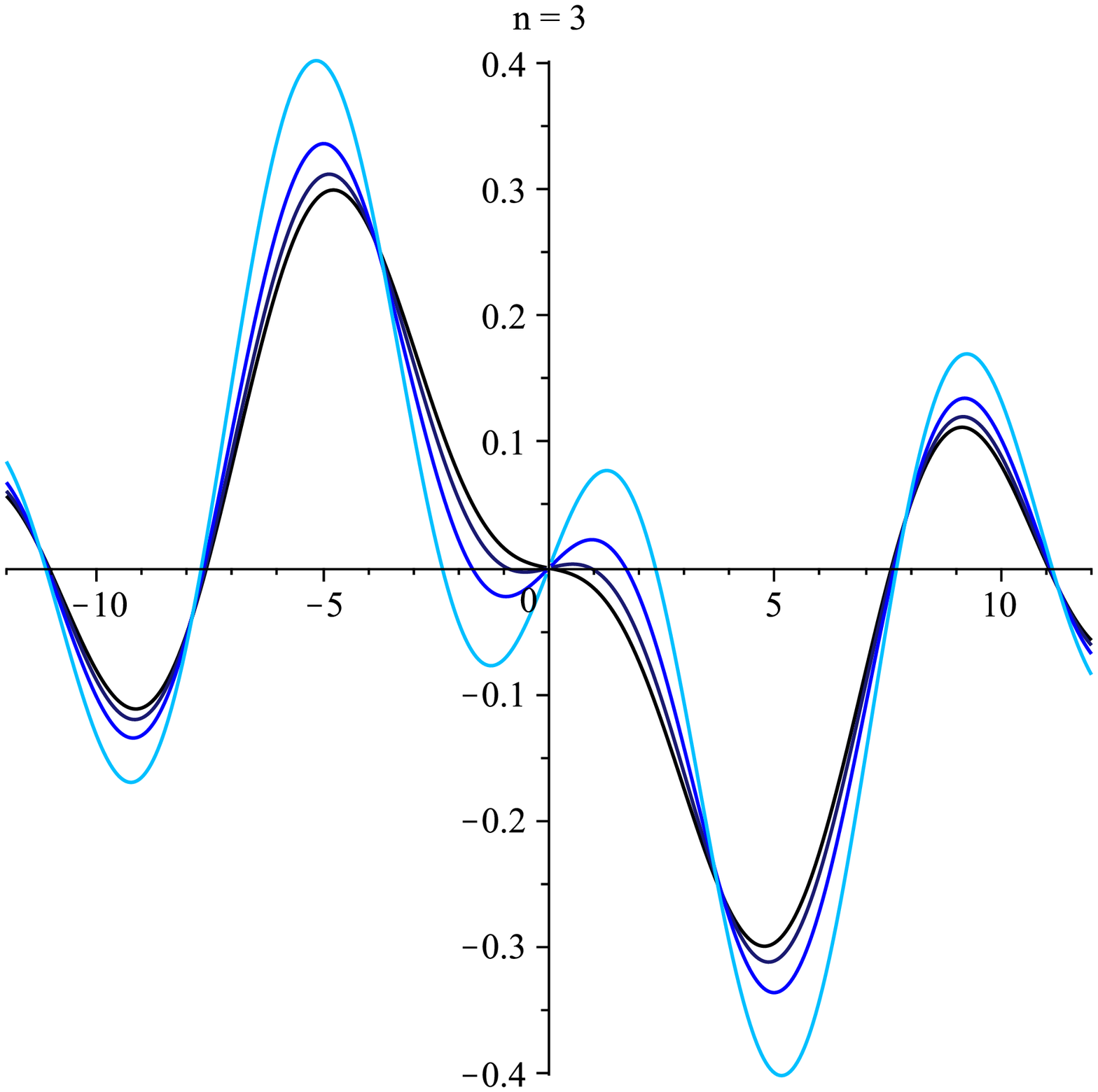}\hspace*{5pt}\includegraphics[width=120pt]{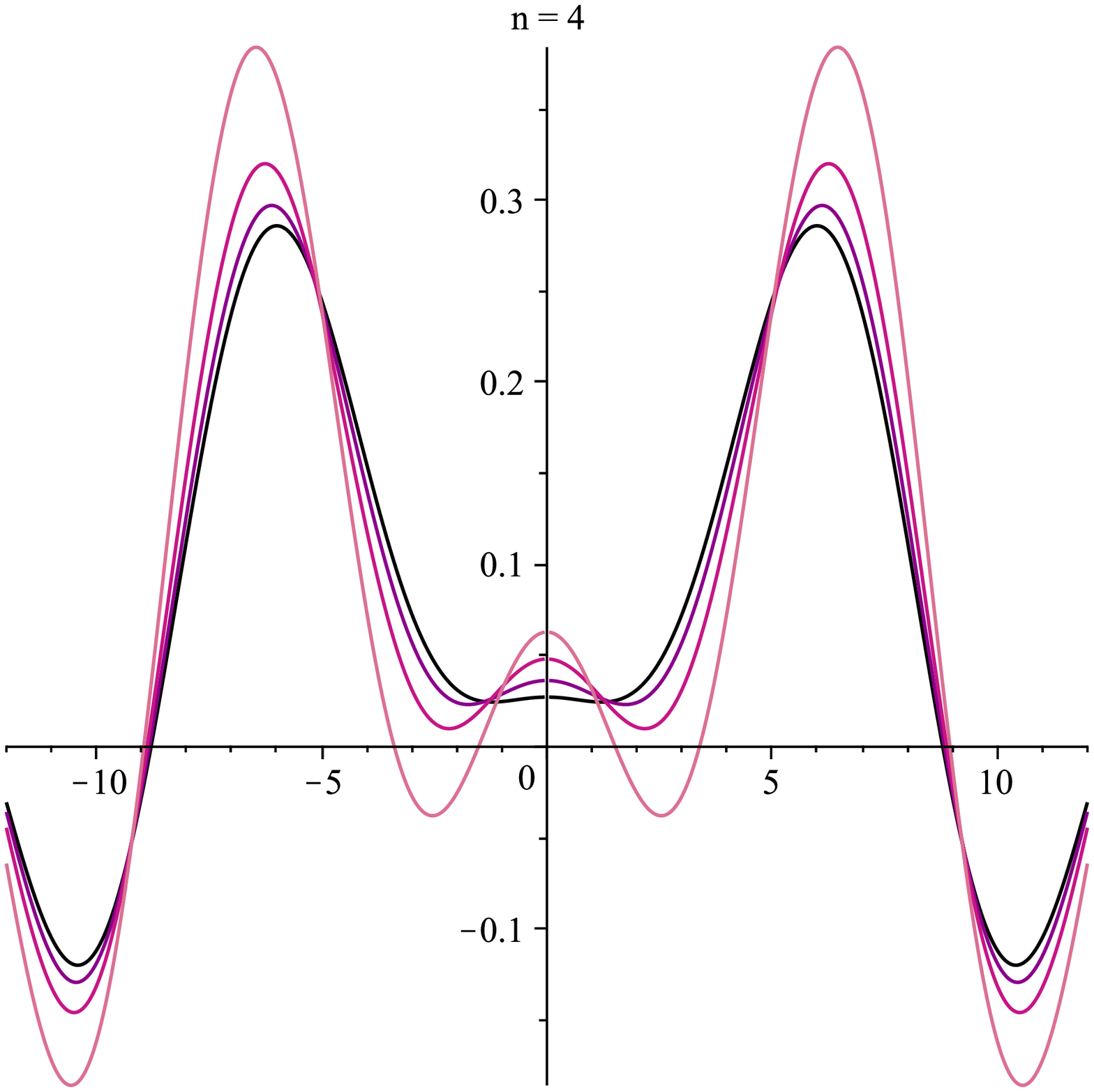}\hspace*{5pt}\includegraphics[width=120pt]{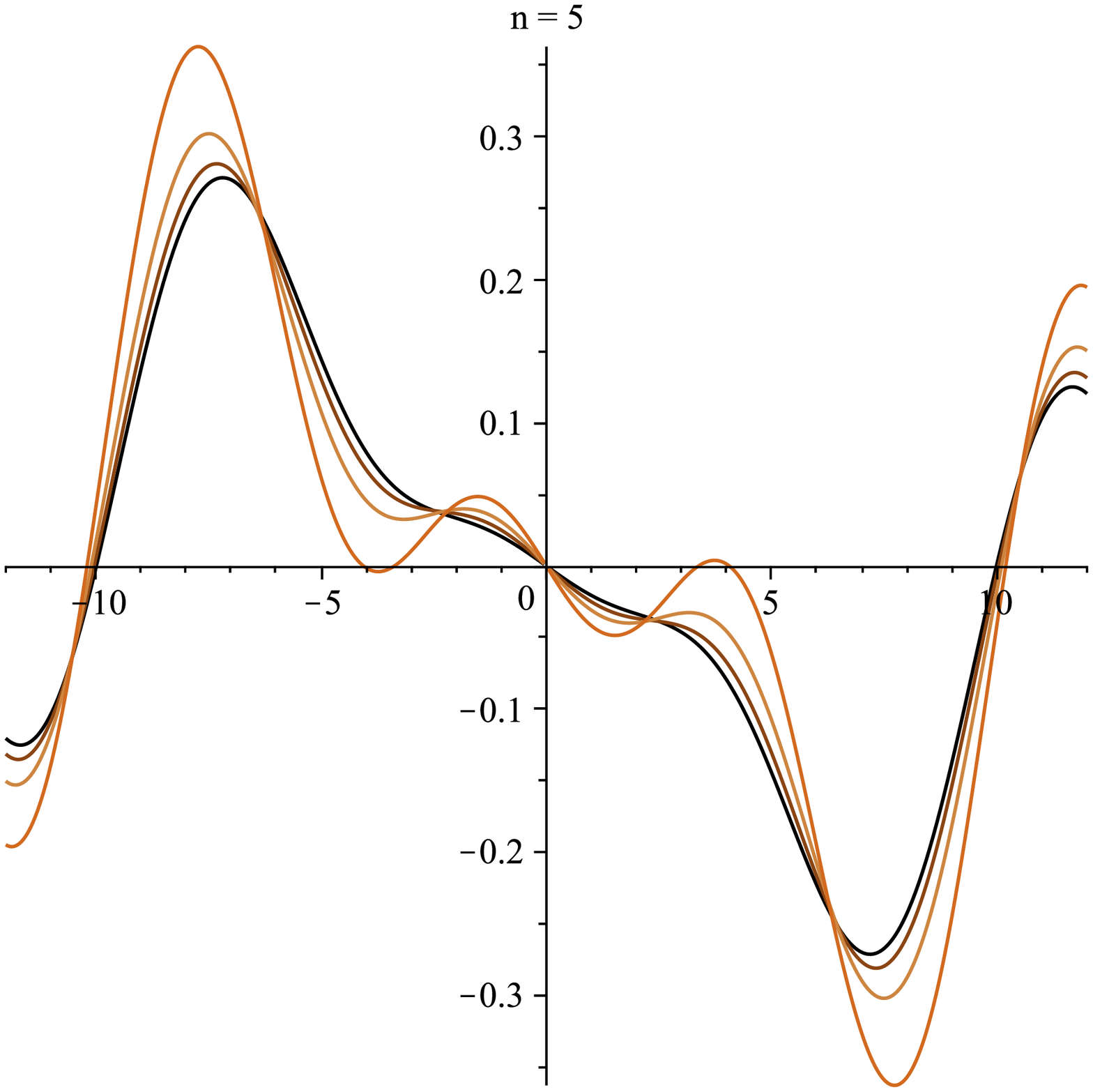}
\caption{The Sobolev--ultrasperical cascade: The first six functions $\varphi_n^{[s]}$ for $s=0,1,2,3$: increasing $s$ corresponds to darker hue.}
\label{fig:3.5}
\end{center}
\end{figure}

In Fig.~\ref{fig:3.5} we display the functions $\varphi_n^{[0]},\ldots,\varphi_n^{[3]}$ for $n=0,1,\ldots,5$. While the convergence to a limit in each figure is quite persuasive, we must beware of `proof by picture': convergence is equally pictorially persuasive in Fig.~\ref{fig:3.4} where, as we have already seen, it need not take place. 

\setcounter{equation}{0}
\setcounter{figure}{0}
\section{Non-symmetric measures}

%\subsection{Extending `symmetric' theory}
%
%The case of non-symmetric measures has been already considered in \cite{iserles20for} in the context of $\CC{L}_2$ orthogonality: it follows similar mathematical machinery to the symmetric theory of \cite{iserles19oss}. Thus, let $\D\mu=w\D x$ be a measure on $\BB{R}$, which need not be symmetric, and $\{p_n\}_{n\in\bb{Z}_+}$ the underlying orthonormal polynomials with the three-term recurrence relation
%\begin{displaymath}
%  b_n p_{n+1}(\xi)=(\xi-a_n)p_n(\xi)-b_{n-1}p_{n-1}(\xi),\qquad n\in\BB{Z}_+,
%\end{displaymath}
%where $a_n,b_n\in\BB{R}$, $b_0=0$, $b_n>0$ for $n\in\BB{N}$ and we assume without loss of generality that $b_n>0$ -- this can be always achieved by scaling the $p_n$s. We again use \R{eq:DefVarphi} to define a countable sequence of functions $\{\varphi_n\}_{n\in\bb{Z}_+}$, whereby
%\begin{itemize}
%\item The functions $\varphi_n$, which in general are complex-valued, are orthonormal with respect to the usual $\CC{L}_2$ inner product in $\BB{R}$;
%\item The system $\{\varphi_n\}_{n\in\bb{Z}_+}$ has a tridiagonal, skew-Hermitian differentiation matrix,
%\begin{displaymath}
%  \varphi_n'(x)=-b_{n-1}\varphi_{n-1}+\ii a_n \varphi_n(x)+b_n\varphi_{n+1}(x),\qquad n\in\BB{Z}_+.
%\end{displaymath}
%\item The system is dense in the Paley--Wiener space $\mathcal{PW}_{\CC{supp}\,w}(\BB{R})$.
%\end{itemize}
%Non-symmetric measures allow an interesting tweak on the `symmetric' theory to recover density in $\CC{L}_2(\BB{R})$.
The most obvious non-symmetric weight function is the Laguerre weight $w(\xi)=\ee^{-\xi}\chi_{[0,\infty)}(\xi)$. In that case the $\varphi_n$s are the {\em Malmquist--Takenaka functions,\/} which have a particularly neat form,
\begin{equation}
  \label{eq:MT}
  \varphi_n(x)=\sqrt{\frac{2}{\pi}} \ii^n \frac{(1+2\ii x)^n}{(1-2\ii x)^{n+1}},\qquad n\in\BB{Z}_+,
\end{equation}
\cite{iserles20for}, and they are dense in $\mathcal{PW}_{[0,\infty)}(\BB{R})$. It is possible to extend them to a system dense in all of $\CC{L}_2(\BB{R})$ by melding them with another system, generated by the mirror image of the Laguerre weight, $\ee^{\xi}\chi_{(-\infty,0]}(\xi)$: together we obtain the same system as \R{eq:MT}, except that now $n$ ranges over all of $\BB{Z}$. 

It is, of course, perfectly possible for a system with a non-symmetric measure to be dense in $\CC{L}_2(\BB{R})$, provided that the support of $w$ is all of $\BB{R}$: an example is the Hermite-type weight  $w(\xi)=(1-\xi)^2\ee^{-\xi^2}$. 

\subsection{Shifted Hermite weight}

Letting $\rho\in\BB{R}$, we consider the weight $w(\xi)=\ee^{-(\xi-\rho)^2}$. The underlying orthonormal polynomials are $p_n(\xi)=\tilde{\CC{H}}_n(\xi-\rho)$, where $\tilde{\CC{H}}_n$ is the orthonormalised Hermite polynomial, $\tilde{\CC{H}}_n=\CC{H}_n/\sqrt{2^nn!\sqrt{\pi}}$. Therefore, seeking $\CC{H}^1(\BB{R})$ orthogonality,
\begin{Eqnarray*}
  \varphi_n^{\langle0\rangle}(x,\rho)&=&\frac{\ii^n}{\sqrt{2\pi}} \int_{-\infty}^\infty \tilde{\CC{H}}_n(\xi-\rho) \ee^{-(\xi-\rho)^2/2+\ii x\xi}\D \xi=\ee^{\ii\rho x} \varphi_n^{\langle0\rangle}(x,0),\\
  \varphi_n^{\langle1\rangle}(x,\rho)&=&\frac{\ii^n}{\sqrt{2\pi}} \int_{-\infty}^\infty \tilde{\CC{H}}_n(\xi-\rho) \frac{\ee^{-(\xi-\rho)^2/2+\ii x\xi}}{\sqrt{1+\xi^2}}\D \xi,\qquad n\in\BB{Z}_+.
\end{Eqnarray*}
It is easy to verify that
\begin{displaymath}
  \varphi_n^{\langle0\rangle}(x,\rho)=\ee^{\ii\rho x}\varphi_0^{\langle0\rangle}(x,0),\qquad x,\rho\in\BB{R}.
\end{displaymath}
Thus,  $\varphi_n^{\langle 0\rangle}=\varphi_n^{[0]}$ is merely a complex-valued rotation of the standard Hermite function. The situation is more intriguing with regard to $\varphi_n^{\langle1\rangle}$. Shifting the variable of integration,
\begin{displaymath}
  \varphi_n^{\langle 1\rangle}(x,\rho)=\frac{\ii^n \ee^{\ii\rho x}}{\sqrt{2\pi}} \int_{-\infty}^\infty \tilde{\CC{H}}_n(\xi) \sqrt{\frac{\ee^{-\xi^2}}{1+(\sigma+\xi)^2}} \ee^{\ii x\xi}\D\xi.
\end{displaymath}
On the face of it, we recover an expression similar to \R{eqn:phinformula}, except that $v(\xi)=1+(\sigma+\xi)^2$ is not an even function and does not define a Sobolev inner product. 

\begin{figure}[htb]
\begin{center}
  \includegraphics[width=120pt]{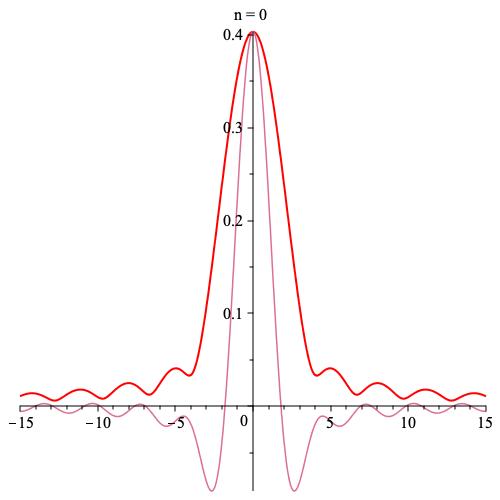}\hspace*{5pt}\includegraphics[width=120pt]{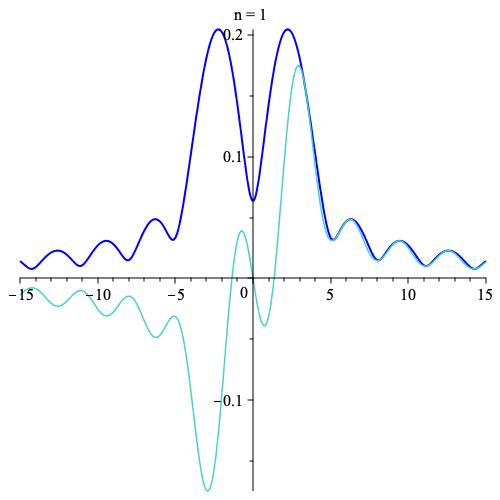}\hspace*{5pt}\includegraphics[width=120pt]{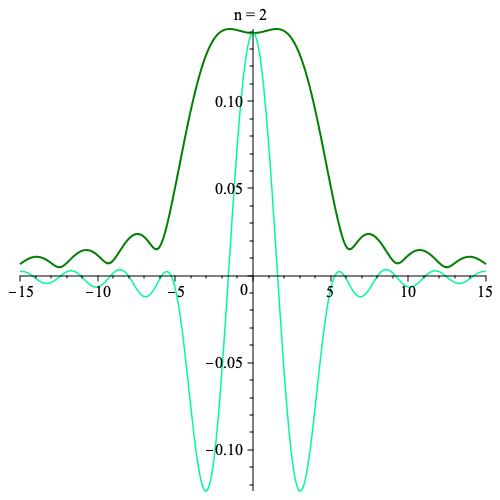}
  \includegraphics[width=120pt]{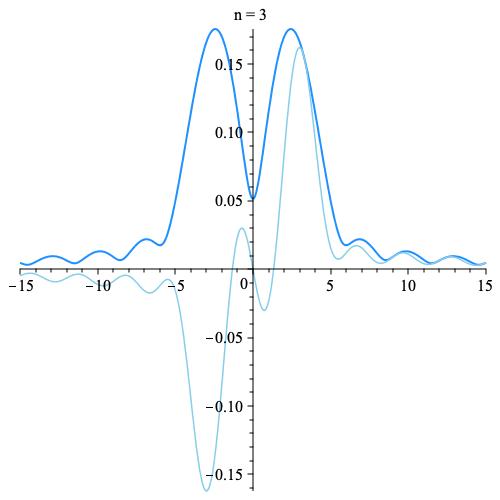}\hspace*{5pt}\includegraphics[width=120pt]{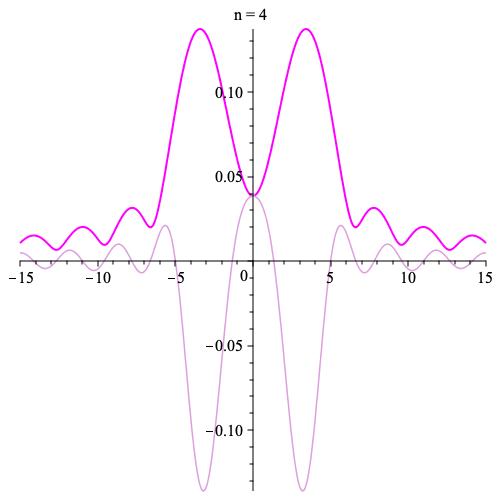}\hspace*{5pt}\includegraphics[width=120pt]{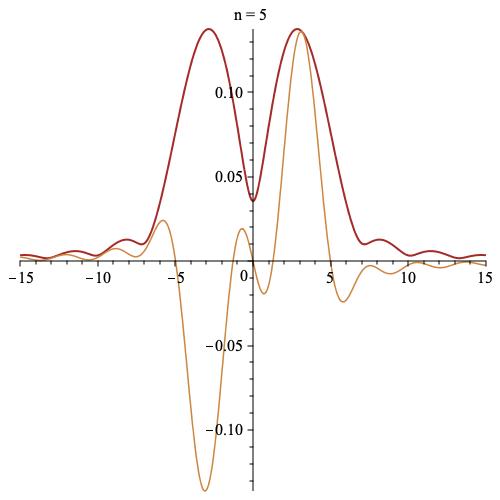}
\caption{Shifted Sobolev--Hermite functions: $|\varphi_n^{\langle 1\rangle}(x,1)|$ in thicker line and darker colour and $\Re\varphi_n^{\langle1\rangle}(x,1)$,  for $n=0,\ldots,5$.}
\label{fig:7.1}
\end{center}
\end{figure}

Fig.~\ref{fig:7.1} displays the absolute and real values of the complex-valued functions $\varphi_n^{\langle1\rangle}$. 

\subsection{The Laguerre weight}

\subsubsection{Sobolev--Laguerre functions of first kind}

Let $w(\xi)=\ee^{-\xi}\chi_{[0,\infty)}(\xi)$, a Laguerre weight. Thus, the $\varphi_n^{\langle0\rangle}$s are Malmquist--Takenaka functions \R{eq:MT}, which we need to complement with their `reflections' for $n\in-\BB{N}$ to form a system dense in $\CC{L}_2(\BB{R})$. By similar token, we need to complement $\varphi_n^{\langle1\rangle}$s, $n\in\BB{Z}_+$, with functions generated with $w(\xi)=\ee^\xi\chi_{(-\infty,0]}(\xi)$ (and indexed by $n\in-\BB{N}$) to attain completeness in $\CC{H}^1(\BB{R})$. 

It is possible to compute individual $\varphi_n^{\langle1\rangle}$s explicitly in terms of Bessel functions of the second kind (a.k.a.\ Weber functions) $\CC{Y}_n$ \cite[10.2.4]{dlmf} and  Struve functions ${\bf H}_n$ \cite[11.2.1]{dlmf}. We first let
\begin{displaymath}
  z=\frac12-\ii x,\qquad g(z)=\CC{Y}_0(z)-{\bf H}_0(z).
\end{displaymath}
The functions $\varphi_n$ can be represented explicitly as linear combinations of derivatives of the function $g$. 

\begin{lemma}
 The explicit form of the functions $\varphi_n^{\langle1\rangle}$ is
 \begin{equation}
   \label{eq:StruveWeber}
   \varphi_n^{\langle1\rangle}(x)=-\frac{\sqrt{2\pi}}{4}\ii^n \sum_{\ell=0}^n {n\choose\ell} \frac{1}{\ell!} g^{(\ell)}\!\!\left(\frac12-\ii x\right)\!,\qquad n\in\BB{Z}_+,
 \end{equation}
 while $\varphi_{-n-1}^{\langle1\rangle}(x)=(-1)^{n+1}\overline{\varphi_n^{\langle1\rangle}(x)}$, $n\in\BB{Z}_+$. 
\end{lemma}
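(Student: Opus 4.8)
The plan is to compute the $\varphi_n^{\langle1\rangle}$ directly from \R{eqn:phinformula} with the mollifier $g(\xi) = \ee^{-\xi/2}\chi_{[0,\infty)}(\xi)$ (the square root of the Laguerre weight), using the orthonormalised Laguerre polynomials for $p_n$. The key structural fact I would exploit is that, just as in the Hermite and bilateral-Laguerre cases treated earlier, each $\varphi_n^{\langle1\rangle}$ is a finite linear combination of the ``monomial moments''
\begin{displaymath}
  \lambda_m(x) = \frac{\ii^m}{\sqrt{2\pi}}\int_0^\infty \xi^m\, \ee^{-\xi/2}\,\ee^{\ii x\xi}\,\D\xi,\qquad m\in\BB{Z}_+,
\end{displaymath}
because $\CC{L}_n(\xi)$ is a polynomial of degree $n$. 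Since $\lambda_m'(x) = \lambda_{m+1}(x)$, it suffices to identify $\lambda_0$ in closed form and then realise every $\lambda_m$, and hence every $\varphi_n^{\langle1\rangle}$, as a derivative expression. The first step, therefore, is to evaluate the base integral
\begin{displaymath}
  \lambda_0(x) = \frac{1}{\sqrt{2\pi}}\int_0^\infty \ee^{-(1/2-\ii x)\xi}\,\D\xi = \frac{1}{\sqrt{2\pi}}\,\frac{1}{\tfrac12-\ii x},
\end{displaymath}
which is elementary; the substitution $z=\tfrac12-\ii x$ is then natural since $\D/\D x = \ii\,\D/\D z$.

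The crux of the argument is to connect this to the stated Struve--Weber combination $g(z)=\CC{Y}_0(z)-{\bf H}_0(z)$. I would establish that the relevant object is not the monomial moments with weight $\ee^{-\xi/2}$ but rather the moments carrying the $\CC{H}^1$ factor $(1+\xi^2)^{-1/2}$ coming from $v(\xi)=1+\xi^2$ in the first-kind construction \R{eq:first_kind}; that is, the mollifier is genuinely $g(\xi)=\sqrt{\ee^{-\xi}/(1+\xi^2)}\,\chi_{[0,\infty)}(\xi)$ rather than $\ee^{-\xi/2}$. The appearance of $\CC{Y}_0-{\bf H}_0$ is precisely explained by the classical integral representation (see \cite{dlmf}, \S10 and \S11) expressing $\CC{Y}_0(z)-{\bf H}_0(z)$ as a Laplace-type transform of $(1+t^2)^{-1/2}$, namely
\begin{displaymath}
  \CC{Y}_0(z)-{\bf H}_0(z) = -\frac{2}{\pi}\int_0^\infty \frac{\ee^{-z t}}{\sqrt{1+t^2}}\,\D t,\qquad \Re z>0.
\end{displaymath}
Thus I would match $\varphi_0^{\langle1\rangle}$ against $g(\tfrac12-\ii x)$ up to the constant $-\tfrac{\sqrt{2\pi}}{4}$, verifying the $\ell=0$, $n=0$ case of \R{eq:StruveWeber}.

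From the base case, the induction is driven by the three-term differentiation relation \R{eq:3term} together with the explicit Laguerre recurrence. The cleaner route, however, is to expand the orthonormal Laguerre polynomial $p_n$ in the binomial-type basis that produces the $\sum_\ell \binom{n}{\ell}\tfrac{1}{\ell!}g^{(\ell)}$ pattern: since differentiating $g(\tfrac12-\ii x)$ in $x$ brings down factors matching the monomial moments of $(1+\xi^2)^{-1/2}\ee^{-\xi/2}$, each derivative $g^{(\ell)}$ corresponds to one power $\xi^\ell$ under the integral. I would therefore verify that the classical Laguerre polynomials, when inserted and integrated against the $\CC{Y}_0-{\bf H}_0$ Laplace kernel, collapse to exactly the coefficients $\binom{n}{\ell}/\ell!$; this is where the specific algebraic identity for Laguerre polynomials $\CC{L}_n(\xi)=\sum_\ell \binom{n}{\ell}\tfrac{(-\xi)^\ell}{\ell!}$ does the work. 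The reflection formula $\varphi_{-n-1}^{\langle1\rangle}(x) = (-1)^{n+1}\overline{\varphi_n^{\langle1\rangle}(x)}$ follows from the construction of the mirror-image system on $(-\infty,0]$ described in the text: reflecting $\xi\mapsto-\xi$ conjugates the Fourier kernel and maps the Laguerre weight to its mirror image, so the negatively-indexed functions are conjugates of the positive ones up to a sign bookkeeping the parity.

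The main obstacle I anticipate is pinning down the exact normalisation and sign of the Struve--Weber integral representation and reconciling it with the orthonormalisation constants of the Laguerre polynomials; the special-function identities for $\CC{Y}_0$ and ${\bf H}_0$ carry several competing sign and factor-of-$\pi$ conventions in \cite{dlmf}, and getting the overall constant $-\tfrac{\sqrt{2\pi}}{4}$ correct requires careful tracking of these through the $\sqrt{2\pi}$ in \R{eqn:phinformula}. The combinatorial collapse to $\binom{n}{\ell}/\ell!$, while ultimately a known Laguerre identity, is the second delicate point and would need to be stated as a lemma or cited explicitly rather than verified by hand.
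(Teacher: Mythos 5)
Your proposal is correct and follows essentially the same route as the paper's own proof: the paper likewise evaluates the moments $\eta_\ell(x)=\frac{1}{\sqrt{2\pi}}\int_0^\infty \xi^\ell(1+\xi^2)^{-1/2}\ee^{-\xi/2+\ii x\xi}\,\D\xi$ via the Laplace-type representation of $\CC{Y}_0-{\bf H}_0$ (packaged there as a generating function in an auxiliary shift variable $T$, which is equivalent to your direct differentiation of $g$), then collapses the expansion with the identity $\CC{L}_n(\xi)=\sum_{\ell=0}^n(-1)^\ell{n\choose\ell}\xi^\ell/\ell!$ and handles negative indices by the same reflection. The only blemish is your opening paragraph's mollifier $\ee^{-\xi/2}\chi_{[0,\infty)}(\xi)$, which is the $\CC{L}_2$/Malmquist--Takenaka case rather than the $\CC{H}^1$ case, but you correct this yourself and the argument you actually run uses the right mollifier $\sqrt{\ee^{-\xi}/(1+\xi^2)}\,\chi_{[0,\infty)}(\xi)$.
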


\begin{proof}
 Letting
 \begin{displaymath}
   \eta_n(x)=\frac{1}{\sqrt{2\pi}} \int_0^\infty \xi^n \frac{\ee^{-\xi/2+\ii x\xi}}{\sqrt{1+\xi^2}}\D\xi,\qquad n\in\BB{Z}_+,
 \end{displaymath}
 we compute the generating function
 \begin{Eqnarray*}
    G(x,T)&=&\sum_{n=0}^\infty \frac{\eta_n(x)}{n!} T^n =\frac{1}{\sqrt{2\pi}} \int_0^\infty \frac{1}{\sqrt{1+\xi^2}} \exp\!\left(-\frac{\xi}{2}+T\xi+\ii x\xi\right)\!\D\xi\\
    &=&-\frac{\sqrt{2\pi}}{4}\left[\CC{Y}_0\!\left(\frac12-\ii x-T\right)-{\bf H}_0\!\left(\frac12-\ii x-T\right)\!\right]\!.
 \end{Eqnarray*}
 Therefore
 \begin{displaymath}
   \eta_n(x)=\frac{\partial^n G(x,T)}{\partial T^n}\, \rule[-8pt]{0.75pt}{22pt}_{\,T=0}=(-1)^{n+1}\frac{\sqrt{2\pi}}{4} g^{(n)}\!\!\left(\frac12-\ii x\right)\!,\qquad n\in\BB{Z}_+.
 \end{displaymath}
 
 Laguerre polynomials $\CC{L}_n$ are orthonormal and
 \begin{displaymath}
   \CC{L}_n(x)=\sum_{\ell=0}^n (-1)^\ell {n\choose\ell} \frac{x^\ell}{\ell!},\qquad n\in\BB{Z}_+
 \end{displaymath}
 \cite[18.5.12]{dlmf} and it follows from Theorem~1 that
 \begin{displaymath}
   \varphi_n^{\langle1\rangle}(x)=\frac{\ii^n}{\sqrt{2\pi}} \int_0^\infty \CC{L}_n(\xi) \frac{\ee^{-\xi/2+\ii x\xi}}{\sqrt{1+\xi^2}}\D\xi=\ii^n \sum_{\ell=0}^n (-1)^\ell {n\choose \ell} \frac{\eta_\ell(x)}{\ell!},
 \end{displaymath}
 thereby proving \R{eq:StruveWeber} upon the substitution of the explicit form of $\eta_\ell$. 
 
 Extending this to $n\leq-1$ is trivial.
\end{proof}

\begin{corollary}
  The functions $\varphi_n$ for $n\in\BB{Z}_+$ have the generating function
  \begin{equation}
    \label{eq:GenFunct}
    \sum_{n=0}^\infty \frac{\varphi_n^{\langle1\rangle}(x)}{n!}t^n=-\frac{\sqrt{2\pi}}{4}\ee^{\ii t} \sum_{\ell=0}^\infty \frac{(\ii t)^\ell}{\ell!^2} g^{(\ell)}\!\!\left(\frac12-\ii x\right)\!.
  \end{equation}
\end{corollary}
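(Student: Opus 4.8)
The plan is to obtain the generating function by substituting the closed form from the preceding Lemma and rearranging the resulting double power series in $t$. First I would insert
\[
\varphi_n^{\langle1\rangle}(x)=-\frac{\sqrt{2\pi}}{4}\ii^n \sum_{\ell=0}^n {n\choose\ell} \frac{1}{\ell!} g^{(\ell)}\!\!\left(\frac12-\ii x\right)
\]
into $\sum_{n=0}^\infty \varphi_n^{\langle1\rangle}(x)\,t^n/n!$, extract the constant $-\sqrt{2\pi}/4$, and combine $\ii^n t^n=(\ii t)^n$, so that the left-hand side becomes $-\tfrac{\sqrt{2\pi}}{4}\sum_{n=0}^\infty \frac{(\ii t)^n}{n!}\sum_{\ell=0}^n {n\choose\ell}\frac{1}{\ell!}g^{(\ell)}(\tfrac12-\ii x)$.

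Next I would use the identity ${n\choose\ell}/n!=1/(\ell!\,(n-\ell)!)$ and interchange the order of the two summations, letting $\ell$ range over $\BB{Z}_+$ and, for each fixed $\ell$, letting $n$ run from $\ell$ upwards. The substitution $m=n-\ell$ then decouples the indices: the factor $g^{(\ell)}(\tfrac12-\ii x)\,(\ii t)^\ell/\ell!^2$ is pulled to the front, while the inner sum $\sum_{m=0}^\infty (\ii t)^m/m!$ collapses to $\ee^{\ii t}$. Reassembling the constant yields exactly the asserted right-hand side, so the identity follows once the rearrangement is licensed.

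The only delicate point is the justification of the interchange of summation. The cleanest route is to read the claim as an identity of formal power series in $t$, in which case the rearrangement is an exact manipulation of coefficients and no convergence is required. For a genuine analytic identity I would instead establish absolute convergence of the double series on a disc in $t$ by bounding the derivatives $g^{(\ell)}(\tfrac12-\ii x)$; since $g=\CC{Y}_0-{\bf H}_0$ is analytic in a neighbourhood of $\tfrac12-\ii x$, its Taylor coefficients decay geometrically and Fubini for series applies. I expect this convergence bookkeeping to be the main, albeit modest, obstacle, the algebraic rearrangement itself being entirely routine.
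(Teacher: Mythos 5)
Your proposal is correct and is exactly the argument the paper intends: the paper dispatches the corollary with the remark that ``the proof is elementary, using \R{eq:StruveWeber}'', i.e.\ substituting the Lemma's closed form, combining $\ii^n t^n$ with the factor $\binom{n}{\ell}/n! = 1/(\ell!\,(n-\ell)!)$, and exchanging the order of summation so the inner sum collapses to $\ee^{\ii t}$, precisely as you do. Your additional care about justifying the interchange (formal power series, or Fubini via Cauchy estimates on $g^{(\ell)}(\tfrac12-\ii x)$, which is legitimate since $\tfrac12-\ii x$ stays away from the singularity of $\CC{Y}_0$ at the origin) is sound and goes slightly beyond what the paper records.
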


The proof is elementary, using \R{eq:StruveWeber}. Moreover, \R{eq:GenFunct} can be easily extended to 
\begin{displaymath}
  \sum_{n=-\infty}^\infty\!\frac{\varphi_n^{\langle1\rangle}(x)}{|n|!} \zeta^n=\frac{\sqrt{2\pi}}{4}\!\left[ \ee^{-\ii \zeta^{-1}}\!\sum_{\ell=0}^\infty \frac{(-\ii\zeta^{-1})^\ell}{\ell!^2} g^{(\ell)}\!\left(\frac12\!+\!\ii x\!\right)-\ee^{\ii\zeta} \sum_{\ell=0}^\infty \frac{(\ii\zeta)^\ell}{\ell!^2} g^{(\ell)}\!\left(\frac12\!-\!\ii x\!\right)\!\right]\!,
\end{displaymath}
which makes sense for $|\zeta|=1$.

Since 
\begin{Eqnarray*}
  z^2\CC{Y}_n''(z)+z\CC{Y}_n'(z)+(z^2-n^2)\CC{Y}_n(z)&=&0,\\
  z^2{\bf H}_n''(z)+z{\bf H}_n'(z)+(z^2-n^2){\bf H}_n(z)&=&\frac{z^{n+1}}{2^{n-1}\sqrt{\pi}\CC{\Gamma}(n+\frac12)}
\end{Eqnarray*}
\cite[11.10.5 \& 11.2.7]{dlmf}, it follows that $g$ obeys 
\begin{displaymath}
  z g''(z)+g'(z)+zg(z)=-\frac{2}{\sqrt{\pi}\CC{\Gamma}(\frac12)}=-\frac{2}{\pi}
\end{displaymath}
and we can express $g^{(\ell)}$ as a linear combination of $g$ and $g'$ with rational coefficients. We do not pursue further this course of action. Functions $\varphi_n^{\langle s\rangle}$ for $s\geq2$ (or even the underlying orthogonal polynomials $p_n^{\langle s\rangle}$) are no longer available in an explicit form.

\subsubsection{Sobolev--Laguerre functions of the second kind}

An alternative is to consider the Sobolev--Laguerre cascade of the second kind. While the orthogonal polynomials $p_n^{[s]}$ for the weight $w_s(\xi)=\ee^{-\xi}\chi_{[0,\infty)} \sum_{\ell=0}^s \xi_\ell^2$ are unknown for $s\in\BB{N}$, the underlying moments are trivial to compute and such polynomials can be generated at will. Also the computation of the $\varphi_n^{[s]}$s does not present a problem: for example
\begin{Eqnarray*}
  \varphi_0^{[1]}(x)&=&\sqrt{\frac{2}{3\pi}} \frac{1}{1-2\ii x},\\
  \varphi_1^{[1]}(x)&=&\sqrt{\frac{2}{87\pi}} \ii \left[-\frac{4}{1-2\ii x}+\frac{3(1+2\ii x)}{(1-2\ii x)^2}\right]\!,\\
  \varphi_2^{[1]}(x)&=&\sqrt{\frac{2}{16211\pi}}\ii^2 \left[\frac{34}{1-2\ii x}-\frac{40(1+2\ii x)}{(1-2\ii x)^2} +\frac{29(1+2\ii x)^2}{(1-2\ii x)^3}\right]\!,\\
  \varphi_3^{[1]}(x)&=&\sqrt{\frac{2}{9812127\pi}} \ii^3 \left[-\frac{480}{1-2\ii x}+\frac{762(1+2\ii x)}{(1-2\ii x)^2}-\frac{804(1+2\ii x)^2}{(1-2\ii x)^3}+\frac{559(1+2\ii x)^3}{(1-2\ii x)^4}\right]
\end{Eqnarray*}
and so on: all this seems very similar to \R{eq:MT} and for a good reason: for any $s\in\BB{Z}_+$ we can expand the relevant orthonormal polynomials in the Laguerre basis,
\begin{displaymath}
  p_n^{[s]}(x)=\sum_{j={0}}^n \gamma_{n,j}^{[s]} \CC{L}_j(x)
\end{displaymath}
(cf.\ \R{eq:ConCoeffs}), whereby it follows from \R{eq:second_kind} that
\begin{displaymath}
  \varphi_n^{[s]}(x)=\sum_{j=0}^n  \frac{\gamma_{n,j}^{[s]}}{\sqrt{2\pi}} \int_0^\infty \!\CC{L}_j(\xi) \ee^{-\xi/2+\ii x\xi}\D\xi=\sum_{j=0}^n \gamma_{n,j}^{[s]} \varphi_j^{[0]}(x).
\end{displaymath}
Note that the matrix $\{\gamma_{n,j}^{[s]}\}_{n,j\in\bb{Z}_+}$ is the inverse of the banded connection matrix ${C^{[s]}}^\top$ from Theorem \ref{thm:cascade}.  A similar construction applies also to $\varphi_n^{[s]}$ for $n\leq-1$.

The most remarkable feature of the Malmquist--Takenaka system is that the expansion coefficients $\hat{f}^{[0]}_n=\langle f,\varphi_n^{[0]}\rangle$ can be computed for $-N+1\leq n\leq N$ in $\O{N\log N}$ operations using the Fast Fourier Transform. By \R{eq:BackSubst}, however, once $\hat{\MM{f}}^{[0]}$ is known and assuming that the requisite derivatives of $f$ are available, it costs just $\O{N}$ operations to compute 
\begin{displaymath}
 f_n^{[s]}=\sum_{\ell=0}^s\int_{-\infty}^\infty  f^{(\ell)}(x)\overline{{\varphi_n^{[s]}}^{(\ell)}(x)}\D x,\qquad -N+1\leq n\leq N.
\end{displaymath}
(The derivatives of $\varphi_n^{[s]}$s can be computed similarly to the functions themselves, ${\MM{\varphi}_n^{[s]}}^{(\ell)}={C^{[s]}}^{-\top} {\MM{\varphi}^{[0]}}^{(\ell)}$.) Altogether, the cost scales as $\O{sN\log N}$.

\setcounter{equation}{0}
\setcounter{figure}{0}
\section{Conclusion}

In a sequence of previous papers \cite{iserles19oss,iserles20for,iserles21fco,Iserles21daf} the current authors have sketched different aspects of an overarching theory of $\CC{L}_2$-orthonormal systems on the real line with a tridiagonal differentiation matrix. In this paper we extend the framework to orthogonality with respect to Sobolev spaces. Unlike in the case of orthogonal polynomials, where Sobolev orthogonality is of a completely different flavour to orthogonality with respect to a Borel measure \cite{iserles91opo,marcellan91ops,marcellan15oso}, in our case we can leverage many elements of the ``$\CC{L}_2$ theory'' to a Sobolev setting: the connection to standard orthogonal polynomials via a weighed Fourier transform, density in Paley--Wiener spaces and fast computation of certain expansions. Other aspects of the theory are new, in particular the existence of two cascades, the latter of which can be ascended by banded triangular connection coefficients.

The work of this paper is a stepping stone toward the development of spectral methods on the real line that respect a wide range of invariants that can be expressed as conservation of a variable-weight Sobolev norm: a couple of examples have been given in Section~1. We expect to return to this issue in a forthcoming paper.

\section*{Acknowedgements}

The authors are grateful for very useful and enlightening correspondence with Enno Diekma, Erik Koelink and Tom Koornwinder. We gratefully acknowledge the partial support by the Simons Foundation Award No 663281 granted to the Institute of Mathematics of the Polish Academy of Sciences for the years 2021--2023. MW ackowledges support by Computational Mathematics in Quantum Mechanics, Grant of the National Science Centre (SONATA-BIS), project no. 2019/34/E/ST1/00390.

\bibliographystyle{agsm}
\bibliography{arXivVersion}

\end{document}